\newtheorem{theorem}{Theorem}[section]
\newtheorem{lemma}[theorem]{Lemma}
\newtheorem{corollary}[theorem]{Corollary}
\newtheorem{observation}[theorem]{Observation}
\newtheorem{conjecture}{Conjecture}
\newtheorem*{remark}{Remark}
\begin{document}

\title{A stability result for the union-closed size problem}

\author{Tom Eccles
 \thanks{Department of Pure Mathematics and Mathematical Statistics,
Wilberforce Road, Cambridge CB3 0WB, UK; +44 (0)1223 765921; te227@cam.ac.uk}
\thanks{This research was supported by EPSRC}
}
\date{\today}
\maketitle
\begin{abstract}
A family of sets is called union-closed if whenever $A$ and $B$ are sets of the family, so is $A\cup B$. The long-standing union-closed conjecture states that if a family of subsets of $[n]$ is union-closed, some element appears in at least half the sets of the family. A natural weakening is that the union-closed conjecture holds for large families; that is, families consisting of at least $p_02^n$ sets for some constant $p_0$. The first result in this direction appears in a recent paper of Balla, Bollob\'as and Eccles \cite{BaBoEc}, who showed that union-closed families of at least $\frac{2}{3}2^n$ sets satisfy the conjecture --- they proved this by determining the minimum possible average size of a set in a union-closed family of given size. However, the methods used in that paper cannot prove a better constant than $\frac{2}{3}$. Here, we provide a stability result for the main theorem of \cite{BaBoEc}, and as a consequence we prove the union-closed conjecture for families of at least $(\frac{2}{3}-c)2^n$ sets, for a positive constant $c$.
\end{abstract}
\section{Introduction}
We shall be concerned with finite families of finite sets; as often, we shall assume that such a family is a subset of $\mathcal P(n)=\mathcal P([n])$ for some $n$, where $\mathcal P$ denotes the powerset and $[n]=\{1,\dots,n\}$. For $\mathcal A \subseteq \mathcal P(n)$, we call $\mathcal A$ \emph{union-closed} if for any two elements $A$ and $B$ of $\mathcal A$ the set $A\cup B$ is also in $\mathcal A$. For $i\in \mathbb N$, the \emph{degree of $i$ in $\mathcal A$}, denoted $\mathrm{deg}_{\mathcal A}(i)$, is simply
\[
|\{A\in \mathcal A: i\in A\}|.
\]
The \emph{union-closed conjecture}, often attributed to Frankl \cite{Duf}, states that if $\mathcal A$ is a union-closed family other than $\{\emptyset\}$ then there is some $i$ with $\mathrm{deg}_{\mathcal A}(i)\ge |\mathcal A|/2$.

A related problem is the \emph{union-closed size problem}, which asks how small the sets of a union-closed family can be. For a finite family $\mathcal A\subseteq \mathcal P(n)$, we define the \emph{total size} of $\mathcal A$ to be
\[
 ||\mathcal A|| = \sum_{A\in \mathcal A}|A|.
\]
Then the union-closed size problem asks what is the value of
\[
 f(m) = \min ||\mathcal A||,
\]
where the minimum runs over union-closed families which consist of $m$ sets. This problem was first addressed by Reimer \cite{Rei} in 2003, who proved that
\[
 f(m) \ge \frac{m}{2} \log_2 m.
\]
Recently, Balla, Bollob\'as and Eccles \cite{BaBoEc} settled the union-closed size problem entirely, determining the exact value of $f(m)$ for all $m$. We denote by $\mathcal I(m)$ the initial segment of the colex order on $\mathbb N^{(<\infty)}$ of length $m$; this order shall be defined fully in Section \ref{sec_defs}.
\begin{theorem}\label{thm_colex_bound}
Let $m$ be a positive integer, and let $n$ be the unique integer with $2^{n-1} < m \le 2^n$. Set $m' = 2^n-m$. Then
\[
f(m) = ||\mathcal P(n)|| - ||\mathcal I(m')|| - m'.
\]
In particular, if $\mathcal A$ is a counterexample to the union-closed conjecture in $\mathcal P(n)$ with $|\mathcal A|=m$ then $f(m)<nm/2$, and so
\[
||\mathcal I(m')||+m' > n2^n/2 - nm/2 = nm'/2.
\]
\end{theorem}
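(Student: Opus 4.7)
The theorem breaks into two pieces: the exact formula for $f(m)$ and the ``In particular'' consequence. The consequence is a quick corollary of the formula: if $\mathcal A\subseteq\mathcal P(n)$ is a counterexample to the union-closed conjecture with $|\mathcal A|=m$, then every $\mathrm{deg}_{\mathcal A}(i)$ is strictly less than $m/2$, so double-counting gives $||\mathcal A||=\sum_{i=1}^n\mathrm{deg}_{\mathcal A}(i)<nm/2$. Since $f(m)\le||\mathcal A||$, substituting into the formula and using $||\mathcal P(n)||=n2^{n-1}$ rearranges to $||\mathcal I(m')||+m'>n2^n/2-nm/2=nm'/2$. The main work is therefore in the formula, which needs both an upper and a lower bound.

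The upper bound I would prove by explicit construction. Let
\[
\mathcal A^*=\mathcal P(n)\setminus\{B\cup\{n\}:B\in\mathcal I(m')\}.
\]
Because $2^{n-1}<m$ gives $m'<2^{n-1}$, the initial segment $\mathcal I(m')$ consists of subsets of $[n-1]$, so the $m'$ lifted sets $B\cup\{n\}$ are distinct from each other and from all of $\mathcal P(n-1)$; hence $|\mathcal A^*|=m$. Union-closedness of $\mathcal A^*$ reduces to the downward-closedness of $\mathcal I(m')$: if $A_1,A_2\in\mathcal A^*$ had $A_1\cup A_2=B\cup\{n\}$ for some $B\in\mathcal I(m')$, then without loss of generality $n\in A_1$, so $A_1=A_1'\cup\{n\}$ with $A_1'\subseteq B$; downward-closedness of $\mathcal I(m')$ (as an initial segment of colex on $\mathbb N^{(<\infty)}$) forces $A_1'\in\mathcal I(m')$, i.e.\ $A_1\notin\mathcal A^*$, a contradiction. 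A direct count then gives $||\mathcal A^*||=||\mathcal P(n)||-\sum_{B\in\mathcal I(m')}(|B|+1)=||\mathcal P(n)||-||\mathcal I(m')||-m'$, establishing $f(m)\le||\mathcal P(n)||-||\mathcal I(m')||-m'$.

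The matching lower bound is the heart of the theorem, and where I expect the main obstacle. Passing to the complement $\mathcal B=\mathcal P(n)\setminus\mathcal A$ (of size $m'$), union-closedness of $\mathcal A$ translates into the ``irreducibility'' property that $B_1\cup B_2\in\mathcal B$ forces $B_1\in\mathcal B$ or $B_2\in\mathcal B$, and the goal becomes $||\mathcal B||\ge||\mathcal I(m')||+m'$. My approach would be via compressions: define shift operators $C_{ij}$ for $i<j$ that wherever possible replace $A\in\mathcal A$ containing $j$ but not $i$ by $(A\setminus\{j\})\cup\{i\}$, verify that each $C_{ij}$ preserves union-closedness and does not increase $||\mathcal A||$, and iterate until $\mathcal A$ is left-compressed. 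One then needs a Kruskal--Katona-type statement: a left-compressed complement $\mathcal B$ of size $m'$ with the irreducibility property has total size at least $||\mathcal I(m')||+m'$.

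Making the compression argument close up is the hard step. Unlike classical Kruskal--Katona, here the bound carries an extra $+m'$ term, reflecting that in the extremal family $\mathcal A^*$ every removed set shares the common element $n$. I would try to pin this down by induction on $n$: split $\mathcal A$ as $\mathcal A_0=\{A\in\mathcal A:n\notin A\}$ and $\mathcal A_1=\{A\setminus\{n\}:n\in A\in\mathcal A\}$, both union-closed on $[n-1]$; decompose $||\mathcal A||=||\mathcal A_0||+||\mathcal A_1||+|\mathcal A_1|$; and combine the containment $\{A_0\cup C:A_0\in\mathcal A_0\}\subseteq\mathcal A_1$ for any fixed $C\in\mathcal A_1$ (which in particular gives $|\mathcal A_1|\ge|\mathcal A_0|$) with the inductive bounds $||\mathcal A_i||\ge f(|\mathcal A_i|)$. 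Matching the arithmetic of such an induction precisely to the colex structure of $\mathcal I$ is where I anticipate the real difficulty.
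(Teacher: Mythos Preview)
Your treatment of the ``In particular'' clause and of the upper bound by explicit construction is correct and matches the paper (which identifies the extremal family as $\mathcal P(n)\setminus\{B\cup\{n\}:B\in\mathcal I(m')\}$).

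For the lower bound, the paper does not prove Theorem~\ref{thm_colex_bound} from scratch---it is quoted from \cite{BaBoEc}---but the argument is fully visible in the tools of Sections~\ref{sec_defs} and~\ref{sec_pf_thm}, and it is quite different from your shift-and-induct sketch. Instead of working with $\mathcal A$ and $C_{ij}$-compressions, one passes to the complement $\mathcal B=\mathcal P(n)\setminus\mathcal A$, which is \emph{simply rooted} (Observation~\ref{obs_rooted}), and applies the sequence of down-compressions $d=d_n\cdots d_1$. Two facts then finish the job: (i) $d(\mathcal B)$ is a down-set (Lemma~\ref{lem_Rei_basics}), so $||d(\mathcal B)||\le||\mathcal I(m')||$ by Kruskal--Katona (Lemma~\ref{lem_KK}); and (ii) because $\mathcal B$ is simply rooted, each $B\in\mathcal B$ loses \emph{at most one} element under the whole sequence, $|B\setminus d_{\mathcal B}(B)|\le 1$ (Lemma~\ref{lem_rooted_basics}). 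Summing gives $||\mathcal B||\le||\mathcal I(m')||+m'$ in one stroke (this is Lemma~\ref{lem_no_falls} with $m'=0$), with no induction on $n$ and no colex arithmetic to match up. The ``extra $+m'$'' that you flagged as the difficulty is exactly what the simply-rooted structure buys: it caps the total drop under compression by $|\mathcal B|$.

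Your proposed induction also has a concrete error before the arithmetic: the map $A_0\mapsto A_0\cup C$ is not injective for a general $C\in\mathcal A_1$, so it does not yield $|\mathcal A_1|\ge|\mathcal A_0|$. For instance, $\mathcal A=\mathcal P([2])\cup\{[3]\}\subseteq\mathcal P(3)$ is union-closed with $m=5>2^2$, yet $|\mathcal A_0|=4$ and $|\mathcal A_1|=1$. One can repair an inductive approach, but the down-compression route avoids the issue entirely.
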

\noindent
The extremal family $\mathcal A$ for the first part of the theorem has $\mathcal P(n)\setminus \mathcal A = \{B\cup \{n\}: B\in \mathcal I(m')\}$. Through bounding $||\mathcal I(m')||$, this result is sufficient to prove the union-closed conjecture if $|\mathcal A|$ is large --- in fact the following bound is given in \cite{BaBoEc}.
\begin{corollary}\label{cor_old_bound}
The union-closed conjecture holds for all union-closed families $\mathcal A\subseteq \mathcal P(n)$ with $|\mathcal A| \ge \frac{2}{3}2^n$.
\end{corollary}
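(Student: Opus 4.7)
The plan is to reduce the corollary to a purely combinatorial inequality about the sizes of colex initial segments, and then to prove that inequality by induction. From Theorem \ref{thm_colex_bound}, a counterexample $\mathcal{A}\subseteq\mathcal{P}(n)$ with $|\mathcal{A}|=m$ would force $\|\mathcal{I}(m')\|+m' > nm'/2$ with $m'=2^n-m$, equivalently $\|\mathcal{I}(m')\|>\left(\tfrac{n}{2}-1\right)m'$. The hypothesis $|\mathcal{A}|\ge\frac{2}{3}2^n$ is exactly $m'\le 2^n/3$, so it suffices to prove that $\|\mathcal{I}(m')\|\le\left(\tfrac{n}{2}-1\right)m'$ whenever $m'\le 2^n/3$.

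The engine of the argument is the self-similar structure of colex: for $m'>2^{k-1}$ the first $m'$ sets decompose as $\mathcal{I}(m')=\mathcal{P}(k-1)\cup\{A\cup\{k\}:A\in\mathcal{I}(m'-2^{k-1})\}$, yielding the recursion
\[
\|\mathcal{I}(m')\|=(k-1)2^{k-2}+(m'-2^{k-1})+\|\mathcal{I}(m'-2^{k-1})\|.
\]
As a warm-up I would first prove $\|\mathcal{I}(m')\|\le (k/2)m'$ whenever $m'\le 2^k$, by induction on $k$; here the recursion turns the inductive step into the trivial arithmetic inequality $m'-2^{k-1}\le 2^{k-1}$. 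Equality holds at $m'=2^k$, since $\mathcal{I}(2^k)=\mathcal{P}(k)$ has average set-size exactly $k/2$.

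For the main bound I would then induct on $n$ in steps of two. If $m'\le 2^{n-2}$ the warm-up at $k=n-2$ already gives $\|\mathcal{I}(m')\|\le\tfrac{n-2}{2}m'=\left(\tfrac{n}{2}-1\right)m'$. If instead $2^{n-2}<m'\le 2^n/3$, write $m''=m'-2^{n-2}$; the arithmetic $m'\le 2^n/3$ rearranges precisely to $m''\le 2^{n-2}/3$, which is exactly the hypothesis needed to apply the inductive statement at level $n-2$ and conclude $\|\mathcal{I}(m'')\|\le\left(\tfrac{n}{2}-2\right)m''$. Substituting this into the recursion and collecting terms, the right-hand side collapses to $(n-2)2^{n-3}+\left(\tfrac{n}{2}-1\right)m''=\left(\tfrac{n}{2}-1\right)m'$, with equality throughout. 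The base cases $n=1,2$ are immediate.

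The main obstacle is that the second case is genuinely tight and leaves no slack: if one were to bound the residual $\|\mathcal{I}(m'')\|$ using only the warm-up lemma (at $k=n-3$, since $m''<2^{n-3}$), the result would be off by exactly $m''/2$ and the induction would fail. Applying the strictly stronger inductive statement at level $n-2$ is essential, and it is available precisely because the threshold $2^n/3$ matches the recursion $m'-2^{n-2}\le 2^{n-2}/3 \iff m'\le 2^n/3$. This tight matching also pins down the constant $\tfrac{2}{3}$: the same scheme cannot break that barrier, in line with the remark in the abstract about the methods of \cite{BaBoEc}.
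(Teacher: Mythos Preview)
Your argument is correct. The paper does not prove Corollary~\ref{cor_old_bound} itself but cites it from \cite{BaBoEc}, and the inequality you establish, that $\|\mathcal I(m')\|\le (n/2-1)m'$ whenever $m'\le 2^n/3$, is exactly the Cz\'edli--Mar\'oti--Schmidt result quoted at the start of Section~\ref{sec_pf_cor}. Your inductive proof via the colex recursion, stepping by two in $n$, is essentially the same scheme the paper uses to prove the sharper Lemma~\ref{lem_colex_total}; that lemma refines your bound by tracking the excess $m'-2^r/3$ and likewise inducts from $r-1$ and $r-2$ using the same decomposition $\|\mathcal I(m)\|=(r/2-1)2^{r-2}+k+\|\mathcal I(k)\|$ for $m=2^{r-2}+k$. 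So your route is the natural simplification of the paper's argument when only the coarser threshold bound is needed.
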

However, this is as far as one can go considering only averaging arguments --- if $m < \frac{2}{3}2^n$, then $f(m) < mn/2$. From this, one might reasonably assume that the constant $\frac{2}{3}$ in Corollary \ref{cor_old_bound} is hard to improve to any constant $\frac{2}{3}-\epsilon$. But in the extremal examples for $f(m)$, the family $\mathcal A$ is very asymmetric --- indeed, there is a single element which is in every set of $\mathcal P(n)\setminus \mathcal A$ --- and so $\mathcal A$ is in a sense far away from being a counterexample to the union-closed conjecture. In this paper, we prove a stability result for the union-closed size problem for union-closed families $\mathcal A\subseteq \mathcal P(n)$ with $|\mathcal A|\ge 2^{n-1}$. Roughly speaking, we show that if $||\mathcal A||$ is close to the maximum possible then $\mathcal P(n)\setminus \mathcal A$ has an element of high degree --- this result is Theorem \ref{thm_stability}. This enables us to extend Theorem \ref{thm_colex_bound}.
\begin{theorem}\label{thm_main}
There is a positive constant $c_1$ such that if $\mathcal A$ is a counterexample to the union-closed conjecture in $\mathcal P(n)$, and $\mathcal B= \mathcal P(n)\setminus \mathcal A$ with $|\mathcal B| = m$, then
\[
 ||\mathcal I(m)|| > m(n/2-1+c_1).
\]	
\end{theorem}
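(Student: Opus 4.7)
The plan is to argue by contradiction, using the stability theorem Theorem~\ref{thm_stability}. Suppose $\mathcal{A}$ is a counterexample to the union-closed conjecture in $\mathcal{P}(n)$ and, for contradiction, that $||\mathcal{I}(m)|| \le m(n/2 - 1 + c_1)$ for a constant $c_1 > 0$ to be chosen small. The case $|\mathcal{A}| < 2^{n-1}$ (equivalently $m > 2^{n-1}$) can be handled directly, since for such $m$ the quantity $||\mathcal{I}(m)||$ is forced to lie within $m$ of $nm/2$, so I focus on $m \le 2^{n-1}$. The counterexample hypothesis gives $\deg_{\mathcal{B}}(i) > m/2$ for every $i$, hence $||\mathcal{B}|| > nm/2$; combined with Theorem~\ref{thm_colex_bound}'s bound $||\mathcal{B}|| \le ||\mathcal{I}(m)|| + m$, our assumption forces $||\mathcal{B}||$ to lie within $c_1 m$ of the extremal value. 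Applying Theorem~\ref{thm_stability} then yields an element, which after relabeling we call $n$, with $\deg_{\mathcal{B}}(n) \ge (1-\delta)m$ where $\delta = \delta(c_1) \to 0$ as $c_1 \to 0$.

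Decompose $\mathcal{A} = \mathcal{A}_0 \sqcup \mathcal{A}_1$ by membership of $n$, and set $\mathcal{A}_1' = \{A \setminus \{n\} : A \in \mathcal{A}_1\} \subseteq \mathcal{P}([n-1])$, with $\mathcal{B}_0, \mathcal{B}_1'$ defined analogously. The stability output reads $|\mathcal{B}_0| \le \delta m$. The key structural input from union-closedness is: for any $i \in [n-1]$ with $\{i\} \in \mathcal{A}$, the map $B' \mapsto B' \cup \{i\}$ sends $\mathcal{A}_1'$ into itself, since $\{i\} \cup (B' \cup \{n\}) \in \mathcal{A}$. This gives the upset inequality $\deg_{\mathcal{A}_1'}(i) \ge |\mathcal{A}_1'|/2$ for all such $i$.

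Combining this with $\deg_{\mathcal{A}_0}(i) = 2^{n-2} - \deg_{\mathcal{B}_0}(i)$ and $|\mathcal{A}_1'| = 2^{n-1} - m + |\mathcal{B}_0|$, the counterexample condition $\deg_{\mathcal{A}}(i) < |\mathcal{A}|/2 = 2^{n-1} - m/2$ rearranges to $\deg_{\mathcal{B}_0}(i) > |\mathcal{B}_0|/2$ for every $i \in [n-1]$ with $\{i\} \notin \mathcal{B}_0$. Since at most $|\mathcal{B}_0|$ singletons can lie in $\mathcal{B}_0$, this holds on a set of size at least $n - 1 - |\mathcal{B}_0|$, so summing gives
\[
||\mathcal{B}_0|| \;>\; \tfrac{1}{2}(n - 1 - |\mathcal{B}_0|)\,|\mathcal{B}_0|.
\]

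The contradiction comes from applying Theorem~\ref{thm_colex_bound} a second time, to the union-closed family $\mathcal{A}_0 \subseteq \mathcal{P}(n-1)$, which gives $||\mathcal{B}_0|| \le ||\mathcal{I}(|\mathcal{B}_0|)|| + |\mathcal{B}_0|$; the right-hand side is of order $|\mathcal{B}_0|\log_2 |\mathcal{B}_0|$, which is much smaller than the lower bound of order $n|\mathcal{B}_0|/2$ from the previous display, once $|\mathcal{B}_0|$ is suitably small. The main obstacle is parameter control: stability only yields $|\mathcal{B}_0| \le \delta m$, which can be as large as $\delta \cdot 2^{n-1}$, and the delicate regime is when $|\mathcal{B}_0|$ is comparable to $n$ or larger, where the per-coordinate inequality gives only weak information. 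Closing the argument requires either choosing $c_1$ (hence $\delta$) small enough that $|\mathcal{B}_0|$ is forced into the range $|\mathcal{B}_0| \ll 2^{n-3}$ where the two bounds clash, or iterating stability to obtain a sharper absolute bound on $|\mathcal{B}_0|$ from the near-extremality of $\mathcal{A}$; this tuning is the only nontrivial step of the proof.
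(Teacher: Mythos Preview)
Your proposed route does not close, and the step you flag as ``the only nontrivial step'' is in fact where the argument fails. After applying Theorem~\ref{thm_stability} you obtain $|\mathcal B_0|\le \delta m$ for a fixed constant $\delta=\delta(c_1)>0$, and then derive
\[
\frac{(n-1-|\mathcal B_0|)\,|\mathcal B_0|}{2}\;<\;||\mathcal B_0||\;\le\;||\mathcal I(|\mathcal B_0|)||+|\mathcal B_0|.
\]
The right-hand side is at most $|\mathcal B_0|\bigl(\lceil\log_2|\mathcal B_0|\rceil+1\bigr)$, so the two bounds clash only when $|\mathcal B_0|+2\log_2|\mathcal B_0|<n-3$, in particular only when $|\mathcal B_0|$ is at most about $n$. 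But since $m>2^n/3$ by Corollary~\ref{cor_old_bound}, the stability bound $|\mathcal B_0|\le\delta m$ allows $|\mathcal B_0|$ to be of order $\delta\cdot 2^n$, exponentially larger than $n$ for any fixed $\delta>0$. No choice of $c_1$ (and hence $\delta$) independent of $n$ forces $|\mathcal B_0|$ into the required range, and ``iterating stability'' does not help: each iteration again only shrinks the residual family by a constant factor. Your lower bound on $||\mathcal B_0||$ is simply vacuous in the relevant regime.

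The paper avoids this by never trying to control $|\mathcal B_0|$ in absolute terms. Instead, Lemma~\ref{lem_few_with_root} shows directly that if $|\mathcal B_{\{i\}}|\ge 3pm$ for some $i$ then $||\mathcal I(m)||>m(n/2-1+p)$. The proof exploits the fact that $\{B-i:[\{i\},B]\subseteq\mathcal B\}$ is a down-set inside $\mathcal B_n^+$, applies Theorem~\ref{thm_down-set} to bound $||\mathcal B_n^+||$, and combines with Lemma~\ref{lem_colex_sums}; this yields an inequality in which the size of the rooted family enters \emph{relatively} (compared to $m$), not absolutely. One then sets $||\mathcal I(m)||=m(n/2-1+p)$, uses the contrapositive of Lemma~\ref{lem_few_with_root} to bound every $|\mathcal B_{\{i\}}|\le 3pm$, feeds this into Theorem~\ref{thm_stability}, and together with $||\mathcal B||>mn/2$ and $m>2^n/3$ obtains $36p+9p^2>1$, hence $p>1/37$. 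The whole argument is a single pass with no case split on $m$; the role you hoped the $\mathcal B_0$-analysis would play is taken by Lemma~\ref{lem_few_with_root}.
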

\noindent
Using simple bounds on $||\mathcal I(m)||$, this extends slightly the range where we can prove the union-closed conjecture.
\begin{corollary}\label{cor_main}
There is a positive constant $c_2$ such that the union-closed conjecture holds for all union-closed familes $\mathcal A\subseteq \mathcal P(n)$ with $|\mathcal A| \ge 2^n(2/3 - c_2)$.
\end{corollary}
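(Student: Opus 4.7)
The plan is to apply Theorem~\ref{thm_main} to a hypothetical counterexample $\mathcal{A}\subseteq\mathcal{P}(n)$ with $|\mathcal{A}|\ge(2/3-c_2)2^n$, and derive a contradiction by bounding $||\mathcal{I}(m)||$ from above. Setting $m=2^n-|\mathcal{A}|$, by Corollary~\ref{cor_old_bound} I may assume $m>m_0:=\lfloor(2^n-1)/3\rfloor$, so that $r:=m-m_0\le c_2\cdot 2^n+O(1)$. Writing $D(m,n):=||\mathcal{I}(m)||-m(n/2-1)$, Theorem~\ref{thm_main} says $D(m,n)>c_1 m$; my goal is to show $D(m,n)=O(c_2)\cdot m$ (with an implicit constant independent of $n$), so that choosing $c_2$ small enough in terms of $c_1$ gives the contradiction.

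The heart of the argument will be the identity
\[D(m,n)=D(m-2^{n-2},\,n-2)\qquad(m\in[2^{n-2},2^{n-1}]),\]
which follows in a few lines from $||\mathcal{I}(m)||=(n-2)2^{n-3}+||\mathcal{I}(m-2^{n-2})||+(m-2^{n-2})$ (itself obtained by splitting $\mathcal{I}(m)$ at element $n-1$). Noting $D(m_0,n)=0$ by induction on $n$ (this is the equality case of Corollary~\ref{cor_old_bound}) and that the recursion preserves $r$ while sending $m_0$ to the analogous value for $n-2$, I iterate $k$ times to obtain $D(m,n)=D(m_k,n_k)$ with $n_k=n-2k$ and $m_k=\lfloor(2^{n_k}-1)/3\rfloor+r$. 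The iteration remains valid as long as $m_k\le 2^{n_k-1}$, i.e.\ $r\lesssim 2^{n_k}/6$. Taking $k^*$ maximal, either $r=0$ (trivially $D=0$) or $m_{k^*}\in(2^{n_{k^*}-1},2^{n_{k^*}}]$ with $2^{n_{k^*}}=O(r)$, so $m_{k^*}=O(r)$.

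A routine induction on $M$ gives the bound $||\mathcal{I}(M)||\le Mj/2$ whenever $M\le 2^j$. Applying this with $M=m_{k^*}$ and $j=n_{k^*}$ yields $D(m_{k^*},n_{k^*})\le m_{k^*}=O(r)$, so $D(m,n)=O(r)=O(c_2\cdot 2^n)$; since $m\ge 2^n/3$, dividing gives $D(m,n)/m=O(c_2)$. Choosing $c_2$ small enough in terms of $c_1$ contradicts Theorem~\ref{thm_main} and proves the corollary. (Finitely many small $n$ must be handled separately, using that the union-closed conjecture is known for $n$ below any fixed bound.)

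The main obstacle will be setting up the recursive identity and controlling its iteration. The naive approach of bounding each summand of $||\mathcal{I}(m)||-||\mathcal{I}(m_0)||=\sum_{k=m_0}^{m-1}s_2(k)$ by $s_2(k)\le n$ would produce a value of $c_2$ that depends on $n$, which is useless; the recursion sidesteps this by reducing to a problem of strictly smaller ambient dimension, where the excess $r$ is no longer tiny compared to the new ground set and a routine bound finishes the job.
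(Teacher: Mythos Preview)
Your proposal is correct and is essentially the paper's approach: both arguments bound $||\mathcal I(m)||$ from above via the recursive decomposition $||\mathcal I(m)|| = (n-2)2^{n-3} + (m-2^{n-2}) + ||\mathcal I(m-2^{n-2})||$ for $2^{n-2}\le m\le 2^{n-1}$, and combine this with Theorem~\ref{thm_main}. The paper packages the recursion as Lemma~\ref{lem_colex_total}, an inequality $||\mathcal I(m)||\le m(n/2-1)+\tfrac{3}{2}m'$ (where $m'=m-2^n/3$) proved by a three-case induction, which yields the explicit constant $c_2=2/327$ uniformly in $n$. Your version expresses the same recursion as the exact identity $D(m,n)=D(m-2^{n-2},\,n-2)$ and iterates it down to a scale where $2^{n_{k^*}}=O(r)$, then finishes with the cruder bound $||\mathcal I(M)||\le Mj/2$; this gives $D(m,n)\le m_{k^*}<6r$ rather than $\tfrac{3}{2}r$, and the $O(1)$ slack from $m_0=\lfloor(2^n-1)/3\rfloor$ forces you to treat finitely many small $n$ separately. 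That appeal to known small-$n$ cases is legitimate for an existence statement, but note that the paper's sharper lemma avoids it entirely.
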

In fact, we shall prove these theorems with bounds of $c_1\ge 1/24$ and $c_2 \ge 1/104$.

The rest of the paper is organised as follows. In Section \ref{sec_defs}, we define the concepts needed in the proofs of our main theorems --- in particular down-compressions and simply rooted families, which shall be at the heart of our argument. In Section \ref{sec_state} we state Theorem \ref{thm_stability}, our stability result for Theorem \ref{thm_colex_bound}. In Section \ref{sec_pf_thm}, we prove Theorem \ref{thm_stability}, and use it to prove Theorem \ref{thm_main}. In Section \ref{sec_pf_cor} we bound $||\mathcal I(m)||$, proving Corollary \ref{cor_main} from Theorem \ref{thm_main}. In Section \ref{sec_refine} we prove a slightly stronger form of Theorem \ref{thm_stability}, which improves the constants $c_1$ and $c_2$ a little --- this is left out of the main proof for the sake of clarity.

\section{Definitions}\label{sec_defs}
In this section, we recall some concepts used by Reimer \cite{Rei} and Balla, Bollob\'as and Eccles \cite{BaBoEc} in their work on the union-closed size problem. Central to both of those papers are \emph{compressions}. Up- and down-compressions are by now standard; see for example Bollob\'as and Leader \cite{BoLe}. For a family $\mathcal B\subseteq \mathcal P(n)$ and $i\in [n]$, we define the \emph{down-compression of $\mathcal B$ in direction $i$}, denoted $d_i(\mathcal B)$, by defining
\[
d_{(i,\mathcal B)}(B)=
\begin{cases}
B-i: i\in B,\, B-i\notin \mathcal B\\
B: \mathrm{ otherwise,}
\end{cases}
\]
and $d_i(\mathcal B) = \{d_{(i,\mathcal B)}(B): B\in \mathcal B\}$. A down-compression of a family $\mathcal B$ is equivalent to an up-compression on its complement in $\mathcal P(n)$, in that
\begin{equation}\label{eq_comp_equiv}
 \mathcal P(n)\setminus d_i(\mathcal B) = u_i(\mathcal P(n)\setminus \mathcal B),
\end{equation}
where $u_i$ is the up-compression in direction $i$, defined analogously to $d_i$. Also, for $\mathcal B\subseteq \mathcal P(n)$ we define $d(\mathcal B)$ to be $d_n\dots d_1(\mathcal B)$, the compression obtained by applying the compressions $d_i$ to $\mathcal B$ for $1\le i \le n$, starting with $d_1$.

 For $B\in \mathcal B$ we define $d_{\mathcal B}(B)$ to be the image of $B$ under the down-compression $d_\mathcal B$; that is, letting $\mathcal B_i$ = $d_i\dots d_1 (\mathcal B)$, we define
\[
 d_\mathcal B(B) = d_{(n, \mathcal B_{n-1})}\dots d_{(2, \mathcal B_1)}d_{(1,\mathcal B)}(B);
\]
so $d_{\mathcal B}(B)$ is the set we get by following $B$ through the compressions $d_i$. Similarly, we shall often want to consider the family $\mathcal B$ after some of the compressions $d_i$ have been applied; to this end we define $D_k(\mathcal B) = d_k\dots d_1(\mathcal B)$, the family after compressing in directions $i$ for $1\le i \le k$, and for $B\in \mathcal B$ we define $D_{(\mathcal B,k)}(B) = d_{(k, \mathcal B_{k-1})}\dots d_{(1,\mathcal B)}(B)$, the image of the set $B$ in $D_k(\mathcal B)$.

Following the approach of \cite{BaBoEc}, we shall view the complement of a union-closed family as a simply rooted family --- this perspective is crucial for our proof of Theorem \ref{thm_main}. We call a family $\mathcal B\subseteq \mathcal P(n)$ \emph{simply rooted} if for every $\emptyset \neq B\in \mathcal B$, there is some $b\in B$ with $[\{b\},B]\subseteq \mathcal B$. The following simple observation was made in \cite{BaBoEc}.

\begin{observation}\label{obs_rooted}
Let $\mathcal A\subseteq \mathcal P(n)$, and $\mathcal B = \mathcal P(n)\setminus \mathcal A$. Then $\mathcal B$ is a simply rooted family if and only if $\mathcal A$ is a union-closed family.
\end{observation}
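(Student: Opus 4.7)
The plan is to prove the biconditional directly from the two definitions, with a short argument by contradiction in each direction. The whole observation is essentially unpacking: the interval $[\{b\},B]$ is exactly the set of $X\subseteq B$ containing $b$, and these intervals (over $b\in B$) cover every subset of $B$ that could arise as a union of proper subsets of $B$.

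For the forward direction, assume $\mathcal B$ is simply rooted and suppose for contradiction that $A,A'\in\mathcal A$ but $A\cup A'\in\mathcal B$. If $A$ and $A'$ were both empty then $A\cup A'=\emptyset=A\in\mathcal A$, so $B:=A\cup A'$ is nonempty. By the simply rooted property there is $b\in B$ with $[\{b\},B]\subseteq\mathcal B$. But $b$ lies in $A$ or $A'$; say $b\in A$. Then $\{b\}\subseteq A\subseteq A\cup A'=B$, so $A\in[\{b\},B]\subseteq\mathcal B$, contradicting $A\in\mathcal A$.

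For the reverse direction, assume $\mathcal A$ is union-closed, take any nonempty $B\in\mathcal B$, and suppose no $b\in B$ satisfies $[\{b\},B]\subseteq\mathcal B$. Then for each $b\in B$ I can choose a set $X_b\in\mathcal A$ with $\{b\}\subseteq X_b\subseteq B$. By union-closure applied iteratively, the set $\bigcup_{b\in B}X_b$ lies in $\mathcal A$; but this union is contained in $B$ and contains every element of $B$, so it equals $B$. Hence $B\in\mathcal A$, contradicting $B\in\mathcal B$.

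There is no real obstacle here — both directions are one-line contradictions once the definitions are unfolded. The only tiny care needed is the edge case $A=A'=\emptyset$ in the forward direction, and the observation that finite union-closure gives closure under arbitrary finite unions, which is used in the reverse direction when $|B|>2$.
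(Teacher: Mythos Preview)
Your proof is correct and takes essentially the same approach as the paper: the paper's two-line argument rephrases union-closedness of $\mathcal A$ as the condition that for every $B\notin\mathcal A$ the union $\bigcup_{B'\subseteq B,\,B'\in\mathcal A}B'$ falls short of $B$, which is exactly your choice of the $X_b$'s in the reverse direction and the ``$b\in A$ or $b\in A'$'' observation in the forward direction. You have simply unpacked the equivalences more explicitly and handled the empty-set edge case, which the paper leaves implicit.
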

\begin{proof}
The family $\mathcal A$ is union-closed exactly when for every $B\notin \mathcal A$ we have
\[
\bigcup_{B'\subseteq B,\,B'\in \mathcal A}B'\neq B,
\]
which is in turn true exactly when $[\{b\},B]\subseteq \mathcal B$ for some $b\in B$.
\end{proof}

Finally, we recall the colex order on $\mathbb N^{(<\infty)}$, the collection of finite sets of positive integers, and some of its standard properties. Given $A$ and $B$ sets in $\mathbb N^{(<\infty)}$, we define the colex order $<$ by
\[
 A<B \iff \max(A\triangle B) \in B.
\]
This is a linear order on $\mathbb N^{(<\infty)}$. We write $\mathcal I(m)$ for the initial segment of this order of length $m$; so, for example, $\mathcal I(9) = \{\emptyset, 1, 2, 12, 3, 13, 23, 123, 4\}$, where we write $13$ for the set $\{1,3\}$. Also, a family of sets $\mathcal D$ is called a \emph{down-set} if for every $A\in \mathcal D$ we have $\mathcal P(A)\subseteq \mathcal D$. The following result is a well-known consequence of the fundamental theorem of Kruskal \cite{Kru} and Katona \cite{Kat}.
\begin{lemma}\label{lem_KK}
If $\mathcal D$ is a down-set, then
$||\mathcal D||\le ||\mathcal I(|\mathcal D|)||$. \hfill\qed
\end{lemma}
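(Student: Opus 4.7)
The plan is to apply the Kruskal--Katona theorem in a layer-by-layer fashion. Write $\mathcal D^{(k)}=\{A\in\mathcal D:|A|=k\}$, $d_k=|\mathcal D^{(k)}|$, and let $\mathcal C_k(t)$ denote the initial segment of length $t$ of the colex order restricted to $k$-subsets of $\mathbb N$. First, I would perform a layer-wise colex compression: replace each $\mathcal D^{(k)}$ by $\mathcal C_k(d_k)$. This clearly preserves $|\mathcal D|$ and $||\mathcal D||=\sum_k k d_k$. The new family is still a down-set: since $\mathcal D$ was a down-set we have $d_{k-1}\ge|\partial\mathcal D^{(k)}|$; Kruskal--Katona gives $|\partial\mathcal D^{(k)}|\ge|\partial\mathcal C_k(d_k)|$; and the lower shadow of an initial colex segment of $k$-sets is itself an initial colex segment of $(k-1)$-sets, so $\partial\mathcal C_k(d_k)\subseteq\mathcal C_{k-1}(d_{k-1})$, as required.

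After this reduction, $\mathcal D$ is determined by its profile $(d_k)$, which satisfies $\sum_k d_k=m$ together with the shadow constraints $d_{k-1}\ge|\partial\mathcal C_k(d_k)|$. The remaining task is to show that among all such profiles, the profile of $\mathcal I(m)$ uniquely maximises $\sum_k k d_k$. I would prove this by an exchange argument: if $(d_k)$ differs from the profile of $\mathcal I(m)$, then since both sum to $m$ there must exist $j<k$ with $d_j>|\mathcal I(m)^{(j)}|$ and $d_k<|\mathcal I(m)^{(k)}|$, and one can transfer mass upward through a sequence of valid moves, each strictly increasing $\sum_k k d_k$ and preserving the shadow constraints.

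The main obstacle is executing the exchange rigorously while maintaining \emph{every} shadow constraint, not merely the one between layers $j$ and $j+1$: naively removing a set from layer $j$ may violate the constraint coming from layer $j+1$ if that constraint was already tight. The resolution is to use the characterisation of $\mathcal I(m)$'s profile as the ``greediest'' valid one --- it loads as much mass as possible into the highest layers consistent with Kruskal--Katona --- so that an admissible upward move is always available whenever the current profile lags behind $\mathcal I(m)$'s at some level. Iterating these moves terminates at the profile of $\mathcal I(m)$, giving the desired inequality.
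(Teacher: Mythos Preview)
The paper does not actually prove this lemma: it is stated with a \qed\ and described only as ``a well-known consequence of the fundamental theorem of Kruskal and Katona''. So there is no argument in the paper to compare against; you have supplied a proof where the paper supplies none.

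Your Step~1 (replacing each layer by the colex initial segment of the same size, and using Kruskal--Katona to check that the result is still a down-set) is correct and standard. The gap is in Step~2. You assert that ``since both sum to $m$ there must exist $j<k$ with $d_j>e_j$ and $d_k<e_k$'' (writing $e_k=|\mathcal I(m)^{(k)}|$). Equal sums only guarantee that \emph{some} index has $d_j>e_j$ and \emph{some} index has $d_k<e_k$; the ordering $j<k$ is precisely the statement that the profile of $\mathcal I(m)$ dominates from above, i.e.\ that $\sum_{i\ge k} d_i\le\sum_{i\ge k} e_i$ for every $k$. Your ``resolution'' paragraph then invokes this same dominance (``$\mathcal I(m)$'s profile is the greediest valid one'') as an input, so the one nontrivial step is being assumed rather than proved.

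Note that once dominance is in hand, the exchange machinery is unnecessary: Abel summation gives
\[
\|\mathcal D\|=\sum_{k\ge 0} k\,d_k=\sum_{k\ge 1}\Bigl(\sum_{i\ge k} d_i\Bigr)\le \sum_{k\ge 1}\Bigl(\sum_{i\ge k} e_i\Bigr)=\|\mathcal I(m)\|
\]
directly. The dominance statement (that among down-sets of size $m$, the colex initial segment has the most sets of size at least $k$, for every $k$) does follow from iterated Kruskal--Katona, but it requires an honest argument; supplying that argument is essentially the whole content of the lemma, and your sketch does not provide it.
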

The other fact which we shall need about initial segments of colex is the following lemma, which is a simple corollary of Lemma \ref{lem_KK} -- see for example \cite{BaBoEc}.
\begin{lemma}\label{lem_colex_sums}
 Let $m_1$ and $m_2$ be positive integers. Then
\[
 ||\mathcal I(m_1)||+||\mathcal I(m_2)|| \le ||\mathcal I(m_1+m_2)|| - \min(m_1,m_2).
\]
\end{lemma}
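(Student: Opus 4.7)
The plan is to prove the inequality by exhibiting a single down-set whose size is $m_1+m_2$ and whose total size equals $||\mathcal I(m_1)||+||\mathcal I(m_2)||+\min(m_1,m_2)$; the lemma then drops out of Lemma \ref{lem_KK} applied to this down-set.

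Without loss of generality I will assume $m_1 \le m_2$, so that $\min(m_1,m_2)=m_1$ and in particular $\mathcal I(m_1)\subseteq \mathcal I(m_2)$. Pick an integer $N$ strictly larger than every element appearing in any set of $\mathcal I(m_2)$, and define
\[
\mathcal D \;=\; \mathcal I(m_2)\,\cup\,\bigl\{A\cup\{N\}:A\in\mathcal I(m_1)\bigr\}.
\]
Because $N$ is in every set of the second piece and in no set of the first, this is a disjoint union, so $|\mathcal D|=m_1+m_2$. Moreover the second piece contributes $||\mathcal I(m_1)||+m_1$ to the total size (one extra element $N$ per set), so
\[
||\mathcal D|| \;=\; ||\mathcal I(m_2)||+||\mathcal I(m_1)||+m_1.
\]

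The key verification is that $\mathcal D$ is a down-set. For a set $B\in\mathcal I(m_2)$ every subset lies in $\mathcal I(m_2)\subseteq\mathcal D$ because $\mathcal I(m_2)$ is itself a down-set. For a set $A\cup\{N\}$ with $A\in\mathcal I(m_1)$, the subsets split into those containing $N$ and those not containing $N$; the former have the form $A'\cup\{N\}$ with $A'\subseteq A$, which lie in the second piece of $\mathcal D$ since $\mathcal I(m_1)$ is a down-set, while the latter are subsets of $A$, hence lie in $\mathcal I(m_1)\subseteq \mathcal I(m_2)\subseteq\mathcal D$. This is where the inclusion $\mathcal I(m_1)\subseteq\mathcal I(m_2)$, and therefore the WLOG assumption, is used.

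Applying Lemma \ref{lem_KK} to $\mathcal D$ then gives $||\mathcal D||\le ||\mathcal I(m_1+m_2)||$, and rearranging yields exactly the claimed inequality. There is no serious obstacle here: the only thing that needs care is choosing the right side on which to attach the fresh coordinate $N$, so that the \emph{smaller} initial segment sits above and the down-set property is preserved; once that choice is made, the bound is immediate from Kruskal--Katona.
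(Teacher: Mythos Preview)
Your proof is correct and is essentially the same argument as the paper's: both construct the down-set $\mathcal I(\max(m_1,m_2))\cup\{A\cup\{N\}:A\in\mathcal I(\min(m_1,m_2))\}$ for large $N$ and apply Lemma~\ref{lem_KK}. Your write-up simply fills in the details (disjointness, the size and total-size counts, and the verification that $\mathcal D$ is a down-set using $\mathcal I(m_1)\subseteq\mathcal I(m_2)$) that the paper leaves implicit.
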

This can be proved for $m_1\ge m_2$ by applying Lemma \ref{lem_KK} to the down-set $\mathcal I(m_1)\cup\{A+N:A\in \mathcal I(m_2)\}$, for a sufficiently large integer $N$. \hfill\qed

\section{Stability for sizes of simply rooted families}\label{sec_state}
For a simply rooted family $\mathcal B\subseteq \mathcal P(n)$, a set $B\in \mathcal B$ and an element $i\in [n]$, we say that $B$ is \emph{$\mathcal B$-rooted at $i$} if $i\in B$ and the cube $[\{i\},B]$ is contained in $\mathcal B$. Then for a set $S\subseteq [n]$, we define $\mathcal B_S$ to be those sets of $\mathcal B$ which are $\mathcal B$-rooted at some $i\in S$. 

Let $\mathcal B\subseteq \mathcal P(n)$ be a simply rooted family. By Observation \ref{obs_rooted} the family $\mathcal P(n)\setminus \mathcal B$ is union-closed, and so Theorem \ref{thm_colex_bound} gives us
\begin{equation}\label{eq_rooted_bound}
||\mathcal B|| = ||\mathcal P(n)|| - ||\mathcal A|| \le ||\mathcal P(n)||-f(|\mathcal A|) = ||\mathcal I(|\mathcal B|)||+|\mathcal B|.
\end{equation}
For any $m$, the family $\mathcal B=\{B+n:B\in \mathcal I(m)\}$ makes this inequality tight for $n=\lceil log_2(m)\rceil +1$ --- every set in this family is $\mathcal B$-rooted at $n$. In fact, up to isomorphism this is the only simply rooted family of $m$ sets for which equality holds; this is a consequence of the uniqueness of extremal families for $f(m)$, which was proved in \cite{BaBoEc}. In particular, if $||\mathcal B|| = ||\mathcal I(|\mathcal B|)||+|\mathcal B|$ then $\mathcal B_{\{i\}} = \mathcal B$ for some $i\in [n]$. The following result extends this, showing that if $||\mathcal B||$ is close to $||\mathcal I(m)||+m$ then $|\mathcal B_{\{i\}}|$ is large for some $i$.
\begin{theorem}\label{thm_stability}
Let $\mathcal B$ be a simply rooted family in $\mathcal P(n)$ with $|\mathcal B| = m$, and $p\in [0,1]$. Suppose that $|\mathcal B_{\{i\}}| \le p m$ for all $i\in [n]$. Then
\begin{equation*}
||\mathcal B|| \le ||\mathcal I(m)|| + m - m^2(1/12 - p^2/12)/2^n.
\end{equation*}
\end{theorem}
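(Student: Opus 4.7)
The plan leverages the structural observation used to derive~(\ref{eq_rooted_bound}): for each $i \in [n]$ the translate $\mathcal B_{\{i\}} - i := \{B \setminus \{i\} : B \in \mathcal B_{\{i\}}\}$ is a down-set in $\mathcal P([n]\setminus\{i\})$, since if $B$ is rooted at $i$ and $\{i\} \subseteq C \subseteq B$ then $[\{i\}, C] \subseteq [\{i\}, B] \subseteq \mathcal B$, so $C$ is also rooted at $i$. Hence by Lemma~\ref{lem_KK} one has $||\mathcal B_{\{i\}} - i|| \le ||\mathcal I(|\mathcal B_{\{i\}}|)||$.

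My first step is to partition $\mathcal B \setminus \{\emptyset\}$ into disjoint pieces $\mathcal D_1, \dots, \mathcal D_n$ with $\mathcal D_i \subseteq \mathcal B_{\{i\}}$ and each translate $\mathcal D_i - i$ still a down-set. Given such a partition one has $|\mathcal D_i| \le |\mathcal B_{\{i\}}| \le pm$ and $\sum_i |\mathcal D_i| \le m$, and applying Lemma~\ref{lem_KK} piecewise yields
\[
||\mathcal B|| \le \sum_i \bigl( |\mathcal D_i| + ||\mathcal I(|\mathcal D_i|)|| \bigr) \le m + \sum_i ||\mathcal I(|\mathcal D_i|)||.
\]
Iteratively merging the colex initial segments into the largest piece using Lemma~\ref{lem_colex_sums}, I obtain $\sum_i ||\mathcal I(|\mathcal D_i|)|| \le ||\mathcal I(m)|| - L$, where $L$ is at least the total size of the non-maximal pieces, so $L \ge (1-p)m - O(1)$. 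Since $m \le 2^n$ and $1+p \le 2 \ll 12$, one easily checks that $(1-p)m \ge m^2(1-p^2)/(12 \cdot 2^n)$, comfortably giving the stability bound (with any edge-case losses absorbed when $m$ is small, where the theorem is essentially vacuous).

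The main obstacle is constructing the partition in the first step. Natural choices of canonical root --- $r(B) = \min R(B)$ or $r(B) = \max R(B)$, where $R(B) = \{i : B \text{ is } \mathcal B\text{-rooted at } i\}$ --- do not in general produce pieces whose translates $\mathcal D_i - i$ are down-sets, because a single set may lie in two rooted cubes $[\{i\}, B^*_1]$ and $[\{j\}, B^*_2]$ whose forced assignments conflict. I expect this must be resolved either by first preprocessing $\mathcal B$ via the down-compressions $d_1, \dots, d_n$ (which preserve simple rootedness by~(\ref{eq_comp_equiv}), though they reduce $||\mathcal B||$, so their effect on both $||\mathcal B||$ and the $|\mathcal B_{\{i\}}|$ must be tracked) to reach a family where a consistent canonical root exists, or by replacing the strict partition with an inclusion--exclusion over the overlapping $\mathcal B_{\{i\}}$ in which the overcount term $\sum_B |B|(|R(B)| - 1)$ can be shown to contribute the needed savings. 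Navigating this step carefully is the technical heart of the argument.
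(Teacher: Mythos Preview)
Your proposal has a genuine gap at precisely the point you flag as ``the technical heart'': the disjoint decomposition $\mathcal B\setminus\{\emptyset\}=\mathcal D_1\cup\dots\cup\mathcal D_n$ with each $\mathcal D_i-i$ a down-set need not exist. For a concrete obstruction, take $n=4$ and
\[
\mathcal B=\bigl\{\{1\},\{2\},\{1,2\},\{1,3\},\{2,4\},\{1,2,3\},\{1,2,4\}\bigr\}.
\]
This family is simply rooted; one checks that $\{1,2,3\}$ is $\mathcal B$-rooted only at $1$ (since $\{2,3\}\notin\mathcal B$) and $\{1,2,4\}$ only at $2$ (since $\{1,4\}\notin\mathcal B$), so they are forced into $\mathcal D_1$ and $\mathcal D_2$ respectively. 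The down-set condition on $\mathcal D_1-1$ then forces $\{1,2\}\in\mathcal D_1$, while the down-set condition on $\mathcal D_2-2$ forces $\{1,2\}\in\mathcal D_2$, a contradiction. Neither of your suggested repairs is straightforward: fully compressing $\mathcal B$ destroys the hypothesis (in a down-set every $B$ is rooted at every element of $B$, so the quantities $|\mathcal B_{\{i\}}|$ can jump to order $m$, and you would need to control simultaneously the drop in $||\mathcal B||$ and the growth of the $|\mathcal B_{\{i\}}|$ along the compression), and the inclusion--exclusion idea is not developed enough to assess. More tellingly, your scheme --- if the partition existed --- would give $||\mathcal B||\le||\mathcal I(m)||+pm$, which is exactly Conjecture~\ref{con:max_rooted}, stated in the paper as open. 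So the obstruction is not an artefact of clumsy bookkeeping but a real barrier.

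The paper's proof takes an entirely different route and never attempts such a decomposition. It isolates the \emph{bad sets} of $\mathcal B$ --- those with $\delta B\subseteq\mathcal B$ or with $d_{\mathcal B}(B)=B$ --- and shows (Lemmas~\ref{lem_no_falls} and~\ref{lem_full_sh}) that many bad sets of either type force $||\mathcal B||$ well below $||\mathcal I(m)||+m$. The hypothesis on the $|\mathcal B_{\{i\}}|$ is used only to choose a partition $[n]=S\cup T$ with both $|\mathcal B_S|$ and $|\mathcal B_T|$ of order $m$; Harris's Lemma then forces the down-sets $d(\mathcal B_S)$ and $d(\mathcal B_T)$ to overlap in at least $2^{-n}|\mathcal B_S||\mathcal B_T|$ sets, and a careful analysis of the compression maps (Lemmas~\ref{lem_good_fall} and~\ref{lem_split_rooted}) converts this overlap into a lower bound on the number of bad sets. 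This is what produces the $m^2/2^n$ scale in the saving, rather than the linear-in-$m$ saving your plan was aiming for.
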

Theorem \ref{thm_stability} provides a stability result for Theorem \ref{thm_colex_bound} for union-closed families $\mathcal A \subseteq \mathcal P(n)$ with $|\mathcal A|\ge 2^{n-1}$. Indeed, let $\mathcal A$ be such a family and set $\mathcal B = \mathcal P(n)\setminus \mathcal A$ with $|\mathcal B|=m$. Since $\mathcal B$ is a simply rooted family by Observation \ref{obs_rooted}, if $|\mathcal B_{\{i\}}| \le p m$ for all $i$ we have
\begin{align*}
||\mathcal A|| &= ||\mathcal P(n)||-||\mathcal B||\\
& \ge ||\mathcal P(n)||-||\mathcal I(m)|| - m + m^2(1/12 - p^2/12)/2^n\\
&= f(|\mathcal A|) + m^2(1/12 - p^2/12)/2^n.
\end{align*}
Hence if $||\mathcal A||$ is close to $f(|\mathcal A|)$, some element of $[n]$ appears in nearly all the sets of $\mathcal B$.

\section{Proofs of main theorems}\label{sec_pf_thm}
Now we turn to the proofs of Theorems \ref{thm_stability} and \ref{thm_main}. First we give two definitions we shall need in the proof of Theorem \ref{thm_stability}. For a finite set $B$, let $\delta B = \{B-i : i \in B\}$ be the \emph{shadow of $B$}. Given a simply rooted family $\mathcal B\subseteq \mathcal P(n)$, we call a set $B\in \mathcal B$ a \emph{bad set of $\mathcal B$} if either $\delta B\subseteq \mathcal B$ or $d_{\mathcal B}(B)=B$. We call a set $B\in \mathcal B$ that is not bad a \emph{good set of $\mathcal B$}. 

We now sketch the proof of Theorem \ref{thm_stability}. In Lemmas \ref{lem_no_falls} and \ref{lem_full_sh} we shall show that if $\mathcal B$ has many bad sets then $||\mathcal B||$ is much less than $||\mathcal I(m)||+m$; as a result, it is enough to show that a simply rooted family satisfying the condition of Theorem \ref{thm_stability} has many bad sets. Given such a simply rooted family $\mathcal B$, we shall then write $\mathcal B$ as $\mathcal B_S\cup \mathcal B_T$, where $S\cup T$ is a partition of $[n]$. Since no $\mathcal B_{\{i\}}$ is too large, we can do this so both $\mathcal B_S$ and $\mathcal B_T$ are fairly large. If their intersection $|\mathcal B_S\cap \mathcal B_T|$ is large, then we conclude that $\mathcal B$ has many bad sets, since all the sets of $\mathcal B_S\cap \mathcal B_T$ are $\mathcal B$-rooted at two elements of $[n]$, and so are bad sets of $\mathcal B$. If, on the other hand, $|\mathcal B_S\cap \mathcal B_T|$ is small, we shall show in Corollary \ref{cor_lower_b} that $\mathcal B$ still has many bad sets. We prove this by considering the down-sets $d(\mathcal B_S)$ and $d(\mathcal B_T)$; since these are large down-sets in $\mathcal P(n)$, they have a large intersection, and in Lemma \ref{lem_split_rooted} we shall show that sets in this intersection correspond to sets in either $\mathcal B_S \cap \mathcal B_T$ or bad sets of $\mathcal B$.

\subsection{Applying down-compressions to simply rooted families}
In Lemma \ref{lem_no_falls}, we shall show that if $\mathcal B$ has many sets with $d_\mathcal B(B)=B$ then $||\mathcal B||$ is small. In order to prove this lemma, we first recall some results of Reimer \cite{Rei} on union-closed families, restating them in terms of simply rooted families.

\begin{lemma}\label{lem_Rei_basics}
 Let $\mathcal B\subseteq \mathcal P(n)$ be a simply rooted family. Then
\begin{enumerate}
\item \label{it_end_down} $d(\mathcal B)$ is a down-set,
\item \label{it_rooted}for $1\le k\le n$, $D_k(\mathcal B)$ is a simply rooted family.\hfill\qed
\end{enumerate}
\end{lemma}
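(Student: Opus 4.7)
The plan is to translate both statements into standard properties of up- and down-compressions on general families, using the bridge supplied by \eqref{eq_comp_equiv}, which reads $\mathcal P(n)\setminus d_i(\mathcal B) = u_i(\mathcal P(n)\setminus \mathcal B)$, together with Observation \ref{obs_rooted}, which turns ``$\mathcal B$ is simply rooted'' into ``$\mathcal P(n)\setminus \mathcal B$ is union-closed''. Under these translations, both parts of the lemma become classical facts; the task is to identify which ones and assemble the compressions in the right order.

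For part \ref{it_end_down}, I would call a family $\mathcal C\subseteq \mathcal P(n)$ \emph{$i$-compressed} if $B\in \mathcal C$ and $i\in B$ imply $B-i\in \mathcal C$, and show that $D_k(\mathcal B)$ is $i$-compressed for every $i\le k$. The base observation is that $d_i(\mathcal C)$ is $i$-compressed for any family $\mathcal C$, which falls straight out of the definition of $d_{(i,\mathcal C)}$: if $i\in B\in d_i(\mathcal C)$ then $B\in \mathcal C$ (since the compression would otherwise have removed $i$), and then $B-i$ lies in $\mathcal C$ or is produced as $d_{(i,\mathcal C)}(B)$. The less immediate step is to check that if $\mathcal C$ is $i$-compressed and $j\ne i$, then $d_j(\mathcal C)$ is still $i$-compressed; this is a short case analysis split on whether $j\in B$, in which one uses the explicit characterisation $B\in d_j(\mathcal C)$ iff ($j\notin B$ and $B\in \mathcal C$ or $B\cup\{j\}\in \mathcal C$) or ($j\in B$ and both $B,B-j\in \mathcal C$). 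A straightforward induction then shows that $d(\mathcal B)=D_n(\mathcal B)$ is $i$-compressed for every $i\in[n]$, which is exactly the statement that it is a down-set.

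For part \ref{it_rooted}, I would argue by induction on $k$, with the base case $k=0$ handled by hypothesis. For the inductive step, assuming $D_{k-1}(\mathcal B)$ is simply rooted, I want $d_k(D_{k-1}(\mathcal B))$ to be simply rooted. By Observation \ref{obs_rooted} this is equivalent to $\mathcal P(n)\setminus d_k(D_{k-1}(\mathcal B))$ being union-closed; by \eqref{eq_comp_equiv} this complement is $u_k(\mathcal P(n)\setminus D_{k-1}(\mathcal B))$, and the family $\mathcal P(n)\setminus D_{k-1}(\mathcal B)$ is union-closed by the inductive hypothesis and Observation \ref{obs_rooted}. So everything reduces to the classical fact (see Bollob\'as and Leader \cite{BoLe}) that up-compressions preserve the union-closed property: given $X,Y\in u_k(\mathcal A)$ for $\mathcal A$ union-closed, one identifies their preimages under $u_{(k,\mathcal A)}$ and checks in each combination of cases that $X\cup Y\in u_k(\mathcal A)$, using that $\mathcal A$ is already closed under union.

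The main subtlety lies in part \ref{it_rooted}: when both $X$ and $Y$ arise as the image of preimages that were pushed up (so $X-k,Y-k\in \mathcal A$ with $X,Y\notin \mathcal A$), one must verify that $X\cup Y\in u_k(\mathcal A)$ by noting $(X-k)\cup(Y-k)=(X\cup Y)-k\in \mathcal A$ and then checking that the up-compression $u_{(k,\mathcal A)}$ sends this element to $X\cup Y$ (or that $X\cup Y$ was already in $\mathcal A$). No single case is hard, but the bookkeeping across the four configurations of preimages is the only real content of the argument.
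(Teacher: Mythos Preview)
Your proposal is correct. The paper itself gives no proof of this lemma: it is stated as a restatement of results of Reimer \cite{Rei} on union-closed families and up-compressions, and is closed with a \qed\ symbol. Your argument for part \ref{it_rooted} via the duality in \eqref{eq_comp_equiv} and Observation \ref{obs_rooted} is precisely the translation the paper has in mind when it says it is ``restating them in terms of simply rooted families''; the underlying fact that up-compressions preserve union-closedness is exactly Reimer's lemma. One small remark: your proof of part \ref{it_end_down} nowhere uses that $\mathcal B$ is simply rooted, and indeed the conclusion that $d_n\cdots d_1(\mathcal B)$ is a down-set holds for an arbitrary family $\mathcal B\subseteq\mathcal P(n)$ --- the preservation of $i$-compressedness under $d_j$ for $j\ne i$ is a general fact about compressions.
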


We now prove some further basic properties of down-compressions on simply rooted families.

\begin{lemma}\label{lem_rooted_basics}
Let $\mathcal B\subseteq \mathcal P(n)$ be a simply rooted family. Then
\begin{enumerate}
\item \label{it_down-set}for $B\in \mathcal B$ and $1\le k\le n$, if $D_{(\mathcal B, k)}(B)\neq B$ then $\mathcal P(D_{(\mathcal B, k)}(B))\subseteq D_k(\mathcal B)$,
\item \label{it_one_fall}for $B\in \mathcal B$, $|B\setminus d_{\mathcal B}(B)|\le 1$.
\end{enumerate}
\end{lemma}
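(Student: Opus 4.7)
The plan is to prove part \ref{it_down-set} by induction on $k$, and then to deduce part \ref{it_one_fall} as a simple consequence. Throughout, write $B^{(j)} := D_{(\mathcal B, j)}(B)$ and $\mathcal B_j := D_j(\mathcal B)$.

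The engine of the argument is the following propagation observation, which I will use several times: if $X \in \mathcal B_{k-1}$ satisfies $\mathcal P(X) \subseteq \mathcal B_{k-1}$, then $\mathcal P(X) \subseteq \mathcal B_k$ as well. Indeed, for any $Y \in \mathcal P(X)$, either $k \notin Y$ and $d_k$ cannot affect $Y$, or $k \in Y$ and then $Y - k \in \mathcal P(X) \subseteq \mathcal B_{k-1}$, so the moving condition for $d_k$ on $Y$ fails; hence $Y$ remains in $\mathcal B_k$.

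For the inductive step of part \ref{it_down-set}, I would split on whether $d_k$ moves $B^{(k-1)}$. If it does not, then $B^{(k)} = B^{(k-1)} \neq B$, the inductive hypothesis gives $\mathcal P(B^{(k-1)}) \subseteq \mathcal B_{k-1}$, and the propagation observation immediately yields $\mathcal P(B^{(k)}) \subseteq \mathcal B_k$. The main case is when $d_k$ sends $B^{(k-1)}$ down to $B^{(k-1)} - k$; setting $B' := B^{(k-1)}$, we have $k \in B'$, $B' \in \mathcal B_{k-1}$, but $B' - k \notin \mathcal B_{k-1}$. By Lemma \ref{lem_Rei_basics} part \ref{it_rooted}, the family $\mathcal B_{k-1}$ is simply rooted, so there is some $b \in B'$ with $[\{b\}, B'] \subseteq \mathcal B_{k-1}$. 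The key step --- and what I expect to be the only non-routine point in the proof --- is that $b$ must equal $k$: any other choice of $b$ would place $B' - k$ inside this rooted cube, contradicting $B' - k \notin \mathcal B_{k-1}$. Hence $[\{k\}, B'] \subseteq \mathcal B_{k-1}$, so $C \cup \{k\} \in \mathcal B_{k-1}$ for every $C \subseteq B' - k$. Applying $d_k$, we see that $C$ itself is fixed (since $k \notin C$), while $C \cup \{k\}$ either stays (if $C \in \mathcal B_{k-1}$) or is sent to $C$ by $d_k$ (if $C \notin \mathcal B_{k-1}$); either way $C \in \mathcal B_k$, giving $\mathcal P(B^{(k)}) = \mathcal P(B' - k) \subseteq \mathcal B_k$.

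For part \ref{it_one_fall}, if $d_\mathcal B(B) = B$ there is nothing to show; otherwise let $i$ be the first step at which $B$ actually moves, so $B^{(i)} = B - i$ and part \ref{it_down-set} gives $\mathcal P(B - i) \subseteq \mathcal B_i$. Iterating the propagation observation then yields $\mathcal P(B - i) \subseteq \mathcal B_k$ for every $k \ge i$, which forces each subsequent $d_k$ to fix $B - i$. Therefore $d_\mathcal B(B) = B - i$ and $|B \setminus d_\mathcal B(B)| = 1$.
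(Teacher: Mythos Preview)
Your proof is correct and follows essentially the same approach as the paper. The paper does not set up an explicit induction on $k$ but instead goes straight to the minimal index $\ell$ at which $B$ first moves (your ``main case''), makes the identical key observation that the root in $\mathcal B_{\ell-1}$ must be $\ell$, and then uses the fact that $\mathcal P(B-\ell)$ is a down-set fixed by all later compressions (your propagation observation) to conclude both parts at once; your inductive framework simply repackages the same argument.
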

\begin{proof}
Suppose that $D_{(\mathcal B, j)}(B)\neq B$ for some $j\in [n]$; otherwise both parts of the lemma hold for the set $B$. Let $\ell$ be minimal with $D_{(\mathcal B, \ell)}(B) \neq B$. Then $D_{(\mathcal B, \ell)}(B) = B-\ell$, and $B-\ell\notin D_{\ell-1}(\mathcal B)$. Also, by Part \ref{it_rooted} of Lemma \ref{lem_Rei_basics}, $D_{\ell-1}(\mathcal B)$ is simply rooted, and so there is some $i\in B$ such that $[\{i\},B] \subseteq D_{\ell-1}(\mathcal B)$ --- and since $B-\ell \notin D_{\ell-1}(\mathcal B)$, we must have $i = \ell$. Hence $\mathcal P(B-\ell) \subseteq D_\ell(\mathcal B)$, since if a family $\mathcal F$ contains $S+\ell$ for some set $S$ then $d_\ell(\mathcal F)$ contains $S$.

Now, $\mathcal P(B-\ell)$ is a down-set which is contained in $D_\ell(\mathcal B)$, and so any down-compression of the family $D_\ell(\mathcal B)$ fixes every set in $\mathcal P(B-\ell)$. For Part \ref{it_down-set} of the lemma, if $D_{(\mathcal B, k)}(B)\neq B$ for some $k\in [n]$, $k\ge \ell$, and so $D_k(\mathcal B) = d_k\dots d_{l+1}(D_l(\mathcal B))$. Hence we have $D_{(\mathcal B,k)}(B) = B-\ell$ and
\[
\mathcal P(D_{(\mathcal B, k)}(B)) = \mathcal P(B-\ell)\subseteq D_k(\mathcal B),
\]
so Part \ref{it_down-set} holds. For Part \ref{it_one_fall}, we have $d_{\mathcal B}(B)=D_{(\mathcal B,\ell)}(B)=B-\ell$, so $|B\setminus d_{\mathcal B}(B)|=1$.
\end{proof}
From Lemmas \ref{lem_Rei_basics} and \ref{lem_rooted_basics}, we immediately get a bound on the total size of a simply rooted family.
\begin{lemma}\label{lem_no_falls}
Let $\mathcal B$ be a simply rooted family, let $|\mathcal B| = m$, and let $m'$ be the number of sets $B\in \mathcal B$ with $d_{\mathcal B}(B)= B$. Then
\[
||\mathcal B||\le ||\mathcal I(m)|| + m - m'.
\]
\end{lemma}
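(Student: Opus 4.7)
The plan is to compare $\mathcal{B}$ to its full down-compression $d(\mathcal{B})$ and use Lemma \ref{lem_KK}. The key observation is that down-compressions preserve cardinality, so the map $B \mapsto d_{\mathcal{B}}(B)$ is a bijection from $\mathcal{B}$ onto $d(\mathcal{B})$; in particular $|d(\mathcal{B})| = m$.

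First I would compute $||\mathcal B||$ in terms of $||d(\mathcal B)||$. By Part \ref{it_one_fall} of Lemma \ref{lem_rooted_basics}, for every $B \in \mathcal{B}$ we have $|B \setminus d_{\mathcal{B}}(B)| \le 1$, so $|B| - |d_{\mathcal{B}}(B)|$ equals $0$ if $d_{\mathcal{B}}(B) = B$ and equals $1$ otherwise. Summing over $B$ and using the bijection above,
\[
||\mathcal{B}|| \;=\; \sum_{B \in \mathcal{B}} |d_{\mathcal{B}}(B)| \;+\; |\{B \in \mathcal{B} : d_{\mathcal{B}}(B) \neq B\}| \;=\; ||d(\mathcal{B})|| + (m - m').
\]

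Next I would invoke Part \ref{it_end_down} of Lemma \ref{lem_Rei_basics}, which tells us that $d(\mathcal{B})$ is a down-set of size $m$. By Lemma \ref{lem_KK}, $||d(\mathcal{B})|| \le ||\mathcal{I}(m)||$, and substituting into the identity above yields
\[
||\mathcal{B}|| \;\le\; ||\mathcal{I}(m)|| + m - m',
\]
as required.

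There is no real obstacle here: all the hard work is already packaged in Lemmas \ref{lem_Rei_basics} and \ref{lem_rooted_basics}. The only point worth being careful about is that the displayed telescoping relies on $B \mapsto d_{\mathcal{B}}(B)$ being a bijection onto $d(\mathcal{B})$, which is immediate from the fact that each individual compression $d_i$ preserves cardinality.
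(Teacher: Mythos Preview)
Your proof is correct and essentially identical to the paper's: both write $||\mathcal B|| = ||d(\mathcal B)|| + \sum_{B\in\mathcal B}|B\setminus d_{\mathcal B}(B)|$, bound the first term by $||\mathcal I(m)||$ via Lemma~\ref{lem_KK} and Part~\ref{it_end_down} of Lemma~\ref{lem_Rei_basics}, and evaluate the sum as $m-m'$ using Part~\ref{it_one_fall} of Lemma~\ref{lem_rooted_basics}. Your version spells out the bijection $B\mapsto d_{\mathcal B}(B)$ a bit more explicitly, but the argument is the same.
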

\begin{proof}
Note that
\[
 ||\mathcal B|| = ||d(\mathcal B)|| + \sum_{B\in \mathcal B} |B\setminus d_{\mathcal B}(B)|.
\]
By Part \ref{it_end_down} of Lemma \ref{lem_Rei_basics}, $d(\mathcal B)$ is a down-set, and so by Lemma \ref{lem_KK} $||d(\mathcal B)||$ is at most $||\mathcal I(m)||$. Also, by Part \ref{it_one_fall} of Lemma \ref{lem_rooted_basics}, $\sum_{B\in \mathcal B} |B\setminus d_{\mathcal B}(B)|$ is exactly $m-m'$, and so the result follows.
\end{proof}

Note that if $\mathcal B$ is a simply rooted family and $B \in \mathcal B$, $|\delta B\setminus \mathcal B|\le 1$. We shall now show that if there are many $B\in \mathcal B$ with the entire shadow of $B$ contained in $\mathcal B$ then Theorem \ref{thm_stability} holds.

\begin{lemma}\label{lem_full_sh}
 Let $\mathcal B$ be a simply rooted family of size $m$, and set $m'$ to be the number of sets $B\in \mathcal B$ with $\delta B \subseteq \mathcal B$. Then
\[
 ||\mathcal B|| \le ||I(m)|| + m-m'.
\]
\end{lemma}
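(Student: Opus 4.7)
The plan is to mimic the proof of Lemma \ref{lem_no_falls}, but to account for $||\mathcal B||$ via Hasse-diagram edges rather than via the drops of individual sets under $d$. Starting from $||\mathcal B||=\sum_{B\in\mathcal B}|B|$, I would read each summand as the number of downward cube-edges from $B$; splitting such an edge $(B,B-i)$ according to whether $B-i\in\mathcal B$ gives
\[
||\mathcal B|| = e(\mathcal B) + \sum_{B\in\mathcal B}|\delta B\setminus\mathcal B|,
\]
where $e(\mathcal B)$ is the number of cube-edges with both endpoints in $\mathcal B$. The remark preceding the lemma gives $|\delta B\setminus\mathcal B|\le 1$, with equality unless $B$ is one of the $m'$ sets with $\delta B\subseteq\mathcal B$. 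Hence $\sum_{B}|\delta B\setminus\mathcal B|=m-m'$, and the task reduces to showing $e(\mathcal B)\le||\mathcal I(m)||$.

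For this I would use the standard fact that the down-compression $d_i$ never decreases the internal edge-count of a family: direction-$i$ edges are preserved bijectively, while for any $j\neq i$ the direction-$j$ edges are handled by a short four-case analysis on the pair of $\{C,C+i\}$-columns sitting above $C$ and $C+j$. Iterating in $i=1,\dots,n$ yields $e(\mathcal B)\le e(d(\mathcal B))$. By Part \ref{it_end_down} of Lemma \ref{lem_Rei_basics} the family $d(\mathcal B)$ is a down-set, and in any down-set $\mathcal D$ every pair $(B,i)$ with $i\in B\in\mathcal D$ is automatically an internal edge, so $e(d(\mathcal B))=||d(\mathcal B)||$. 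Lemma \ref{lem_KK} then yields $||d(\mathcal B)||\le||\mathcal I(m)||$.

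Combining the two steps, $||\mathcal B||=e(\mathcal B)+(m-m')\le||\mathcal I(m)||+m-m'$, which is the desired inequality. The only step above that is not essentially bookkeeping is the monotonicity of the edge count under $d_i$; since this is a classical edge-isoperimetric observation requiring only the four-case check described, I do not expect it to be a serious obstacle. It is also worth noting that, in contrast with Lemma \ref{lem_no_falls}, it is \emph{not} the case that $\delta B\subseteq\mathcal B$ forces $d_{\mathcal B}(B)=B$ (small examples with $n=3$ show this can fail), which is precisely why the direct edge-counting reformulation is needed rather than a reduction to Lemma \ref{lem_no_falls}.
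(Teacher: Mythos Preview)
Your argument is correct, but it is genuinely different from the paper's. The paper deduces Lemma~\ref{lem_full_sh} from a more general statement, Lemma~\ref{lem_def}, which bounds $||\mathcal B||\le ||\mathcal I(m)||+\mathrm{def}(\mathcal B)$ for an \emph{arbitrary} family; that lemma is proved by induction on $n$, splitting $\mathcal B$ into the layers $\mathcal B_n^+$ and $\mathcal B_n^-$ and invoking Lemma~\ref{lem_colex_sums}. You instead observe the identity $||\mathcal B||=e(\mathcal B)+\mathrm{def}(\mathcal B)$ and bound the internal edge-count $e(\mathcal B)$ by compressing to a down-set (using the standard edge-monotonicity of $d_i$) and then applying Lemma~\ref{lem_KK}. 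Both routes ultimately rest on Kruskal--Katona; yours is arguably more transparent about \emph{why} deficiency is exactly the right correction term, at the price of importing the edge-isoperimetric monotonicity fact. Note, incidentally, that your argument also yields the paper's Lemma~\ref{lem_def} in full generality if you simply iterate the $d_i$ until the family stabilises as a down-set, rather than appealing to Lemma~\ref{lem_Rei_basics}; so the simply-rooted hypothesis is not actually needed for the edge-count bound $e(\mathcal B)\le||\mathcal I(m)||$.
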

We shall deduce this from a more general result. For $\mathcal B$ a finite family of finite sets, we define the \emph{deficiency of $\mathcal B$}, denoted $\mathrm{def}(\mathcal B)$, to be the number of sets in the shadows of sets $B\in \mathcal B$ that are missing from $\mathcal B$ --- that is,
\[
 \mathrm{def}(\mathcal B)  = \sum_{B\in \mathcal B}|\delta(B)\setminus \mathcal B|.
\]
Then we have the following lemma concerning the total size of a family of given size and deficiency.
\begin{lemma}\label{lem_def}
 Suppose that $\mathcal B$ is a finite family of finite sets in $\mathcal P(n)$, with $|\mathcal B|=m$. Then
\begin{equation*}
 ||\mathcal B|| \le ||\mathcal I(m)|| + \mathrm{def}(\mathcal B).
\end{equation*}
\end{lemma}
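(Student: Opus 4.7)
The plan is to prove Lemma \ref{lem_def} by induction on $n$. The base case $n = 0$ is immediate: $\mathcal B \subseteq \{\emptyset\}$ forces $||\mathcal B|| = \mathrm{def}(\mathcal B) = 0$ and $||\mathcal I(|\mathcal B|)|| = 0$, so the inequality holds trivially.

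For $n \ge 1$, I would split $\mathcal B$ according to the element $n$, writing $\mathcal C_0 = \{B \in \mathcal B : n \notin B\}$ and $\mathcal C_1 = \{B - n : B \in \mathcal B,\, n \in B\}$. Both are subsets of $\mathcal P(n-1)$ with $|\mathcal C_0| + |\mathcal C_1| = m$, and immediately $||\mathcal B|| = ||\mathcal C_0|| + ||\mathcal C_1|| + |\mathcal C_1|$. The key computational step is to establish the matching decomposition
\[
\mathrm{def}(\mathcal B) = \mathrm{def}(\mathcal C_0) + \mathrm{def}(\mathcal C_1) + |\mathcal C_1 \setminus \mathcal C_0|.
\]
This is a direct calculation: for $B \in \mathcal C_0$ the shadow $\delta B$ already lies in $\mathcal P(n-1)$ and meets $\mathcal B$ precisely in $\mathcal C_0$, contributing $\mathrm{def}(\mathcal C_0)$ to the total. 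For $B = C + n$ with $C \in \mathcal C_1$ one writes $\delta B = \{C\} \cup \{(C - j) + n : j \in C\}$, and notes that $C \in \mathcal B$ iff $C \in \mathcal C_0$ (contributing $|\mathcal C_1 \setminus \mathcal C_0|$ once we sum over $C$) while $(C - j) + n \in \mathcal B$ iff $C - j \in \mathcal C_1$ (contributing $\mathrm{def}(\mathcal C_1)$ in total).

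Finally, apply the inductive hypothesis to each of $\mathcal C_0, \mathcal C_1$ and invoke Lemma \ref{lem_colex_sums} to obtain
\[
||\mathcal B|| \le ||\mathcal I(m)|| - \min(|\mathcal C_0|, |\mathcal C_1|) + \mathrm{def}(\mathcal C_0) + \mathrm{def}(\mathcal C_1) + |\mathcal C_1|.
\]
Since $|\mathcal C_1| - |\mathcal C_1 \setminus \mathcal C_0| = |\mathcal C_0 \cap \mathcal C_1| \le \min(|\mathcal C_0|, |\mathcal C_1|)$, the right-hand side is at most $||\mathcal I(m)|| + \mathrm{def}(\mathcal B)$, closing the induction. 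The degenerate cases where one of $\mathcal C_0, \mathcal C_1$ is empty must be treated separately, but each reduces at once to the inductive hypothesis applied to the nonempty side. The main obstacle is keeping the shadow decomposition identity straight; the pleasant feature that makes everything balance is that the penalty $-\min(|\mathcal C_0|, |\mathcal C_1|)$ from Lemma \ref{lem_colex_sums} is exactly what is needed to cancel the overlap $|\mathcal C_0 \cap \mathcal C_1|$ produced by the deficiency decomposition.
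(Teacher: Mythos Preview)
Your proof is correct and follows essentially the same approach as the paper: induction on $n$, splitting $\mathcal B$ into the parts with and without the element $n$, establishing the deficiency decomposition $\mathrm{def}(\mathcal B) = \mathrm{def}(\mathcal C_0) + \mathrm{def}(\mathcal C_1) + |\mathcal C_1 \setminus \mathcal C_0|$, and finishing with Lemma~\ref{lem_colex_sums}. Your concluding inequality $|\mathcal C_0 \cap \mathcal C_1| \le \min(|\mathcal C_0|,|\mathcal C_1|)$ handles both cases of the paper's case split in one line, which is a slight streamlining but not a different argument.
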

We note that Lemma \ref{lem_full_sh} is immediate from this lemma, since if $\mathcal B$ is a simply rooted family then for each $B$ we have $|\delta B \setminus \mathcal B|\le 1$, and there are $m'$ sets $B\in \mathcal B$ with $|\delta B \setminus \mathcal B| = 0$, so $\mathrm{def}(\mathcal B)=m-m'$.
\begin{proof}
We apply induction on $n$. For $n=1$ the result is easily checked. If $n>1$, we define families of sets $\mathcal B_n^+$ and $\mathcal B_n^-$ by
\begin{align*}
\mathcal B^+_n &= \{B\in \mathcal P(n-1):B + n\in \mathcal B\},\,\textrm{and}\\
\mathcal B^-_n &= \{B\in \mathcal P(n-1): B\in \mathcal B\},
\end{align*}
so that $|\mathcal B| = |\mathcal B_n^+|+|\mathcal B_n^-|$, and $||\mathcal B|| = ||\mathcal B_n^+||+||\mathcal B_n^-|| + |\mathcal B_n^+|$. We define $m_n^+=|\mathcal B_n^+|$, and $m_n^-=|\mathcal B_n^-|$. Now we count pairs $(B,i)$ such that $B\in \mathcal B$, $i\in B$ and $B-i \notin \mathcal B$ --- these are the pairs which contribute to $\mathrm{def}(\mathcal B)$. We obtain
\begin{align*}
\mathrm{def}(\mathcal B) = &|\{B\in \mathcal B,\, i\in[n]: i\neq n,\,n\in B,\,B-i\notin \mathcal B\}|+\\
&|\{B\in \mathcal B,\,i\in [n]: i\neq n,\,n \notin B,\,B-i\notin \mathcal B\}|+\\
&|\{B\in \mathcal B: n \in B,\,B-n\notin \mathcal B\}|\\
&= \mathrm{def}(\mathcal B_n^+) + \mathrm{def}(\mathcal B_n^-)+|\mathcal B_n^+\setminus \mathcal B_n^-|.
\end{align*}
By the induction hypothesis and Lemma \ref{lem_colex_sums},
\begin{align*}
||\mathcal B|| &= ||\mathcal B_n^+|| + ||\mathcal B_n^-|| + |\mathcal B_n^+|\\
&\le ||\mathcal I(m_{n,+})|| + ||\mathcal I(m_{n,-})|| + m_{n,+} + \mathrm{def}(\mathcal B_n^+)+ \mathrm{def}(\mathcal B_n^-)\\
&\le ||\mathcal I(m)|| - \min(m_{n,+},m_{n,-}) + m_{n,+} + \mathrm{def}(\mathcal B) - |\mathcal B_n^+\setminus \mathcal B_n^-|.
\end{align*}
If $m_{n,+}\le m_{n,-}$ then $||\mathcal B||\le ||\mathcal I(m)|| + \mathrm{def}(\mathcal B)$, and so we are done. If not, then since $|\mathcal B_n^+\setminus \mathcal B_n^-| \ge m_{n,+} - m_{n,-}$ we have
\begin{align*}
||\mathcal B|| &\le ||\mathcal I(m)|| - m_{n,-} +m_{n,+} + \mathrm{def}(\mathcal B) - (m_{n,+} - m_{n,-})\\
&= ||\mathcal I(m)|| + \mathrm{def}(\mathcal B),
\end{align*}
and so we are also done.
\end{proof}
\begin{remark}
For positive integers $k$ and $m$, there is a family $\mathcal B$ with $|\mathcal B| =m$ and $\mathrm{def}(\mathcal B) = km$ so that the inequality in Lemma \ref{lem_def} is tight --- we can take $\mathcal B = \{A\cup\{N,\dots,N+k-1\}: A\in \mathcal I(m)\}$, for $N$ a sufficiently large integer. For general $|\mathcal B|$ and $\mathrm{def}(\mathcal B)$, there is not always a family $\mathcal B$ so that the inequality is tight; for example if $|\mathcal B|=2$ and $\mathrm{def}(\mathcal B) = 3$, then in fact $||\mathcal B|| \le 3 = ||\mathcal I(2)|| + 2$.
\end{remark}
Together, Lemmas \ref{lem_no_falls} and \ref{lem_full_sh} show that if $\mathcal B$ has many bad sets then $||\mathcal B||$ is small. Indeed, if $\mathcal B$ has $b$ bad sets then either at least $b/2$ sets of $\mathcal B$ have $\delta B\subseteq \mathcal B$, or at least $b/2$ have $d_{\mathcal B}(B)=B$. By Lemma \ref{lem_full_sh} in the first case, and Lemma \ref{lem_no_falls} in the second,
\[
||\mathcal B||\le ||\mathcal I(|\mathcal B|)||+|\mathcal B|-b/2.
\]
Our aim now is to give a lower bound on the number of bad sets of $\mathcal B$. To do this, we shall focus on how the down-compression $d_{\mathcal B}$ affects the sets of a simply rooted family $\mathcal B$.
\begin{lemma}\label{lem_fall_b}
Let $\mathcal B$ be a simply rooted family, and let $B\in \mathcal B$ with $B-b\notin \mathcal B$ for some $b\in B$. Then $d_{\mathcal B}(B) \in \{B, B-b\}$.
\end{lemma}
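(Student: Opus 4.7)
By Part~\ref{it_one_fall} of Lemma~\ref{lem_rooted_basics} we have $|B \setminus d_{\mathcal B}(B)| \le 1$, so $d_{\mathcal B}(B)$ is either $B$ (in which case the statement is immediate) or of the form $B - \ell$ for a unique $\ell \in B$. I shall treat the latter case and show that necessarily $\ell = b$. As in the proof of that lemma, $\ell$ is the minimal index with $D_{(\mathcal B, \ell)}(B) \neq B$, and $B - \ell \notin D_{\ell - 1}(\mathcal B)$.

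Since $B - b \notin \mathcal B$ and $\mathcal B$ is simply rooted, $b$ must be the unique rooting element of $B$ in $\mathcal B$, so $[\{b\}, B] \subseteq \mathcal B$. I will first establish by induction on $k$ the invariant that for $k \le b$ the cube $[\{b\}, B]$ lies in $D_{k-1}(\mathcal B)$ and is fixed pointwise by $d_k$: for $C \in [\{b\}, B]$ with $k \in C$ and $k < b$, the set $C - k$ still contains $b$ and is a subset of $B$, so $C - k \in [\{b\}, B] \subseteq D_{k-1}(\mathcal B)$ by hypothesis, forcing $d_k$ to fix $C$. As a consequence, for every $j \in B$ with $j < b$ we have $B - j \in D_{j-1}(\mathcal B)$, so $B$ is not compressed before step $b$, and thus $\ell \ge b$.

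The harder part is to rule out $\ell > b$. Suppose for contradiction that $\ell > b$; then $B$ is not compressed at step $b$, so $B - b \in D_{b-1}(\mathcal B)$. Since $B - b \notin \mathcal B$, the set $B - b$ must have been created at some step $j \le b - 1$, that is $B - b = C' - j$ for some $C' \in D_{j-1}(\mathcal B)$ with $j \in C'$ and $B - b \notin D_{j-1}(\mathcal B)$. Because $D_{j-1}(\mathcal B)$ is simply rooted (Part~\ref{it_rooted} of Lemma~\ref{lem_Rei_basics}), $C'$ has a rooting element; this element must be $j$, since otherwise $C' - j = B - b$ would lie in $[\{i\}, C'] \subseteq D_{j-1}(\mathcal B)$ for any other rooting element $i$, contradicting $B - b \notin D_{j-1}(\mathcal B)$. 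Hence $[\{j\}, C'] \subseteq D_{j-1}(\mathcal B)$, and precisely the argument from the proof of Part~\ref{it_one_fall} of Lemma~\ref{lem_rooted_basics} yields $\mathcal P(B - b) \subseteq D_j(\mathcal B)$. Since $\mathcal P(B - b)$ is a down-set, it is preserved by every subsequent down-compression, so $\mathcal P(B - b) \subseteq D_{b-1}(\mathcal B)$.

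Combining $\mathcal P(B - b) \subseteq D_{b-1}(\mathcal B)$ with the invariant $[\{b\}, B] \subseteq D_{b-1}(\mathcal B)$ and the identity $\mathcal P(B) = \mathcal P(B - b) \cup [\{b\}, B]$, we obtain $\mathcal P(B) \subseteq D_{b-1}(\mathcal B)$; again being a down-set, $\mathcal P(B)$ remains inside $D_k(\mathcal B)$ for every $k \ge b - 1$. In particular $B - \ell \in \mathcal P(B) \subseteq D_{\ell - 1}(\mathcal B)$, contradicting $B - \ell \notin D_{\ell - 1}(\mathcal B)$. Hence $\ell = b$ and $d_{\mathcal B}(B) = B - b$. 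The main technical point is the \emph{shadow} observation that $\mathcal P(B - b)$ enters the family as soon as $B - b$ does, which is essentially a reuse of the key idea in the proof of Part~\ref{it_one_fall} of Lemma~\ref{lem_rooted_basics}.
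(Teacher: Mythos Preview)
Your proof is correct and follows essentially the same route as the paper's: both establish that $[\{b\},B]$ survives the first $b-1$ compressions, and then argue that if $B-b$ has appeared in $D_{b-1}(\mathcal B)$ then in fact all of $\mathcal P(B-b)$ has, forcing $\mathcal P(B)\subseteq D_{b-1}(\mathcal B)$ and hence $d_{\mathcal B}(B)=B$. The only differences are organisational: the paper does a direct case split on whether $B-b\in D_{b-1}(\mathcal B)$ and invokes Part~\ref{it_down-set} of Lemma~\ref{lem_rooted_basics} (applied to the preimage $B'\in\mathcal B$ of $B-b$) to get $\mathcal P(B-b)\subseteq D_{b-1}(\mathcal B)$ in one line, whereas you phrase it as showing $\ell=b$ and re-derive that powerset fact by hand at the intermediate step $j$.
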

\begin{proof}
Since $\mathcal B$ is a simply rooted family, for some $a\in B$ we have $[\{a\},B]\subseteq \mathcal B$. But $B-b\notin \mathcal B$, and so $a=b$. We now consider $D_{b-1}(\mathcal B)$, the family obtained by applying the compressions $d_1,\dots,d_{b-1}$ to $\mathcal B$, starting with $d_1$. We note that the cube $[\{b\},B]$ is fixed when we apply any down-compression $d_i$ with $i\neq b$ to $\mathcal B$; indeed, if $A\in [\{b\},B]$ then $A-i \in [\{b\},B]$, so $d_{(\mathcal B,i)}(A)=A$. Hence we have $[\{b\},B] \subseteq D_{b-1}(\mathcal B)$.

We now consider two cases. If $B-b \in D_{b-1}(\mathcal B)$ then it is $D_{(\mathcal B,b-1)}(B')$ for some $B'\in \mathcal B$. Hence $D_{(\mathcal B,b-1)}(B')\neq B'$, and so by Part \ref{it_down-set} of Lemma \ref{lem_rooted_basics} we have $\mathcal P(B-b)\subseteq D_{b-1}(\mathcal B)$. Since $[\{b\},B]\subseteq D_{b-1}(B)$, we then have $\mathcal P(B)\subseteq D_{b-1}(\mathcal B)$ and so $d_\mathcal B(B)=B$. On the other hand, if $B-b \notin D_{b-1}(\mathcal B)$ then $D_{(\mathcal B,b)}(B)= B-b$, and by Part \ref{it_one_fall} of Lemma \ref{lem_rooted_basics} we have $d_\mathcal B(B) = B-b$.
\end{proof}
In the next lemma, we show that if $\mathcal B'\subseteq \mathcal B$ are simply rooted families, then sets in $\mathcal B'$ which are fixed by $d_{\mathcal B'}$ are also fixed by $d_\mathcal B$.
\begin{lemma}\label{lem_smaller_falls}
Let $\mathcal B'\subseteq \mathcal B$ be simply rooted families, and let $B\in \mathcal B'$ with $d_{\mathcal B'}(B)=B$. Then $d_{\mathcal B}(B)=B$.
\end{lemma}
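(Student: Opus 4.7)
The plan is to prove a monotonicity property of the down-compression operation --- namely, that $\mathcal F' \subseteq \mathcal F$ implies $d_i(\mathcal F') \subseteq d_i(\mathcal F)$ for every $i \in [n]$ --- and then to apply this along the sequence of compressions $d_1, \dots, d_n$.

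For monotonicity, I would take any $X \in d_i(\mathcal F')$, say $X = d_{(i, \mathcal F')}(B)$ with $B \in \mathcal F'$, and perform a short case check. If $X = B$, then either $i \notin B$ or $B - i \in \mathcal F' \subseteq \mathcal F$, so $B$ is also fixed by $d_{(i, \mathcal F)}$ and $X \in d_i(\mathcal F)$. If $X = B - i$, then $i \in B$ and $B - i \notin \mathcal F'$; if additionally $B - i \notin \mathcal F$ then $d_{(i, \mathcal F)}(B) = B - i$, while if $B - i \in \mathcal F$ then $B - i$ (not containing $i$) is itself fixed by $d_{(i, \mathcal F)}$. Either way $X \in d_i(\mathcal F)$. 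Iterating this over successive coordinates gives $D_{k}(\mathcal B') \subseteq D_{k}(\mathcal B)$ for every $k \le n$.

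To deduce the lemma, I would use the characterization that $d_\mathcal B(B) = B$ holds exactly when $B - k \in D_{k-1}(\mathcal B)$ for every $k \in B$; this is precisely the condition required for $B$ to be fixed at each compression step, and it is verified by a direct induction on $k$ in either direction. The hypothesis $d_{\mathcal B'}(B) = B$ then yields $B - k \in D_{k-1}(\mathcal B')$ for each $k \in B$, and by the monotonicity above $B - k \in D_{k-1}(\mathcal B)$, so $d_\mathcal B(B) = B$ as required. The only real work lies in the monotonicity case analysis, which is routine and poses no genuine obstacle --- intuitively, enlarging the family can only add shadow sets that block compressions from firing, so any set fixed in a subfamily remains fixed in the larger family.
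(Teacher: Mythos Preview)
Your proof is correct and follows essentially the same approach as the paper: both establish the monotonicity $d_i(\mathcal F')\subseteq d_i(\mathcal F)$, iterate to obtain $D_k(\mathcal B')\subseteq D_k(\mathcal B)$, and then use the characterisation that $d_{\mathcal B}(B)=B$ precisely when $B-k\in D_{k-1}(\mathcal B)$ for every $k\in B$. The only difference is that you spell out the case analysis for monotonicity explicitly, whereas the paper simply asserts it.
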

\begin{proof}
By induction on $k$, it is easy to show that for all $1\le k \le n$ we have $D_k(\mathcal B') \subseteq D_k(\mathcal B)$. Indeed, if $\mathcal F'\subseteq \mathcal F\subseteq \mathcal P(n)$, then for any $i\in [n]$ we have $d_i(\mathcal F')\subseteq d_i(\mathcal F)$. Since $d_{\mathcal B'}(B)=B$, for all $k \in B$ we must have $B-k \in D_{k-1}(\mathcal B')$, and so also $B-k \in D_{k-1}(\mathcal B)$. This is exactly the condition we need to guarantee $d_{\mathcal B}(B)=B$.
\end{proof}
From the previous lemmas, we can read out a result on how the good sets of $\mathcal B$ behave under down-compressions $d_{\mathcal B'}$ for simply rooted families $\mathcal B'\subseteq \mathcal B$.
\begin{lemma}\label{lem_good_fall}
Let $\mathcal B'\subseteq \mathcal B$ be simply rooted families, with $B\in \mathcal B'$ a good set of $\mathcal B$. Then $d_\mathcal B(B) = d_{\mathcal B'}(B)$.
\end{lemma}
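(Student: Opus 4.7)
The plan is to combine the two preceding lemmas (\ref{lem_fall_b} and \ref{lem_smaller_falls}) with the precise content of ``good'' to pin down both $d_{\mathcal B}(B)$ and $d_{\mathcal B'}(B)$ as the same single-element reduction of $B$.

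First I would unpack the hypothesis that $B$ is good, i.e.\ not bad, which by definition means simultaneously that $\delta B \not\subseteq \mathcal B$ and $d_{\mathcal B}(B)\neq B$. The first of these yields some $b\in B$ with $B-b\notin \mathcal B$; since $\mathcal B'\subseteq \mathcal B$, we also have $B-b\notin \mathcal B'$. This puts us in exactly the situation of Lemma \ref{lem_fall_b}, applied separately to the two simply rooted families $\mathcal B$ and $\mathcal B'$, giving $d_{\mathcal B}(B)\in\{B,B-b\}$ and $d_{\mathcal B'}(B)\in\{B,B-b\}$.

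Next, I would use $d_{\mathcal B}(B)\neq B$ (the second half of ``good'') to force $d_{\mathcal B}(B)=B-b$. It then suffices to rule out the possibility $d_{\mathcal B'}(B)=B$. This is immediate from the contrapositive of Lemma \ref{lem_smaller_falls}: if we had $d_{\mathcal B'}(B)=B$, then that lemma would give $d_{\mathcal B}(B)=B$, contradicting goodness. Hence $d_{\mathcal B'}(B)=B-b=d_{\mathcal B}(B)$, as required.

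There isn't really a genuine obstacle here; the only thing to be careful about is to notice that the two conjuncts in the definition of ``good'' play complementary roles --- one produces the candidate coordinate $b$ via Lemma \ref{lem_fall_b}, while the other kills the fixed-point alternative in both $\mathcal B$ and $\mathcal B'$ via Lemma \ref{lem_smaller_falls}. Once those roles are identified, the proof is a two-line chase.
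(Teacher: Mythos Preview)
Your proposal is correct and follows essentially the same route as the paper: both use the definition of ``good'' to produce $b\in B$ with $B-b\notin\mathcal B$, invoke Lemma~\ref{lem_fall_b} to restrict the possible images to $\{B,B-b\}$, and use Lemma~\ref{lem_smaller_falls} (in contrapositive form) to eliminate the fixed-point option for $\mathcal B'$. If anything, your write-up is slightly more direct than the paper's, which detours through Part~\ref{it_one_fall} of Lemma~\ref{lem_rooted_basics} before appealing to Lemma~\ref{lem_fall_b}.
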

\begin{proof}
Since $B$ is a good set of $\mathcal B$, for some $b \in B$ we have $B-b\notin \mathcal B$, and by Part \ref{it_one_fall} of Lemma \ref{lem_rooted_basics} we have $d_\mathcal B(B) = B-a$ for some $a\in B$. So by Lemma \ref{lem_smaller_falls}, $d_{\mathcal B'}(B)\neq B$, and so $d_{\mathcal B'}(B) = B-c$ for some $c\in B$. But by Lemma \ref{lem_fall_b}, $a=c=b$, and the result holds.
\end{proof}

\subsection{Unions of simply rooted families}
Now we are in a position to prove a lemma about simply rooted families $\mathcal B$ which can be decomposed as the union of two other simply rooted families $\mathcal B_1 \cup \mathcal B_2$. Specifically, we show that if $\mathcal B_1$ and $\mathcal B_2$ have small intersection, but the down-sets $d(\mathcal B_1)$ and $d(\mathcal B_2)$ have large intersection, then $\mathcal B$ has many bad sets.

\begin{lemma}\label{lem_split_rooted}
Let $\mathcal B$, $\mathcal B_1$ and $\mathcal B_2$ be simply rooted families, with $\mathcal B_1 \cup \mathcal B_2 = \mathcal B$. Let $b$ be the number of bad sets of $\mathcal B$. Then
\[
|d(\mathcal B_1)\cap d(\mathcal B_2)| \le b + |\mathcal B_1 \cap \mathcal B_2|.
\]
\end{lemma}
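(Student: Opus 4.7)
The plan is to bound $|d(\mathcal B_1)\cap d(\mathcal B_2)|$ by constructing a map $\phi$ from $S := d(\mathcal B_1)\cap d(\mathcal B_2)$ into $\mathcal B$ whose image lies in the union of the bad sets of $\mathcal B$ with $\mathcal B_1\cap \mathcal B_2$, and whose multiplicities are controlled. The key tool is Lemma \ref{lem_good_fall}.

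Since each down-compression $d_i$ is a bijection between a family and its image (a quick case-check on the possible preimages $\{C,C+i\}$), so is $d_{\mathcal B_j}: \mathcal B_j \to d(\mathcal B_j)$ for $j=1,2$. For each $C\in S$, let $B_j^C$ denote the unique element of $\mathcal B_j$ with $d_{\mathcal B_j}(B_j^C)=C$. My first observation is that if both $B_1^C$ and $B_2^C$ are good sets of $\mathcal B$, then Lemma \ref{lem_good_fall} applied to the inclusions $\mathcal B_j \subseteq \mathcal B$ gives $d_\mathcal B(B_1^C) = d_{\mathcal B_1}(B_1^C) = C = d_{\mathcal B_2}(B_2^C) = d_\mathcal B(B_2^C)$; since $d_\mathcal B$ is itself a bijection on $\mathcal B$, this forces $B_1^C = B_2^C$. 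Contrapositively, whenever $B_1^C \neq B_2^C$ at least one of the two is a bad set of $\mathcal B$; and when $B_1^C = B_2^C$ the common set lies in $\mathcal B_1 \cap \mathcal B_2$.

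I then define $\phi(C) = B_1^C$ whenever either $B_1^C = B_2^C$ or $B_1^C$ is bad in $\mathcal B$, and $\phi(C) = B_2^C$ otherwise (in which case $B_2^C$ must be bad, by the previous paragraph). The image of $\phi$ lies in the union of the bad sets of $\mathcal B$ with $\mathcal B_1\cap \mathcal B_2$. The crucial multiplicity analysis: if $\phi(C) = \phi(C') = B^*$ with $C\neq C'$, then $\phi$ cannot have picked the same side for both $C$ and $C'$ (that would give $B_1^C = B_1^{C'}$ or $B_2^C = B_2^{C'}$, contradicting bijectivity of $d_{\mathcal B_j}$). So the two $C$'s used different sides, which forces $B^* \in \mathcal B_1\cap \mathcal B_2$, and the use of side $2$ for one of them further forces $B^*$ to be bad. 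One also checks, via Lemma \ref{lem_good_fall}, that if $B^* \in \mathcal B_1\cap \mathcal B_2$ is good then $d_{\mathcal B_1}(B^*) = d_\mathcal B(B^*) = d_{\mathcal B_2}(B^*)$, so no collision at such a $B^*$ occurs.

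Writing $b$ for the number of bad sets of $\mathcal B$, $c = |\mathcal B_1 \cap \mathcal B_2|$, and $i$ for the number of bad sets lying in $\mathcal B_1\cap \mathcal B_2$, the preimage bounds yield
\[
|S| \le (b - i) + (c - i) + 2i = b + c,
\]
as required. The main obstacle is the combinatorial bookkeeping: naive chargings give only $|S| \le 2b + c$, and the saving of a factor two on the intersection comes from the tie-breaking rule in $\phi$, which packages all double-counted preimages into the single ``both bad and in $\mathcal B_1\cap \mathcal B_2$'' bucket where the $2i$ overcount is absorbed by $-2i$ coming from double-counting bad sets and intersection sets.
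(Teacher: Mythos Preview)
Your proof is correct and follows essentially the same approach as the paper: both use Lemma~\ref{lem_good_fall} to show that if the two preimages $B_1^C,B_2^C$ are good in $\mathcal B$ then they coincide in $\mathcal B_1\cap\mathcal B_2$, and then count. The paper's bookkeeping is slightly more direct---it simply observes that each $C\in d(\mathcal B_1)\cap d(\mathcal B_2)$ is either the $d_{\mathcal B_1}$-image of a good set in $\mathcal B_1\cap\mathcal B_2$, or the $d_{\mathcal B_j}$-image of a bad set in $\mathcal B_j$ for some $j$, giving the bound (good in $\mathcal B_1\cap\mathcal B_2$) $+$ (bad in $\mathcal B_1$) $+$ (bad in $\mathcal B_2$) $= b+|\mathcal B_1\cap\mathcal B_2|$ without needing an explicit tie-breaking rule---but your construction of $\phi$ achieves the same thing.
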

\begin{proof}
Let $B$ be a set in $d(\mathcal B_1)\cap d(\mathcal B_2)$. Then $B= d_{\mathcal B_1}(B_1)= d_{\mathcal B_2}(B_2)$, for some $B_1\in \mathcal B_1$ and $B_2\in \mathcal B_2$. If both $B_1$ and $B_2$ are good sets of $\mathcal B$ then, applying Lemma \ref{lem_good_fall},
\[
d_{\mathcal B}(B_1) = d_{\mathcal B_1}(B_1)= d_{\mathcal B_2}(B_2)=d_{\mathcal B}(B_2).
\]
But $d_{\mathcal B}:\mathcal B\to d(\mathcal B)$ is injective, and so $B_1=B_2 \in \mathcal B_1 \cap \mathcal B_2$. On the other hand, if $B_1$ and $B_2$ are not both good sets of $\mathcal B$, $B$ is the $d_{\mathcal B_1}$ image of a bad set of $\mathcal B$ in $\mathcal B_1$, or the $d_{\mathcal B_2}$ image of a bad set of $\mathcal B$ in $\mathcal B_2$. Hence the number of sets in $d(\mathcal B_1)\cap d(\mathcal B_2)$ is at most the number of good sets of $\mathcal B$ in $\mathcal B_1 \cap \mathcal B_2$, plus the number of bad sets of $\mathcal B$ in $\mathcal B_1$, plus the number of bad sets of $\mathcal B$ in $\mathcal B_2$ --- which is precisely $b + |\mathcal B_1 \cap \mathcal B_2|$.\end{proof}
This result has an immediate corollary using Harris's Lemma \cite{Har}, which states that down-sets in the cube are positively correlated. Precisely, if $\mathcal D_1$ and $\mathcal D_2$ are down-sets in $\mathcal P(n)$, then $|\mathcal D_1\cap \mathcal D_2|\ge 2^{-n}|\mathcal D_1||\mathcal D_2|$. Applying this to the down-sets $d(\mathcal B_1)$ and $d(\mathcal B_2)$, and using the fact that $|d(\mathcal B_i)|=|\mathcal B_i|$ for $i=1$ and $2$, we get the following result.
\begin{corollary}\label{cor_lower_b}
Let $\mathcal B$, $\mathcal B_1$ and $\mathcal B_2$ be simply rooted families, with $\mathcal B_1 \cup \mathcal B_2 = \mathcal B$. Let $b$ be the number of bad sets of $\mathcal B$. Then
\[
2^{-n}|\mathcal B_1| |\mathcal B_2| \le b + |\mathcal B_1 \cap \mathcal B_2|.
\]
\qed
\end{corollary}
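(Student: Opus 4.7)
The plan is to combine Lemma \ref{lem_split_rooted} with Harris's Lemma on positive correlation of down-sets in $\mathcal P(n)$. Lemma \ref{lem_split_rooted} already delivers the upper bound
\[
|d(\mathcal B_1) \cap d(\mathcal B_2)| \le b + |\mathcal B_1 \cap \mathcal B_2|,
\]
so the only remaining task is to produce a matching lower bound on $|d(\mathcal B_1) \cap d(\mathcal B_2)|$ in terms of $|\mathcal B_1|$ and $|\mathcal B_2|$.

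First I would invoke Part \ref{it_end_down} of Lemma \ref{lem_Rei_basics}: since $\mathcal B_1$ and $\mathcal B_2$ are simply rooted by hypothesis, the families $d(\mathcal B_1)$ and $d(\mathcal B_2)$ are down-sets in $\mathcal P(n)$. Harris's Lemma then gives
\[
|d(\mathcal B_1) \cap d(\mathcal B_2)| \ge 2^{-n}\,|d(\mathcal B_1)|\,|d(\mathcal B_2)|.
\]

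Next I would observe that each individual compression $d_i$ is cardinality-preserving — it replaces $B$ by $B-i$ only when $B-i \notin \mathcal F$, so no two distinct sets collide — and hence iterating, $|d(\mathcal B_j)| = |\mathcal B_j|$ for $j=1,2$. Chaining the two displayed inequalities yields the claimed bound $2^{-n}|\mathcal B_1||\mathcal B_2| \le b + |\mathcal B_1 \cap \mathcal B_2|$. There is essentially no obstacle: the substantive content has already been absorbed into Lemma \ref{lem_split_rooted}, and the corollary is a one-line deduction from a standard correlation inequality together with the trivial observation that down-compressions do not change the size of a family.
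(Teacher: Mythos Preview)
Your proposal is correct and follows essentially the same route as the paper: apply Harris's Lemma to the down-sets $d(\mathcal B_1)$ and $d(\mathcal B_2)$ (which are down-sets by Lemma~\ref{lem_Rei_basics}), use that down-compressions preserve cardinality so $|d(\mathcal B_i)|=|\mathcal B_i|$, and combine with the upper bound from Lemma~\ref{lem_split_rooted}. The paper states this deduction in the paragraph immediately preceding the corollary, in almost identical terms.
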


Next we shall choose simply rooted families $\mathcal B_1$ and $\mathcal B_2$ to which we can apply this result to give a lower bound on the number of bad sets in $\mathcal B$. Recall that for a set $S\subseteq [n]$ and a simply rooted family $\mathcal B$, $\mathcal B_S$ is the family consisting those elements of $B$ which are $\mathcal B$-rooted at some element of $S$. We note that $\mathcal B_S$ is a simply rooted family; if $B$ is $\mathcal B$-rooted at $s\in S$, every set of $[\{s\},B]$ is $\mathcal B$-rooted at $s$ and hence is in $\mathcal B_S$, so $B$ is $\mathcal B_S$-rooted at $s$. We restate Corollary \ref{cor_lower_b} for these simply rooted families.
\begin{lemma}\label{lem_many_bad}
Let $(S,T)$ be a partition of $[n]$ into two disjoint sets, and let $\mathcal B$ be a simply rooted family in $\mathcal P(n)$. Let $b_1$ be the number of sets $B\in \mathcal B\setminus (\mathcal B_S\cap\mathcal B_T)$ with $\delta B\subseteq \mathcal B$, let $b_2 = |\mathcal B_S \cap \mathcal B_T|$, and let $b_3$ be the number of sets $B\in \mathcal B$ with $d_{\mathcal B}(B)=B$. Then
\[
 2^{-n}|\mathcal B_S||\mathcal B_T|\le b_1+2b_2+b_3.
\]
\end{lemma}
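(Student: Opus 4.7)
The plan is to apply Corollary \ref{cor_lower_b} directly with $\mathcal B_1 = \mathcal B_S$ and $\mathcal B_2 = \mathcal B_T$, and then translate the ``number of bad sets'' bound it produces into the quantities $b_1, b_2, b_3$.

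First I would verify the hypotheses of Corollary \ref{cor_lower_b}. Since $\mathcal B$ is simply rooted, every $B\in \mathcal B$ is $\mathcal B$-rooted at some $i\in [n]$, and as $(S,T)$ partitions $[n]$ we have $\mathcal B_S\cup \mathcal B_T = \mathcal B$. Moreover, as noted in the paragraph preceding the lemma, $\mathcal B_S$ and $\mathcal B_T$ are each simply rooted families in their own right. Corollary \ref{cor_lower_b} then yields
\[
 2^{-n}|\mathcal B_S||\mathcal B_T| \le b + |\mathcal B_S \cap \mathcal B_T| = b + b_2,
\]
where $b$ is the total number of bad sets of $\mathcal B$.

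The remaining step is to bound $b$ by $b_1 + b_2 + b_3$. Recall that a bad set satisfies either $\delta B \subseteq \mathcal B$ or $d_{\mathcal B}(B)=B$, so I would split the bad sets into those two (possibly overlapping) types and use a union bound. The sets with $d_{\mathcal B}(B)=B$ contribute at most $b_3$. For the sets with $\delta B\subseteq \mathcal B$, I would split them according to whether they lie in $\mathcal B_S\cap \mathcal B_T$ or not: those outside contribute exactly $b_1$ by definition, while those inside contribute at most $|\mathcal B_S\cap \mathcal B_T|=b_2$. Thus $b \le b_1 + b_2 + b_3$, and substituting back gives
\[
 2^{-n}|\mathcal B_S||\mathcal B_T| \le b_1 + 2b_2 + b_3,
\]
as required.

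There is no real obstacle here; the argument is essentially a bookkeeping reformulation of Corollary \ref{cor_lower_b}. The only point that needs a small check is that $\mathcal B_S\cup \mathcal B_T$ really exhausts $\mathcal B$, which uses both the simply rooted hypothesis on $\mathcal B$ and the fact that $(S,T)$ is a \emph{partition} of $[n]$ rather than merely a pair of disjoint subsets.
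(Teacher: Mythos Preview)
Your proof is correct and essentially identical to the paper's own argument: both verify that $\mathcal B_S$ and $\mathcal B_T$ are simply rooted with $\mathcal B_S\cup\mathcal B_T=\mathcal B$, apply Corollary~\ref{cor_lower_b}, and then bound the number of bad sets by $b_1+b_2+b_3$ via a union bound (the paper partitions bad sets according to membership in $\mathcal B_S\cap\mathcal B_T$, you partition according to which bad-set criterion is met, but the effect is the same).
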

\begin{proof}
Note that since $\mathcal B$ is a simply rooted family, $\mathcal B_S$ and $\mathcal B_T$ are simply rooted families and $\mathcal B_S\cup \mathcal B_T = \mathcal B$. Also, there are at most $b_1+b_3$ bad sets of $\mathcal B$ not in $\mathcal B_S \cap \mathcal B_T$, and at most $b_2$ in $\mathcal B_S\cap \mathcal B_T$. Hence the total number of bad sets of $\mathcal B$ is at most $b_1+b_2+b_3$, and so by Corollary \ref{cor_lower_b} we have
\[
2^{-n}|\mathcal B_S||\mathcal B_T| \le b_1+b_2+b_3 + |\mathcal B_S\cap \mathcal B_T| = b_1+2b_2+b_3,
\]
as required.
\end{proof}

We note that in fact every set in $\mathcal B_S \cap \mathcal B_T$ is bad --- indeed, any set $B$ which is $\mathcal B$-rooted at two distinct integers has $\delta B \subseteq \mathcal B$. Hence this result gives us a lower bound on the number of bad sets in $\mathcal B$.

To use Lemma \ref{lem_many_bad} to prove Theorem \ref{thm_stability}, we shall pick $S$ and $T$ to make $|\mathcal B_S||\mathcal B_T|$ large. In general, we cannot do well; if, for example, $m \le 2^{n-1}$ and $\mathcal B$ is $\{B+n: B\in \mathcal I(m)\}$, then for any partition $[n]=S\cup T$ one of $\mathcal B_S$ and $\mathcal B_T$ is empty --- which is as we expect, because this family has no bad sets. However, if $\mathcal B_{\{i\}}$ --- that is, the family of sets of $\mathcal B$ which are $\mathcal B$-rooted at $i$ --- is not too large for any $i$, we can easily choose $S$ and $T$ to make $|\mathcal B_S||\mathcal B_T|$ large.
\begin{lemma}\label{lem_large_product}
Let $\mathcal B$ be a simply rooted family in $\mathcal P(n)$ with $|\mathcal B| = m$. Suppose that no $i \in [n]$ has $|\mathcal B_{\{i\}}| > p m$. Then there exists a partition $[n]=S\cup T$ such that
\[
|\mathcal B_S||\mathcal B_T| \ge m^2(1/4 - p^2/4).
\]
\end{lemma}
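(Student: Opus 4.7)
The plan is a discrete intermediate-value argument applied to a sliding partition. Fix any enumeration $i_1,\ldots,i_n$ of $[n]$ and, for $0 \le k \le n$, set $S_k = \{i_1,\ldots,i_k\}$, $T_k = [n]\setminus S_k$, and $v_k = |\mathcal B_{S_k}|$. Then $v_0 = 0$ and $v_n$ equals the number of non-empty sets of $\mathcal B$; since removing $\emptyset$ from $\mathcal B$ preserves the simply-rooted property, I would assume $\emptyset \notin \mathcal B$, so $v_n = m$. The key point is that if a set $B$ is counted by $v_k$ but not by $v_{k-1}$, then $B$ is $\mathcal B$-rooted at $i_k$, giving the jump bound $v_k - v_{k-1} \le |\mathcal B_{\{i_k\}}| \le pm$.

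Since the sequence $(v_k)$ starts at $0$, ends at $m$, and moves in steps of size at most $pm$, there must exist some $k^*$ with $|v_{k^*} - m/2| \le pm/2$: pick $k$ with $v_{k} \le m/2 \le v_{k+1}$, and since $v_{k+1} - v_k \le pm$, at least one of them lies within $pm/2$ of $m/2$. Take $S = S_{k^*}$ and $T = T_{k^*}$, so that $|\mathcal B_S| = v_{k^*}$.

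For the other side, every non-empty $B\in\mathcal B$ is $\mathcal B$-rooted at some element of $[n]$, so any $B\in\mathcal B\setminus\mathcal B_S$ must be $\mathcal B$-rooted in $T$, giving $|\mathcal B_T| \ge m - v_{k^*}$. Combining these bounds,
\[
 |\mathcal B_S||\mathcal B_T| \ge v_{k^*}(m - v_{k^*}) = \frac{m^2}{4} - \left(v_{k^*} - \frac{m}{2}\right)^2 \ge \frac{m^2}{4} - \frac{p^2 m^2}{4} = m^2\!\left(\frac14 - \frac{p^2}{4}\right),
\]
as required.

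The only genuinely non-routine step is the intermediate-value observation, and its strength is that it extracts a quadratic gain of size $p^2m^2/4$ from a merely linear jump bound of $pm$; this is precisely the symmetry one exploits in $v(m-v) = m^2/4 - (v - m/2)^2$. Minor bookkeeping obstacles are the ordering choice (any order works, since the argument is entirely about the range and jump size of $(v_k)$) and the small adjustment to $v_n$ if $\emptyset \in \mathcal B$, which only affects lower-order terms.
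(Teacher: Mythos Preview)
Your proof is correct and follows essentially the same idea as the paper: both exploit that changing $S$ by one element alters $|\mathcal B_S|$ by at most $pm$, and use this to land $|\mathcal B_S|$ within $pm/2$ of $m/2$. The paper phrases this as an extremal argument (take the partition maximising $\min(|\mathcal B_S|,|\mathcal B_T|)$ and move an element if the minimum is still below $m(1/2-p/2)$), whereas you give the equivalent discrete intermediate-value construction along a fixed chain of partitions; the two are interchangeable, and your version is arguably more transparent. The $\emptyset$ caveat you flag is indeed present in the paper's proof as well and is harmless in the intended application.
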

\begin{proof}
Take the partition $[n]=S\cup T$ where the smaller of $|\mathcal B_S|$ and $|\mathcal B_T|$ is as large as possible --- without loss of generality $|\mathcal B_S|\le |\mathcal B_T|$. If $|\mathcal B_S|< m(1/2-p/2)$, we can move an element $t$ of $T$ to $S$ such that $\min(\mathcal B_S,\mathcal B_T)$ increases, a contradiction. Since $|\mathcal B_S|+|\mathcal B_T|\ge m$, $|\mathcal B_S||\mathcal B_T| \ge (m/2-p/2)(m/2+p/2)=m^2(1/4 - p^2/4)$.
\end{proof}
We are now ready to prove Theorem \ref{thm_stability}. Let $\mathcal B$ be a simply rooted family in $\mathcal P(n)$ with $|\mathcal B| = m$, such that no $i \in [n]$ has $|\mathcal B_{\{i\}}| > p m$. By Lemma \ref{lem_large_product}, there exists a partition $[n]=S\cup T$ such that $|\mathcal B_S||\mathcal B_T|\ge m^2(1/4 - p^2/4)$. We let $b_1$ be the number of sets $B\in \mathcal B\setminus (\mathcal B_S\cap\mathcal B_T)$ with $\delta B\subseteq \mathcal B$, $b_2 = |\mathcal B_S \cap \mathcal B_T|$, and $b_3$ be the number of sets $B\in \mathcal B$ with $d_{\mathcal B}(B)=B$. Then from Lemma \ref{lem_many_bad} we have
\[
2^{-n}m^2(1/4 - p^2/4)\le 2^{-n}|\mathcal B_S||\mathcal B_T| \le b_1+2b_2+b_3,
\]
and so either $b_1+b_2\ge m^2(1/12 - p^2/12)/2^n$ or $b_3\ge m^2(1/12 - p^2/12)/2^n$. In the first case, since $b_1+b_2$ is the number of sets in $\mathcal B$ with $\delta B\subseteq \mathcal B$, by Lemma \ref{lem_full_sh} we have $||\mathcal B||\le ||\mathcal I(m)|| + m - m^2(1/12 - p^2/12)/2^n$. In the second case, from Lemma \ref{lem_no_falls} we also have $||\mathcal B||\le ||\mathcal I(m)|| + m - m^2(1/12 - p^2/12)/2^n$, as required.\hfill\qed

\subsection{Proof of Theorem \ref{thm_main}}
We now prove Theorem \ref{thm_main} from Theorem \ref{thm_stability}, giving us a tighter restriction than Theorem \ref{thm_colex_bound} on (hypothetical) counterexamples to the union-closed conjecture. Let $\mathcal A$ be such a counterexample, with $\mathcal B=\mathcal P(n)\setminus \mathcal A$ and $|\mathcal B| = m$. Our task is to show that $||\mathcal I(m)||>m(n/2-1+c_1)$, for some universal constant $c_1$. If $||\mathcal B_{\{i\}}||$ is small for all $i$, we shall prove this using Theorem \ref{thm_stability}, since if $\mathcal A$ is a counterexample to the union-closed conjecture we have $mn/2 \le ||\mathcal B_{\{i\}}||$. To complete the proof of Theorem \ref{thm_main}, we shall show that if $|\mathcal B_{\{i\}}|$ is large for some $i$ then $||\mathcal I(m)||$ is large. For this, we use the following simple observation.
\begin{lemma}\label{lem_low_degrees}
Let $\mathcal A\subseteq \mathcal P(n)$ be a counterexample to the union-closed conjecture, let $\mathcal B = \mathcal P(n)\setminus \mathcal A$, and let $p\in [0,1/2]$. If some element of $[n]$ is in $m(1/2+p)$ sets of $\mathcal B$ then
\[
 ||\mathcal I(m)|| > m(n/2 - 1 + p).
\]
\end{lemma}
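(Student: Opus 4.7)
The plan is a direct degree count combined with inequality~(\ref{eq_rooted_bound}); the only real input from the counterexample hypothesis is a uniform lower bound on degrees in $\mathcal B$. First I would observe that since $\mathcal A$ is a counterexample, $\mathrm{deg}_{\mathcal A}(j) < |\mathcal A|/2 = (2^n - m)/2$ for every $j \in [n]$. Using $\mathrm{deg}_{\mathcal A}(j) + \mathrm{deg}_{\mathcal B}(j) = 2^{n-1}$, this translates to the strict lower bound $\mathrm{deg}_{\mathcal B}(j) > m/2$ for every $j \in [n]$.

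Next I would combine this universal bound with the hypothesis that the distinguished element $i$ satisfies $\mathrm{deg}_{\mathcal B}(i) \ge m(1/2 + p)$. Summing degrees gives
\[
||\mathcal B|| = \sum_{j \in [n]} \mathrm{deg}_{\mathcal B}(j) > m(1/2 + p) + (n-1)\cdot m/2 = nm/2 + mp.
\]
To finish, I would apply Observation~\ref{obs_rooted} to see that $\mathcal B$ is simply rooted, and then invoke inequality~(\ref{eq_rooted_bound}), which gives $||\mathcal B|| \le ||\mathcal I(m)|| + m$. Rearranging yields $||\mathcal I(m)|| > nm/2 + mp - m = m(n/2 - 1 + p)$, as required.

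There is no serious obstacle; the argument is a one-line pigeonhole-style estimate. Its role in the larger proof is simply to dispose of the case left over by Theorem~\ref{thm_stability}: that theorem handles $\mathcal B$ with every $|\mathcal B_{\{i\}}|$ small, while the present lemma cheaply absorbs the complementary regime where some $\mathrm{deg}_{\mathcal B}(i)$ is large via a trivial degree sum.
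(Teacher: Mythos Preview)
Your proof is correct and follows exactly the paper's approach: bound $||\mathcal B||$ from below via the degree sum (using $\mathrm{deg}_{\mathcal B}(j)>m/2$ for all $j$ together with the one boosted degree), and from above via inequality~(\ref{eq_rooted_bound}). The only difference is that you spell out the derivation of $\mathrm{deg}_{\mathcal B}(j)>m/2$ from the counterexample assumption, which the paper's proof simply asserts.
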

\begin{proof}
From Equation \eqref{eq_rooted_bound}, we have $||\mathcal B|| \le ||\mathcal I(m)|| + m$. Here, since every element of $[n]$ is in more than $m/2$ sets of $\mathcal B$, we must also have $||\mathcal B|| > (n-1)m/2 + m(1/2+p)$, and the result follows.
\end{proof}
Now we can show that if many sets of $\mathcal B$ are $\mathcal B$-rooted at the same $i\in [n]$ then Theorem \ref{thm_main} holds. We shall use Lemma \ref{lem_low_degrees}, and also Theorem 19 of \cite{BaBoEc}, which we state in a slightly different form.
\begin{theorem}\label{thm_down-set}
Let $\mathcal B\subseteq \mathcal P(n)$ be a simply rooted family with $|\mathcal B|=m$. Suppose the largest down-set contained in $\mathcal B$ is $\mathcal D$. Then $||\mathcal B||\le ||\mathcal I(m)|| + m - |\mathcal D|$.\qed
\end{theorem}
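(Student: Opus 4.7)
The plan is to combine Lemma~\ref{lem_no_falls} with the observation that every set of $\mathcal D$ must be a fixed point of the down-compression $d_{\mathcal B}$. Write $m'$ for the number of $B\in \mathcal B$ with $d_{\mathcal B}(B)=B$. Lemma~\ref{lem_no_falls} already gives $||\mathcal B||\le ||\mathcal I(m)||+m-m'$, so it suffices to prove the inequality $m' \ge |\mathcal D|$.

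The key assertion to establish is: if $B$ is a set with $\mathcal P(B)\subseteq \mathcal B$, then $d_{\mathcal B}(B)=B$. I would prove this via the stronger inductive statement that $\mathcal P(B)\subseteq D_k(\mathcal B)$ for every $0\le k\le n$. The base case $k=0$ is the hypothesis. For the inductive step, pick any $A\in \mathcal P(B)$; either $k\notin A$ (in which case $d_{(k,D_{k-1}(\mathcal B))}$ fixes $A$ by definition), or $k\in A$ and then $A-k\in \mathcal P(B)\subseteq D_{k-1}(\mathcal B)$ by the inductive hypothesis, so again $d_{(k,D_{k-1}(\mathcal B))}(A)=A$. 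Since the compression is injective on $D_{k-1}(\mathcal B)$, this gives $A\in D_k(\mathcal B)$. Taking $A=B$ at each step shows moreover that $D_{(\mathcal B,k)}(B)=B$ for every $k$, and hence $d_{\mathcal B}(B)=B$.

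Applying this assertion to each $B\in \mathcal D$, for which $\mathcal P(B)\subseteq \mathcal D\subseteq \mathcal B$, yields $m'\ge |\mathcal D|$, and Lemma~\ref{lem_no_falls} finishes the proof. I do not anticipate a real obstacle: the heart of the argument is simply that a cube $\mathcal P(B)$ sitting inside $\mathcal B$ is preserved by every down-compression, so it persists inside $d(\mathcal B)$ and contributes $|\mathcal D|$ to the tally of fixed points of $d_{\mathcal B}$, strengthening the bound of Lemma~\ref{lem_no_falls} by exactly the required amount.
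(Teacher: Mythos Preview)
Your proof is correct, but it takes a slightly different route from the paper. The paper deduces the theorem from Lemma~\ref{lem_full_sh} rather than Lemma~\ref{lem_no_falls}: since $\mathcal D$ is a down-set, every $B\in\mathcal D$ has $\delta B\subseteq\mathcal D\subseteq\mathcal B$, so the count $m'$ in Lemma~\ref{lem_full_sh} satisfies $m'\ge|\mathcal D|$ immediately. You instead show that every $B\in\mathcal D$ is a fixed point of $d_{\mathcal B}$, by the clean induction that the cube $\mathcal P(B)$ survives each $d_k$, and then invoke Lemma~\ref{lem_no_falls}. Both arguments exploit one of the two ``bad set'' criteria; the paper's choice needs only the one-line observation about shadows, while yours requires the short inductive verification that a full subcube is preserved under down-compressions. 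Either route gives the same bound, and your argument has the minor bonus of making explicit the general fact that any down-set contained in $\mathcal B$ is pointwise fixed by $d_{\mathcal B}$.
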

In fact, this theorem is an immediate consequence of Lemma \ref{lem_full_sh}, since every $B\in \mathcal D$ has $\delta B \subseteq \mathcal B$.
\begin{lemma}\label{lem_few_with_root}
Suppose that $\mathcal A\subseteq \mathcal P(n)$ is a counterexample to the union-closed conjecture, let $\mathcal B = \mathcal P(n)\setminus \mathcal A$, and let $p\in [0,1]$. If $|\mathcal B_{\{i\}}| \ge 3p m$ for some $i \in [n]$, then
\[
 ||\mathcal I(m)|| > m(n/2 - 1 + p).
\]
\end{lemma}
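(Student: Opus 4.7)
My plan splits on the degree of $i$ in $\mathcal B$. If $\deg_{\mathcal B}(i)\ge m(1/2+p)$, Lemma \ref{lem_low_degrees} immediately gives $\|\mathcal I(m)\|>m(n/2-1+p)$. Otherwise $\deg_{\mathcal B}(i)<m(1/2+p)$, and I will extract roughly $2pm$ bad sets from $\mathcal B_{\{i\}}$; combined with the bad-set refinement $\|\mathcal B\|\le\|\mathcal I(m)\|+m-b/2$ described in the paragraph following Lemma \ref{lem_fall_b}, together with $\|\mathcal B\|>nm/2$ (which holds because $\mathcal A$ is a counterexample, forcing $\deg_{\mathcal B}(j)>m/2$ for every $j$), this yields the desired inequality.

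To locate bad sets, I partition $\mathcal B_{\{i\}}=\mathcal Y\cup\mathcal Z_1\cup\mathcal Z_2$: let $\mathcal Y=\{B\in\mathcal B_{\{i\}}:B-i\in\mathcal B\}$, and for $B\in\mathcal B_{\{i\}}\setminus\mathcal Y$ (so $B-i\notin\mathcal B$) apply Lemma \ref{lem_fall_b} with $b=i$ to conclude $d_{\mathcal B}(B)\in\{B,B-i\}$; write $\mathcal Z_1$ for the preimage of $B$ and $\mathcal Z_2$ for that of $B-i$. Every $B\in\mathcal Y$ has $\delta B\subseteq\mathcal B$ (for $j\in B$ with $j\ne i$ the set $B-j$ lies in $[\{i\},B]\subseteq\mathcal B$, and $B-i\in\mathcal B$ by definition of $\mathcal Y$), so is bad; every $B\in\mathcal Z_1$ is bad because $d_{\mathcal B}(B)=B$. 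Since $|\mathcal Y|+|\mathcal Z_1|+|\mathcal Z_2|\ge 3pm$, either $|\mathcal Y|+|\mathcal Z_1|\ge 2pm$, in which case $\mathcal B$ has at least $2pm$ bad sets and the argument is complete, or $|\mathcal Z_2|\ge pm$.

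The case $|\mathcal Z_2|\ge pm$ is the main obstacle, since sets of $\mathcal Z_2$ are not themselves bad. Here Part 1 of Lemma \ref{lem_rooted_basics} places $\mathcal P(B-i)\subseteq D_i(\mathcal B)$ for every $B\in\mathcal Z_2$, exhibiting a down-set of size at least $pm$ inside the simply rooted family $D_i(\mathcal B)$. I would close the argument by applying Theorem \ref{thm_down-set} to $D_i(\mathcal B)$, giving $\|D_i(\mathcal B)\|\le\|\mathcal I(m)\|+m-pm$, and carefully controlling the total size loss $\|\mathcal B\|-\|D_i(\mathcal B)\|$ accumulated through the compressions $d_1,\ldots,d_i$ (bounded via Part 2 of Lemma \ref{lem_rooted_basics}, by which each set of $\mathcal B$ contributes at most one lost element to the entire compression $d_{\mathcal B}$) so that the per-direction degree lower bound $\deg_{\mathcal B}(j)>m/2$ compensates for the loss; the factor $3$ in the hypothesis $|\mathcal B_{\{i\}}|\ge 3pm$ is precisely what lets all three subcases each contribute the required $pm$ improvement.
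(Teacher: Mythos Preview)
Your first case (high degree of $i$) coincides with the paper's, and your second case---locating at least $2pm$ bad sets inside $\mathcal Y\cup\mathcal Z_1$ and invoking the $b/2$ bound---is a legitimate shortcut. The gap is the third case $|\mathcal Z_2|\ge pm$. Applying Theorem~\ref{thm_down-set} to $D_i(\mathcal B)$ gives $\|D_i(\mathcal B)\|\le\|\mathcal I(m)\|+m-pm$, but to pass to $\|\mathcal B\|$ you must add back $L=\|\mathcal B\|-\|D_i(\mathcal B)\|$, the number of sets that move under $d_1,\ldots,d_i$. Nothing you cite controls $L$ from above: the condition $\deg_{\mathcal B}(j)>m/2$ only bounds $\|\mathcal B\|$ from below and says nothing about how many sets move in the first $i$ compressions. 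Trying to absorb $L$ by enlarging your down-set in $D_i(\mathcal B)$ to include the images of all moved sets (each of which sits in a full cube by Lemma~\ref{lem_rooted_basics}, Part~\ref{it_down-set}) only recovers the trivial bound $\|\mathcal B\|\le\|\mathcal I(m)\|+m$, since the resulting gain of at least $L$ exactly cancels the $+L$ you added. As written, the phrase ``the degree lower bound compensates for the loss'' is not an argument.

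The paper avoids compression entirely here: it applies Theorem~\ref{thm_down-set} to the slice $\mathcal B_i^+=\{B-i:B\in\mathcal B,\ i\in B\}$, which is itself simply rooted in $\mathcal P([n]\setminus\{i\})$ and contains the \emph{whole} down-set $\{B-i:B\in\mathcal B_{\{i\}}\}$ of size at least $3pm$ (not merely the $\mathcal Z_2$ portion). This yields $\|\mathcal B_i^+\|\le\|\mathcal I(m_{i,+})\|+m_{i,+}-3pm$; combining with the basic bound $\|\mathcal B_i^-\|\le\|\mathcal I(m_{i,-})\|+m_{i,-}$ and Lemma~\ref{lem_colex_sums}, and using $m_{i,+}-m_{i,-}\le 2pm$ from the assumed degree cap, one gets $\|\mathcal B\|\le\|\mathcal I(m)\|+m-pm$ directly. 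The factor~$3$ in the hypothesis is consumed as $3pm-(m_{i,+}-m_{i,-})\ge pm$, not via your three-way split of $\mathcal B_{\{i\}}$.
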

\begin{proof}
 We may assume $i=n$. We define
\begin{align*}
\mathcal B^+_n &= \{B\subseteq \mathcal P(n-1):B + n\in \mathcal B\},\\
\mathcal B^-_n &= \{B\subseteq \mathcal P(n-1): B\in \mathcal B\}.
\end{align*}
Also, define $m_{n,+} = |\mathcal B^+_n|$, and $m_{n,-} = |\mathcal B^-_n|$. Since $\mathcal A$ is a counterexample to the union-closed conjecture, $m_{n,+}> m_{n,-}$. If $m_{n,+}>m(1/2+p)$, we are done by Lemma \ref{lem_low_degrees}, so we may assume that $m_{n,+}\le m(1/2+p)$, and hence $m_{n,+}-m_{n,-}\le 2pm$. Then, setting $D_+$ to be the largest down-set contained in $\mathcal B_n^+$, we have $\{B-n: [\{n\},B]\subseteq \mathcal B\}\subseteq \mathcal D_+$, and so $|\mathcal D_+|\ge 3pm \ge m_{n,+}-m_{n,-}+pm$. Applying Theorem \ref{thm_down-set} to $\mathcal B^+_n$ now gives us
\begin{align*}
||\mathcal B||&=||\mathcal B^+_n||+||\mathcal B^-_n||+m_{n,+}\\
&\le ||\mathcal I(m_{n,+})||+m_{n,+}-|\mathcal D_+|+||\mathcal I(m_{n,-})||+ m_{n,-}+m_{n,+}\\
&= ||\mathcal I(m_{n,+})||+||\mathcal I(m_{n,-})|| + m +m_{n,+} - |\mathcal D_+|\\
&\le ||\mathcal I(m_{n,+})||+||\mathcal I(m_{n,-})|| + m + m_{n,-} - pm.
\end{align*}
Now, since $m_{n,+}>m_{n,-}$, by Lemma \ref{lem_colex_sums} we have $||\mathcal I(m_{n,+})||+||\mathcal I(m_{n,-})||+ m_{n,-}\le ||\mathcal I(m)||$, and hence
\[
||\mathcal B|| \le ||\mathcal I(m)||+m-pm.
\]
Since $\mathcal B$ is the complement of a counterexample to the union-closed conjecture, we also have $||\mathcal B|| > mn/2$, and the result follows.
\end{proof}

Putting Theorem \ref{thm_stability} and Lemma \ref{lem_few_with_root} together, we can prove Theorem \ref{thm_main}. Indeed, suppose there is a counterexample $\mathcal A$ to the union-closed conjecture in $\mathcal P(n)$, and let $\mathcal B$ be $\mathcal P(n)\setminus \mathcal A$ with $|\mathcal B| = m$. Suppose that $||\mathcal I(m)|| = m(n/2-1+p)$. Then by Lemma \ref{lem_few_with_root} we have $|\mathcal B_{\{i\}}|\le 3pm$ for every $i\in [n]$. The family $\mathcal B$ is the complement of a union-closed family, and so is simply rooted, so by Theorem \ref{thm_stability} we have
\[
||\mathcal B|| \le ||\mathcal I(m)|| + m - m^2(1/12 - 9p^2/12)/2^n.
\]
However, $||\mathcal B|| > mn/2$, since $\mathcal B$ is the complement of a counterexample to the union-closed conjecture. Hence
\[
||\mathcal I(m)|| = m(n/2-1+p) > m \left(n/2 - 1 + \frac{m(1-9p^2)}{12\cdot 2^n}\right).
\]
Now, $\mathcal P(n)\setminus \mathcal B$ is a counterexample to the union-closed conjecture, so by Corollary \ref{cor_old_bound} we have $m\ge 2^n/3$, and so
\[
pm > m(1/36 - 9p^2/36),
\]
and
\[
36p + 9p^2 > 1.
\]
This is false for all $0\le p \le 1/37$, and so we have that
\[
 ||\mathcal I(m)|| > m(n/2-1+1/37),
\]
proving Theorem \ref{thm_main} with a bound of $c_1\ge 1/37$. \qed

\section{Bounding $||\mathcal I(m)||$}\label{sec_pf_cor}
In this section we bound $||\mathcal I(m)||$, enabling us to prove Corollary \ref{cor_main}. We will use a result of Cz\'edli, Mar\'oti and Schmidt \cite{CzMa}, which states that for a positive integer $r$ we have $||\mathcal I(m)|| > mr/2$ if and only if $m>2^{r+2}/3$. Here, we shall want a more precise bound for general $m$.
\begin{lemma}\label{lem_colex_total}
Let $r$ and $m$ be positive integers with $r\ge 1$ and $2^r/3\le m \le 2^{r+1}/3$, and write $m = 2^r/3 + m'$. Then
\[
 ||\mathcal I(m)|| \le m(r/2-1) + 3m'/2.
\]
\end{lemma}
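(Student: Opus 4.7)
The plan is to prove the claim by induction on $r$, after first simplifying the right-hand side. Expanding $3m'/2 = (3m - 2^r)/2$ turns the target inequality into the cleaner form
\[
\|\mathcal I(m)\| \le m(r+1)/2 - 2^{r-1}.
\]
The base case $r=1$ forces $m=1$ and both sides equal $0$. For the inductive step, the hypothesis $2^r/3 \le m \le 2^{r+1}/3 < 2^r$ implies that the largest power of two not exceeding $m$ is $2^{k_1}$ with $k_1 \in \{r-1, r-2\}$. Writing $m = 2^{k_1} + m''$, the recursive structure of colex on $\mathcal P(k_1+1)$ yields
\[
\|\mathcal I(m)\| = k_1 2^{k_1-1} + m'' + \|\mathcal I(m'')\|,
\]
since the first $2^{k_1}$ elements of colex form $\mathcal P(k_1)$ and the next $m''$ elements are $\{k_1+1\} \cup C$ for $C$ running through the first $m''$ elements of colex on $\mathbb N^{(<\infty)}$. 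So the task reduces to bounding $\|\mathcal I(m'')\|$.

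When $k_1 = r-1$, we have $m'' \in [0, 2^{r-1}/3]$, so $m'' \le 2^{(r-1)+2}/3$ and the Cz\'edli--Mar\'oti--Schmidt result (quoted at the start of the section) gives $\|\mathcal I(m'')\| \le m''(r-1)/2$. Substituting into the recursion and collecting terms, both sides of the target inequality reduce to $(r-1)2^{r-2} + (r+1)m''/2$, so the bound holds with equality.

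When $k_1 = r-2$, the range $m'' \in [2^{r-2}/3, 2^{r-2})$ straddles $2^{r-1}/3$, which forces a two-way split. If $m'' \le 2^{r-1}/3$, then $m''$ lies in the range for applying the inductive hypothesis with $r'' = r-2$; if $m'' > 2^{r-1}/3$, then it lies in the range for applying it with $r'' = r-1$ (legitimate since $m'' < 2^{r-2} < 2^{r}/3$). In the first subcase, substitution gives equality as in Case A, while in the second it leaves a slack of $2^{r-3} - m''/2$, which is non-negative because $m'' < 2^{r-2}$. The main obstacle is the subtlety in this last case: simply applying the Cz\'edli--Mar\'oti--Schmidt inequality to $m''$ is too weak when $m < 2^{r-1}$ (the target is stronger than the CMS bound by $m - 2^{r-1} < 0$ there), so one genuinely needs the inductive hypothesis with correctly matched $r''$, and the decomposition point $2^{r-1}/3$ is chosen precisely so that the two subcases line up.
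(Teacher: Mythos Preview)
Your proof is correct and follows essentially the same approach as the paper: induction on $r$ with the case split $m \ge 2^{r-1}$ versus $m < 2^{r-1}$ (your $k_1 = r-1$ and $k_1 = r-2$), using the Cz\'edli--Mar\'oti--Schmidt bound in the first case and the inductive hypothesis (with $r'' = r-2$ or $r'' = r-1$ according as $m'' \le 2^{r-1}/3$ or $m'' > 2^{r-1}/3$) in the second. The only cosmetic differences are that you rewrite the target as $m(r+1)/2 - 2^{r-1}$ and, in the first case, apply CMS to the remainder $m''$ rather than directly to $m$; the computations are equivalent, and since your Case A does not invoke the inductive hypothesis, the single base case $r=1$ suffices where the paper checks $r=1,2$.
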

\begin{proof}
We prove this by induction on $r$ --- we deduce the assertion for $r$ from those for $r-1$ and $r-2$. For $r=1$ or $2$ the result is easy to check. For $r\ge 3$, first suppose that $m \ge 2^{r-1}$. Since $m \le 2^{r+1}/3$, we have
\begin{align*}
||\mathcal I(m)|| \le m(r/2-1/2)= m(r/2-1) + m/2.
\end{align*}
Also, $m'\ge m/3$, so the result follows. Otherwise, write $m = 2^{r-2} + k$, where $2^{r-2}/3 \le k < 2^{r-2}$. If $k \ge 2^{r-1}/3$, we set $k = 2^{r-1}/3 +k'$ and use the induction hypothesis;
\begin{align*}
||\mathcal I(m)|| &= (r/2-1)2^{r-2} + k +||\mathcal I(k)||\\
&\le (r/2-1)2^{r-2} + k+ k(r/2-3/2) + 3k'/2\\
&= (r/2-1)m - k/2 + 3(k-2^{r-1}/3)/2,
\end{align*}
while $m' = k - 2^{r-2}/3$. Hence we need that for all $2^{r-2}/3 \le k < 2^{r-2}$,
\[
3/2( k - 2^{r-2}/3) \le k/2 + 3(k-2^{r-1}/3)/2,
\]
which does indeed hold. Finally, if $k<2^{r-1}/3$ we have $k = 2^{r-2}/3 +m'$, and by the induction hypothesis we have
\begin{align*}
||\mathcal I(m)|| &= (r/2-1)2^{r-2} + k +||\mathcal I(k)||\\
&\le (r/2-1)2^{r-2} + k + k(r/2-2) + 3m'/2\\
&= (r/2-1)m + 3m'/2,
\end{align*}
as required.
\end{proof}
In fact, we have equality in Lemma \ref{lem_colex_total} whenever $m$ is of the form $2^{a}+2^{a-2}+\dots+2^{a-2j} + 2^{a-2j-1}$ for some integers $a$ and $j$ with $a>0$, $j\ge 0$ and $a-2j-1>0$. We can now prove Corollary \ref{cor_main}. If $\mathcal A$ is a counterexample to the union-closed conjecture in $\mathcal P(n)$, and $\mathcal B = \mathcal P(n)\setminus \mathcal A$ with $|\mathcal B|=m$, then write $m = 2^n/3 + m'$. Then from Theorem \ref{thm_main} and Lemma \ref{lem_colex_total} we have
\begin{align*}
m(n/2-1)+3m'/2 &\ge ||\mathcal I(m)||\\
&\ge m(n/2-1+1/37),
\end{align*}
and so $3m'/2 \ge (2^n/3+m')/37$, which rearranges to $m'\ge \frac{2}{327}2^n$, and Corollary \ref{cor_main} follows with a bound of $c_2\ge \frac{2}{327}$.\qed

\section{Improving the constants}\label{sec_refine}
In this section, we give a modification to the arguments in Section \ref{sec_pf_thm} which improves the constants in our main theorems. To do this, we give stronger versions of Lemmas \ref{lem_split_rooted} and \ref{lem_many_bad}.
For a triple of simply rooted families $\mathcal B$, $\mathcal B_1$, $\mathcal B_2$ with $\mathcal B=\mathcal B_1\cup \mathcal B_2$,
\[
Z(\mathcal B, \mathcal B_1, \mathcal B_2) = \{B\in \mathcal B_1\cap \mathcal B_2: d_{\mathcal B}(B),\, d_{\mathcal B_1}(B) \textrm{ and } d_{\mathcal B_2}(B) \textrm{ are all distinct}\}.
\]
The definition of $Z(\mathcal B, \mathcal B_1, \mathcal B_2)$ is motivated by the proof of Lemma \ref{lem_split_rooted}. The sets in $Z(\mathcal B,\mathcal B_1,\mathcal B_2)$ are those sets $B$ for which $d_{\mathcal B_1}(B)$ and $d_{\mathcal B_2}(B)$ may be distinct sets of $d(\mathcal B_1)\cap d(\mathcal B_2)$, so if we can bound $|Z(\mathcal B,\mathcal B_1,\mathcal B_2)|$ we can improve our bound on $|d(\mathcal B_1)\cap d(\mathcal B_2)|$.
\begin{lemma}\label{lem_split_rooted_2}
Let $\mathcal B$, $\mathcal B_1$ and $\mathcal B_2$ be simply rooted families, with $\mathcal B_1 \cup \mathcal B_2 = \mathcal B$. Let $b$ be the number of bad sets of $\mathcal B$. If every set $B\in \mathcal B_1 \cap \mathcal B_2$ has $\delta B \subseteq \mathcal B$, then 
\[
|d(\mathcal B_1)\cap d(\mathcal B_2)| \le b + |Z(\mathcal B, \mathcal B_1, \mathcal B_2)|.
\]
\end{lemma}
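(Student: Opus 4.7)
The plan is to follow the proof of Lemma \ref{lem_split_rooted}, refined to exploit the new hypothesis.

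For each $B\in X:=d(\mathcal B_1)\cap d(\mathcal B_2)$, write $B_1=d_{\mathcal B_1}^{-1}(B)\in\mathcal B_1$ and $B_2=d_{\mathcal B_2}^{-1}(B)\in\mathcal B_2$. First I would repeat the opening of the original argument: if both $B_1$ and $B_2$ were good, then Lemma \ref{lem_good_fall} would yield $d_\mathcal B(B_1)=d_\mathcal B(B_2)=B$, and injectivity of $d_\mathcal B$ on $\mathcal B$ would force $B_1=B_2\in\mathcal B_1\cap\mathcal B_2$; but under our hypothesis this common set has $\delta B_1\subseteq\mathcal B$ and so is bad, contradicting goodness. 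Hence at least one of $B_1,B_2$ is bad.

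The plan is then to charge each $B\in X$ to a bad preimage by the rule ``charge $B$ to $B_1$ if $B_1$ is bad; otherwise charge to $B_2$'' (which is then bad). A bad $C\in\mathcal B$ can then be charged at most twice: at most once as the $B_1$-preimage of some pair (since $d_{\mathcal B_1}$ is injective), and at most once as the $B_2$-preimage of a pair whose companion $B_1'$ happens to be good.

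The main obstacle is the key claim that a doubly charged $C$ must lie in $Z$. Such a $C$ is in $\mathcal B_1\cap\mathcal B_2$ (hence bad by hypothesis), must satisfy $d_{\mathcal B_1}(C)\ne d_{\mathcal B_2}(C)$ (otherwise the two candidate pairs collapse to the single diagonal pair $(C,C)$), and comes with two distinct companions $B_2':=d_{\mathcal B_2}^{-1}(d_{\mathcal B_1}(C))$ and $B_1':=d_{\mathcal B_1}^{-1}(d_{\mathcal B_2}(C))$, both different from $C$. The second charge forces $B_1'$ to be good; applying Lemma \ref{lem_good_fall} gives $d_\mathcal B(B_1')=d_{\mathcal B_2}(C)$, so $d_\mathcal B(C)=d_{\mathcal B_2}(C)$ would force $C=B_1'$ by injectivity of $d_\mathcal B$, a contradiction. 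Hence $d_\mathcal B(C)\ne d_{\mathcal B_2}(C)$. The symmetric statement $d_\mathcal B(C)\ne d_{\mathcal B_1}(C)$ is the subtler step: if $B_2'$ is good, the parallel application of Lemma \ref{lem_good_fall} works directly; if $B_2'$ is bad, one instead notes that $d_{\mathcal B_2}(B_2')=d_{\mathcal B_1}(C)$ and combines this with Lemma \ref{lem_fall_b} and the injectivity of $d_\mathcal B$ on $\mathcal B$ to rule out $d_\mathcal B(C)=d_{\mathcal B_1}(C)$. Combined with $d_{\mathcal B_1}(C)\ne d_{\mathcal B_2}(C)$, this places $C$ in $Z$.

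Summing charges, each singly charged bad set contributes $1$ and each doubly charged bad set---all of which lie in $Z$---contributes $2$, yielding $|d(\mathcal B_1)\cap d(\mathcal B_2)|\le b+|Z(\mathcal B,\mathcal B_1,\mathcal B_2)|$, as required.
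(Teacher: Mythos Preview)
Your overall strategy matches the paper's: map each element of $X=d(\mathcal B_1)\cap d(\mathcal B_2)$ to a bad set of $\mathcal B$, and show that any bad set hit twice must lie in $Z$. The difference is in the charging rule, and this is where your argument breaks. The paper does \emph{not} charge ``to $B_1$ if $B_1$ is bad, else to $B_2$''. Instead, when $B_1\neq B_2$ it picks $i\in\{1,2\}$ with $d_{\mathcal B_i}(B_i)\neq d_{\mathcal B}(B_i)$ (such $i$ exists by injectivity of $d_{\mathcal B}$) and charges to that $B_i$; this choice \emph{builds in} the inequality $d_{\mathcal B_i}(f(S))\neq d_{\mathcal B}(f(S))$, so that when $f(S)=f(T)=B$ one immediately gets $S=d_{\mathcal B_i}(B)\neq d_{\mathcal B}(B)\neq d_{\mathcal B_j}(B)=T$, i.e.\ $B\in Z$.

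With your rule, the first charge on $C$ (as a $B_1$-preimage) records only that $C$ is bad; it says nothing about $d_{\mathcal B}(C)$ versus $d_{\mathcal B_1}(C)$. In your ``subtler step'' you then need $d_{\mathcal B}(C)\neq d_{\mathcal B_1}(C)$ in the sub-case where $B_2':=d_{\mathcal B_2}^{-1}(d_{\mathcal B_1}(C))$ is bad, and you appeal to Lemma~\ref{lem_fall_b} together with injectivity of $d_{\mathcal B}$. But Lemma~\ref{lem_fall_b} only applies to a set with a missing shadow element in the ambient family, and here $C\in\mathcal B_1\cap\mathcal B_2$ has $\delta C\subseteq\mathcal B$ by hypothesis, while for $B_2'$ we have no such information; nor does injectivity of $d_{\mathcal B}$ help, since we know nothing about $d_{\mathcal B}(B_2')$ when $B_2'$ is bad. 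I do not see how to complete this case with your rule, and nothing prevents $d_{\mathcal B}(C)=d_{\mathcal B_1}(C)$ in principle. The fix is simply to adopt the paper's charging rule, after which the doubly-charged analysis becomes a one-line consequence of the definition of $f$.
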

\begin{proof}
The proof is similar to that of Lemma \ref{lem_split_rooted}. Letting $\mathcal D_i=d(\mathcal B_i)$, consider an element $S$ of $\mathcal D_1\cap \mathcal D_2$. Then $S=d_{\mathcal B_1}(B_1)=d_{\mathcal B_2}(B_2)$ for some $B_1\in \mathcal B_1$ and $B_2 \in \mathcal B_2$. We now define a function $f:\mathcal D_1\cap \mathcal D_2\to \mathcal B$. If $B_1=B_2$, then we set $f(S)=B_1$ --- note that since $B_1\in \mathcal B_1\cap \mathcal B_2$, $\delta B \subseteq \mathcal B$ and so $B_1$ is a bad set of $\mathcal B$. Otherwise, since $d_{\mathcal B}$ is injective, for $i=1$ or $2$ we have $d_{\mathcal B_i}(B_i)\neq d_{\mathcal B}(B_i)$. In this case, we define $f(S)=B_i$ --- note that since $d_{\mathcal B_i}(B_i)\neq d_{\mathcal B}(B_i)$, by Lemma \ref{lem_good_fall} the set $B_i$ is a bad set of $\mathcal B$. So $f(S)$ is a bad set of $\mathcal B$ for all $S\in \mathcal D_1\cap \mathcal D_2$. Also, for $S\neq T \in \mathcal D_1\cap \mathcal D_2$, if $f(S) = f(T)=B$ then
\[
S=d_{\mathcal B_i}(B)\neq d_{\mathcal B}(B) \neq d_{\mathcal B_j}(B)=T,
\]
where $\{i,j\}=\{1,2\}$. In particular $B\in Z(\mathcal B,\mathcal B_1, \mathcal B_2)$, and there is no $U\in \mathcal D_1\cap \mathcal D_2$ with $S\neq U\neq T$ and $f(U)=B$. Hence the size of the image of $f$ is at least $|\mathcal D_1\cap \mathcal D_2|-|Z(\mathcal B,\mathcal B_1,\mathcal B_2)|$, and since every set in the image is a bad set of $\mathcal B$ the result follows.
\end{proof}

We can now prove a stronger form of Lemma \ref{lem_many_bad}, using Lemma \ref{lem_split_rooted_2} and making sure we do not overcount the bad sets of $\mathcal B$. For a family of sets $\mathcal B$ we define
\[
Y(\mathcal B) = \{B\in \mathcal B: \delta B\subseteq \mathcal B \textrm{ and } d_{\mathcal B}(B)=B\}.
\]
The sets in $Y(\mathcal B)$ are those that satisfy both criteria for a set to be bad; we have often overcounted the number of bad sets of $\mathcal B$ by $|Y(\mathcal B)|$.

\begin{lemma}\label{lem_many_bad_2}
Let $(S,T)$ be a partition of $[n]$ into two disjoint sets, and $\mathcal B$ be a simply rooted family in $\mathcal P(n)$. Let $b_1$ be the number of sets $B\in \mathcal B\setminus (\mathcal B_S\cap\mathcal B_T)$ with $\delta B\subseteq \mathcal B$, $b_2 = |\mathcal B_S \cap \mathcal B_T|$, and $b_3$ be the number of sets $B\in \mathcal B$ with $d_{\mathcal B}(B)=B$. Then
\[
 2^{-n}|\mathcal B_S||\mathcal B_T|\le b_1 + b_2 + b_3 + |Z(\mathcal B, \mathcal B_S, \mathcal B_T)| - |Y(\mathcal B)|.
\]
\end{lemma}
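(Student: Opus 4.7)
The plan is to mimic the proof of Lemma~\ref{lem_many_bad} but replace the use of Lemma~\ref{lem_split_rooted} (and its consequence Corollary~\ref{cor_lower_b}) with the sharper Lemma~\ref{lem_split_rooted_2}, and then count the bad sets of $\mathcal B$ by inclusion--exclusion rather than adding the two criteria naively. I would first verify that the pair $(\mathcal B_S,\mathcal B_T)$ satisfies the hypotheses of Lemma~\ref{lem_split_rooted_2}: both $\mathcal B_S$ and $\mathcal B_T$ are simply rooted, their union is all of $\mathcal B$ (since $\mathcal B$ is simply rooted and $S\cup T=[n]$), and for any $B\in \mathcal B_S\cap\mathcal B_T$ there exist distinct $s\in S$ and $t\in T$ with $[\{s\},B]\cup[\{t\},B]\subseteq\mathcal B$, whence for every $i\in B$ we have either $i\neq s$ or $i\neq t$, so $B-i\in\mathcal B$; that is, $\delta B\subseteq\mathcal B$.

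Next I would combine Lemma~\ref{lem_split_rooted_2} with Harris's Lemma exactly as in the derivation of Corollary~\ref{cor_lower_b}. Since $|d(\mathcal B_S)|=|\mathcal B_S|$ and $|d(\mathcal B_T)|=|\mathcal B_T|$, Harris gives
\[
2^{-n}|\mathcal B_S||\mathcal B_T|\le |d(\mathcal B_S)\cap d(\mathcal B_T)|\le b+|Z(\mathcal B,\mathcal B_S,\mathcal B_T)|,
\]
where $b$ denotes the number of bad sets of $\mathcal B$.

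Finally I would count $b$ exactly. A bad set of $\mathcal B$ is one satisfying $\delta B\subseteq\mathcal B$ or $d_{\mathcal B}(B)=B$. By the previous paragraph every $B\in\mathcal B_S\cap\mathcal B_T$ has $\delta B\subseteq\mathcal B$, so the number of $B\in\mathcal B$ with $\delta B\subseteq\mathcal B$ is exactly $b_1+b_2$. The number with $d_{\mathcal B}(B)=B$ is $b_3$, and by definition the number satisfying both conditions simultaneously is $|Y(\mathcal B)|$. By inclusion--exclusion,
\[
b=(b_1+b_2)+b_3-|Y(\mathcal B)|.
\]
Substituting this into the displayed inequality yields the stated bound. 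The only subtlety is making sure the hypothesis of Lemma~\ref{lem_split_rooted_2} is genuinely checked for $\mathcal B_S\cap\mathcal B_T$; the rest is routine counting, and there is no real obstacle once one notices that the earlier argument double-counts precisely the sets in $Y(\mathcal B)$.
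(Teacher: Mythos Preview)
Your proposal is correct and follows essentially the same route as the paper: apply Harris's Lemma together with Lemma~\ref{lem_split_rooted_2} to obtain $2^{-n}|\mathcal B_S||\mathcal B_T|\le b+|Z(\mathcal B,\mathcal B_S,\mathcal B_T)|$, and then compute $b=b_1+b_2+b_3-|Y(\mathcal B)|$ by inclusion--exclusion. If anything, you are more careful than the paper, since you explicitly verify the extra hypothesis of Lemma~\ref{lem_split_rooted_2} that every $B\in\mathcal B_S\cap\mathcal B_T$ has $\delta B\subseteq\mathcal B$.
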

\begin{proof}
The proof is identical to that of Lemma \ref{lem_many_bad} --- letting $b$ be the number of bad sets of $\mathcal B$, by Harris's Lemma and Lemma \ref{lem_split_rooted_2} we have
\[
2^{-n}|\mathcal B_S||\mathcal B_T|\le b + |Z(\mathcal B, \mathcal B_S, \mathcal B_T)|,
\]
and $b = b_1+b_2+b_3-|Y(\mathcal B)|$.
\end{proof}

We shall show that in fact $|Y(\mathcal B)| \ge |Z(\mathcal B, \mathcal B_S, \mathcal B_T)|$, improving our bound on the number of bad sets of $\mathcal B$. For this, we shall use the key lemma of Reimer \cite{Rei} on up-compressions of union-closed families. For a family $\mathcal A\subseteq \mathcal P(n)$, a set $A\in \mathcal A$ and an element $i\in [n]$, we define
\[
u_{(i,\mathcal A)}(A)=
\begin{cases}
A+i: i\notin A,\, A+i\notin \mathcal A\\
A: \mathrm{ otherwise.}
\end{cases}
\]
Then $u(\mathcal A)$, $u_{\mathcal A}(A)$, $u_i(\mathcal A)$, $U_i(\mathcal A)$ and $U_{(\mathcal A,i)}(A)$ are defined analagously to in the case of down-compressions. In particular, $u(\mathcal A) = u_1\dots u_n (\mathcal A)$, and $u_{\mathcal A}(A)$ is the image of the set $A$ in $u(\mathcal A)$ under the sequence of up-compressions $u_1\dots u_n$.

\begin{lemma}\label{lem_Rei}
If $\mathcal A$ is a union-closed family, and $A_1\neq A_2$ are sets in $\mathcal A$, the cubes $[A_1,u_{\mathcal A}(A_1)]$ and $[A_2,u_{\mathcal A}(A_2)]$ are disjoint.\qed
\end{lemma}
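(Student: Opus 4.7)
I would reproduce Reimer's original argument from \cite{Rei}, proceeding by induction on the number of up-compressions applied. Write $\mathcal A^{(k)} = U_k(\mathcal A)$ for the family after the first $k$ compressions in the sequence, and $B_A^{(k)} = U_{(\mathcal A,k)}(A)$ for the image of $A \in \mathcal A$. The stronger statement to prove by induction on $k$ is that the cubes $[A, B_A^{(k)}]$, as $A$ ranges over $\mathcal A$, are pairwise disjoint. The base case $k=0$ is immediate, since each cube is the singleton $\{A\}$ and the sets $A$ are distinct.

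For the inductive step, two facts are used. First, up-compressions preserve union-closedness, so each $\mathcal A^{(k)}$ is union-closed; this is a direct check (if $u_i(\mathcal A)$ failed to be union-closed, one would find two sets $C_1, C_2 \in u_i(\mathcal A)$ whose union is not in $u_i(\mathcal A)$, and tracing back through $u_{(i,\mathcal A)}$ would contradict the union-closedness of $\mathcal A$). Second, the $(k+1)$-th compression $u_i$ replaces each $B_A^{(k)}$ either with itself or with $B_A^{(k)} + i$, the latter occurring precisely when $i \notin B_A^{(k)}$ and $B_A^{(k)} + i \notin \mathcal A^{(k)}$. Correspondingly, each cube either stays the same or grows to $[A, B_A^{(k)}] \sqcup [A+i, B_A^{(k)}+i]$, where in the moving case $i \notin A$ as well.

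The only way two cubes that were disjoint can now overlap is through the newly-added upper half of a moved cube. So suppose for contradiction that $A_1$ is moved by $u_i$ and that $S \in [A_1 + i, B_{A_1}^{(k)} + i]$ also lies in the cube of some $A_2 \neq A_1$ after the $(k+1)$-th compression. Then $i \in S$ and $S \setminus \{i\} \subseteq B_{A_1}^{(k)}$, so the set $S - i$ lies in the old cube $[A_1, B_{A_1}^{(k)}]$; by the induction hypothesis it lies in no other old cube. I would then combine this with the union-closed property of $\mathcal A^{(k)}$, applied to the pair $B_{A_1}^{(k)}$ and $B_{A_2}^{(k)}$ (or $B_{A_2}^{(k)} + i$ if $A_2$ was also moved by $u_i$), to exhibit a set in $\mathcal A^{(k)}$ equal to $B_{A_1}^{(k)} + i$; this contradicts the assumption $B_{A_1}^{(k)} + i \notin \mathcal A^{(k)}$ which made $A_1$ move in the first place.

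The main obstacle is this last step: the contradiction requires a careful case split on whether $A_2$ is itself moved by $u_i$, and on whether $i \in B_{A_2}^{(k)}$, and in each case one must identify the right pair of sets already in $\mathcal A^{(k)}$ whose union pins down $B_{A_1}^{(k)} + i$ exactly, rather than merely containing it. This delicate bookkeeping, and the interplay between the old disjointness (used to locate $S-i$) and the union-closedness (used to force $B_{A_1}^{(k)} + i$ into $\mathcal A^{(k)}$), is the technical heart of Reimer's proof.
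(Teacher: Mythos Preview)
The paper does not give its own proof of this lemma: the statement is quoted as ``the key lemma of Reimer \cite{Rei}'' and is followed immediately by \qed. Your plan to reproduce Reimer's inductive argument---induction on the number of compressions applied, preservation of union-closedness under each $u_i$, and a case analysis on which of $A_1,A_2$ is moved to derive the final contradiction---is therefore exactly what the paper's citation points to, and the structure you outline is the standard one.
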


We make a simple observation about the relationship between sets of a simply rooted family which lose an element under the down-compression $d_{\mathcal B}$, and the sets of the union-closed family $\mathcal P(n)\setminus \mathcal B$.
\begin{lemma}\label{lem_uc_image}
Let $\mathcal B\subseteq \mathcal P(n)$ be a simply rooted family, let $\mathcal A = \mathcal P(n)\setminus \mathcal B$, and let $B\in \mathcal B$. If $d_{\mathcal B}(B)\neq B$ then for some $1\le k \le n$ and some $A\in \mathcal A$ we have $U_{(\mathcal A,k)}(A)=B$.
\end{lemma}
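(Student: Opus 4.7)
The plan is to follow the set $B$ through the sequence of down-compressions on $\mathcal B$, identify the step at which it loses an element, and then use the complement identity (\ref{eq_comp_equiv}) to translate this into a corresponding up-compression path on $\mathcal A$ that ends at $B$.

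First, I would invoke Part \ref{it_one_fall} of Lemma \ref{lem_rooted_basics}: since $d_{\mathcal B}(B) \neq B$, we have $|B \setminus d_{\mathcal B}(B)| = 1$. Let $\ell$ be the minimal index with $D_{(\mathcal B,\ell)}(B) \neq B$. Then by the definition of $d_{(\ell, D_{\ell-1}(\mathcal B))}$, we must have $\ell \in B$, $B \in D_{\ell-1}(\mathcal B)$, and $B - \ell \notin D_{\ell-1}(\mathcal B)$, with $D_{(\mathcal B,\ell)}(B) = B - \ell$.

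Next, I would iterate equation (\ref{eq_comp_equiv}) to obtain $\mathcal P(n) \setminus D_{k}(\mathcal B) = U_{k}(\mathcal A)$ for every $k$ (this is immediate by induction on $k$, using the complement identity at each single step, since the down- and up-compression sequences are applied in matching orders). Specializing to $k = \ell - 1$, the two membership facts above translate to $B \notin U_{\ell-1}(\mathcal A)$ and $B - \ell \in U_{\ell-1}(\mathcal A)$. The latter supplies some $A \in \mathcal A$ with $U_{(\mathcal A, \ell-1)}(A) = B - \ell$.

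Finally, I would apply $u_\ell$ one more step. Since $\ell \notin B - \ell$ and $(B - \ell) + \ell = B \notin U_{\ell-1}(\mathcal A)$, the definition of $u_{(\ell, U_{\ell-1}(\mathcal A))}$ gives $u_{(\ell, U_{\ell-1}(\mathcal A))}(B - \ell) = B$. Composing, $U_{(\mathcal A,\ell)}(A) = B$, proving the lemma with $k = \ell$.

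The only subtlety is verifying that the down-compression sequence on $\mathcal B$ and the up-compression sequence on $\mathcal A$ really are indexed in matching order, so that (\ref{eq_comp_equiv}) iterates cleanly; once this bookkeeping is in place, the proof is a direct chase through the definitions and presents no genuine obstacle.
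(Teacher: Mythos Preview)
Your proof is correct and follows essentially the same route as the paper's: pick the minimal index $\ell$ at which $B$ moves, use the complement identity to conclude $B-\ell\in U_{\ell-1}(\mathcal A)$, pull back to some $A\in\mathcal A$, and apply $u_\ell$ once more. You supply a bit more detail than the paper (explicitly checking $B\notin U_{\ell-1}(\mathcal A)$ for the final step, and flagging the order-matching bookkeeping), and the opening appeal to Part~\ref{it_one_fall} of Lemma~\ref{lem_rooted_basics} is harmless but not actually needed.
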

\begin{proof}
Let $k$ be minimal with $D_{(\mathcal B,k)}(B)\neq B$. Then $D_{(\mathcal B,k)}(B) = B-k$, and $B-k \notin D_{k-1}(B)$. Hence $B-k \in \mathcal P(n)\setminus D_{k-1}(B) = U_{k-1}(\mathcal A)$, and so $B-k = U_{(\mathcal A,k-1)}(A)$ for some $A\in \mathcal A$, and $B = U_{(\mathcal A,k)}(A)$.
\end{proof}
For a simply rooted family $\mathcal B$, and a set $B\in \mathcal B$, let $R_{\mathcal B}(B)=\{r\in [n]:[\{r\},B]\subseteq \mathcal B\}$ be the set of roots of $B$ in $\mathcal B$. We now prove that if $B$ is in some cube $[A,U_\mathcal A(A)]$, we must have $A= B\setminus R_{\mathcal B}(B)$.
\begin{lemma}\label{lem_cube_set}
Let $\mathcal B\subseteq \mathcal P(n)$ be a simply rooted family, and let $B\in \mathcal B$. Let $\mathcal A= \mathcal P(n)\setminus \mathcal B$. If $B$ is in the cube $[A,U_\mathcal A(A)]$ for some $A\in \mathcal A$, then $A= B\setminus R_\mathcal B(B)$.
\end{lemma}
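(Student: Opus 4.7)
The plan is to show the equivalent statement $R_\mathcal B(B)=B\setminus A$; combined with the hypothesis $A\subseteq B$, this immediately yields $A=B\setminus R_\mathcal B(B)$. I would prove the two inclusions separately, with the forward one being essentially trivial and the reverse one carrying all the content via Reimer's Lemma.

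The easy direction $R_\mathcal B(B)\cap A=\emptyset$ uses only the hypothesis $A\subseteq B$. For any $r\in A$, the set $A$ itself satisfies $\{r\}\subseteq A\subseteq B$, so $A\in[\{r\},B]$; but $A\in\mathcal A=\mathcal P(n)\setminus\mathcal B$, so $A\notin\mathcal B$, and hence $[\{r\},B]\not\subseteq\mathcal B$, giving $r\notin R_\mathcal B(B)$. Therefore $R_\mathcal B(B)\subseteq B\setminus A$.

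For the reverse inclusion $B\setminus A\subseteq R_\mathcal B(B)$, I would fix $r\in B\setminus A$ and any $C$ with $\{r\}\subseteq C\subseteq B$, and argue $C\in\mathcal B$ by contradiction: suppose $C\in\mathcal A$. Since $\mathcal A$ is union-closed by Observation \ref{obs_rooted}, $A\cup C\in\mathcal A$. We have $A\subseteq u_\mathcal A(A)$ by definition and $C\subseteq B\subseteq u_\mathcal A(A)$ by the hypothesis $B\in[A,u_\mathcal A(A)]$, so $A\cup C\subseteq u_\mathcal A(A)$, which together with $A\subseteq A\cup C$ places $A\cup C$ in the cube $[A,u_\mathcal A(A)]$. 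On the other hand, $A\cup C$ trivially lies in its own Reimer cube $[A\cup C,u_\mathcal A(A\cup C)]$. Lemma \ref{lem_Rei} says that Reimer cubes based at distinct elements of $\mathcal A$ are disjoint, which forces $A\cup C=A$, i.e.\ $C\subseteq A$. But $r\in C\setminus A$, a contradiction. Hence every such $C$ is in $\mathcal B$, so $[\{r\},B]\subseteq\mathcal B$ and $r\in R_\mathcal B(B)$.

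The main obstacle is identifying the right set to feed into Reimer's Lemma: one must pass from the hypothetical offender $C$ to the union $A\cup C$, because $C$ itself need not lie in $[A,u_\mathcal A(A)]$ (we have no reason to expect $A\subseteq C$). This is precisely where union-closedness of $\mathcal A$ is invoked; once $A\cup C$ is in hand, Reimer's disjointness closes the argument immediately.
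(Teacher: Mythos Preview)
Your proof is correct, and it takes a genuinely different route from the paper's.

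Both arguments handle the easy inclusion $R_\mathcal B(B)\subseteq B\setminus A$ in essentially the same way. The difference is in the hard inclusion. The paper first proves, via a separate rooting argument, that $B\setminus R_\mathcal B(B)\in\mathcal A$: if it were in $\mathcal B$ it would be $\mathcal B$-rooted at some $b\in B\setminus R_\mathcal B(B)$, and one checks that this would force $B$ itself to be $\mathcal B$-rooted at $b$, a contradiction. Having $B\setminus R_\mathcal B(B)\in\mathcal A$ and $A\subseteq B\setminus R_\mathcal B(B)\subseteq B\subseteq u_\mathcal A(A)$, a single application of Reimer's Lemma to the cubes based at $A$ and at $B\setminus R_\mathcal B(B)$ finishes. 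You instead bypass the intermediate claim entirely: for each $r\in B\setminus A$ and each $C\in[\{r\},B]$, you use union-closedness of $\mathcal A$ to manufacture the set $A\cup C\in\mathcal A$ lying in $[A,u_\mathcal A(A)]$, and then Reimer forces $A\cup C=A$, contradicting $r\in C\setminus A$. Your approach is arguably cleaner in that it avoids the auxiliary rooting argument, trading it for a direct appeal to union-closedness (which the paper's proof never explicitly invokes). The paper's approach, on the other hand, isolates the standalone fact $B\setminus R_\mathcal B(B)\in\mathcal A$ and applies Reimer only once.
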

\begin{proof}
Let $R= R_\mathcal B(B)$. First we observe that $B\setminus R \in \mathcal A$. Indeed, suppose $B\setminus R\in \mathcal B$; then it is $\mathcal B$-rooted at some $b\in B\setminus R$. But then we have $\{B'\subseteq B: B'\cap R \neq \emptyset\}\subseteq \mathcal B$, and $\{B'\subseteq B\setminus R: b\in B'\}\subseteq \mathcal B$. Hence $B$ is $\mathcal B$-rooted at $b$, and so $b\in R$, a contradiction as $b\in B\setminus R$.

Now, since $B\in [A,U_\mathcal A(A)]$, we have $A\subseteq B$. However, $\{B'\subseteq B: B'\cap R \neq \emptyset\}$ is contained in the family $\mathcal B$, and hence we have $A\subseteq B\setminus R$. In particular, $B\setminus R \in [A,B]\subseteq [A,U_\mathcal A(A)]$. Hence the cubes $[A,U_\mathcal A(A)]$ and $[B\setminus R,U_\mathcal A(B\setminus R)]$ intersect, and so from Theorem \ref{lem_Rei} we have $A= B\setminus R$.
\end{proof}
Using the last two lemmas, it is immediate that if a set $B\in \mathcal B$ loses an element $r$ under the down-compression $d_\mathcal B$, then $B$ is $\mathcal B$-rooted at $r$.
\begin{lemma}\label{lem_root_fall}
Let $\mathcal B\subseteq \mathcal P(n)$ be a simply rooted family, and let $B\in \mathcal B$. Then $d_{\mathcal B}(B)\in \{B\}\cup\{B-r:r\in R_{\mathcal B}(B)\}$.
\end{lemma}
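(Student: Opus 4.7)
The statement is trivial when $d_\mathcal B(B) = B$, so the plan is to assume $d_\mathcal B(B) \neq B$ and show that the (by Part \ref{it_one_fall} of Lemma \ref{lem_rooted_basics} necessarily unique) element $r \in B$ with $d_\mathcal B(B) = B - r$ lies in $R_\mathcal B(B)$. Writing $\mathcal A = \mathcal P(n) \setminus \mathcal B$, the idea is to realise $B$ as an up-compression image of some $A \in \mathcal A$, pin down $A$ exactly via Lemma \ref{lem_cube_set}, and then read off that $r \notin A$.

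Concretely, I would first invoke Lemma \ref{lem_uc_image} to obtain $1 \le k \le n$ and $A \in \mathcal A$ with $U_{(\mathcal A,k)}(A) = B$; inspection of the proof of that lemma shows $k$ is the minimal index with $D_{(\mathcal B,k)}(B) \neq B$, and so $k = r$ by Part \ref{it_one_fall} of Lemma \ref{lem_rooted_basics}. Because successive up-compressions only enlarge sets, $A \subseteq U_{(\mathcal A,k)}(A) = B \subseteq U_\mathcal A(A)$, so $B \in [A, U_\mathcal A(A)]$. Lemma \ref{lem_cube_set} then forces $A = B \setminus R_\mathcal B(B)$.

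To finish, I need $r \notin A$. The minimality of $r$ gives $B \in D_{r-1}(\mathcal B)$, hence $B \notin U_{r-1}(\mathcal A)$; but $u_r$ sends $U_{(\mathcal A,r-1)}(A)$ to $U_{(\mathcal A,r)}(A) = B$, so this compression cannot fix $U_{(\mathcal A,r-1)}(A)$, forcing $U_{(\mathcal A,r-1)}(A) = B - r$. In particular $r \notin A = B \setminus R_\mathcal B(B)$, and since $r \in B$ we conclude $r \in R_\mathcal B(B)$, as required. The argument is really just a clean assembly of the two previous lemmas; the only care required, and the only place a careless proof could go wrong, is the bookkeeping that identifies the step at which $B$ first falls under $d_\mathcal B$ with the step of the mirror up-compression that introduces $r$ into the set.
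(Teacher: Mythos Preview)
Your proof is correct and follows essentially the same route as the paper's: invoke Lemma~\ref{lem_uc_image} to place $B$ in a cube $[A,u_{\mathcal A}(A)]$, apply Lemma~\ref{lem_cube_set} to identify $A=B\setminus R_{\mathcal B}(B)$, and then conclude $r\notin A$. The only difference is cosmetic: the paper compresses your final paragraph into the single assertion ``$d_{\mathcal B}(B)\in[A,B]$'' (which, as you rightly note, really does require the bookkeeping from the proof of Lemma~\ref{lem_uc_image} identifying $k$ with the first drop of $B$), whereas you spell this step out.
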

\begin{proof}
Let $\mathcal A = \mathcal P(n)\setminus \mathcal B$. Suppose $d_{\mathcal B}(B)\neq B$ --- then by Lemma \ref{lem_rooted_basics}, $d_{\mathcal B}(B)=B-b$ for some $b\in B$. Also, by Lemma \ref{lem_uc_image} we have that for some $k$ and some $A\in \mathcal A$ we have $U_{(\mathcal A, k)}(A)=B$. In particular, $B\in [A,u_{\mathcal A}(A)]$, and so $A=B\setminus R_{\mathcal B}(B)$. Since $d_{\mathcal B}(B) \in [A,B]$, we then have $b \in B\setminus A = R_{\mathcal B}(B)$, as required.
\end{proof}
In the special case where $B-b\notin \mathcal B$ for some $b\in B$, we must have $R_{\mathcal B}(B)=\{b\}$, and so Lemma \ref{lem_fall_b} is a special case of Lemma \ref{lem_root_fall}. We read out the following corollary on the number of roots of sets in $Z(\mathcal B, \mathcal B_S, \mathcal B_T)$.
\begin{corollary}\label{cor_Z_roots}
Let $\mathcal B$ be a simply rooted family, $S\cup T$ a partition of $[n]$, and $B\in Z(\mathcal B, \mathcal B_S, \mathcal B_T)$. Then $|R_{\mathcal B}(B)|\ge 2$. If $d_{\mathcal B}(B)\neq B$, then $|R_{\mathcal B}(B)|\ge 3$.
\end{corollary}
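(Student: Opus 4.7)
The plan is to get both assertions essentially from Lemma \ref{lem_root_fall}, which pins down how $d_\mathcal B$, $d_{\mathcal B_S}$ and $d_{\mathcal B_T}$ can act on $B$: each image is either $B$ itself or $B-r$ for a root $r$ of $B$ in the corresponding family. For the first part, I would simply observe that $B\in \mathcal B_S\cap \mathcal B_T$ forces $B$ to be $\mathcal B$-rooted at some $s\in S$ and at some $t\in T$; since $S\cap T=\emptyset$ these roots are distinct, giving $|R_{\mathcal B}(B)|\ge 2$.

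For the second part, the key observation will be that under the hypothesis $d_\mathcal B(B)\neq B$, one can rule out the possibility that $d_{\mathcal B_S}(B)=B$ (and symmetrically $d_{\mathcal B_T}(B)=B$). Indeed, $\mathcal B_S\subseteq \mathcal B$ and both are simply rooted (as remarked just before Lemma \ref{lem_many_bad}), so by Lemma \ref{lem_smaller_falls}, if $d_{\mathcal B_S}(B)=B$ then $d_\mathcal B(B)=B$, contradicting the hypothesis. Hence all three of $d_\mathcal B(B)$, $d_{\mathcal B_S}(B)$, $d_{\mathcal B_T}(B)$ are strictly smaller than $B$.

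Applying Lemma \ref{lem_root_fall} to each of the three simply rooted families, I can then write $d_\mathcal B(B)=B-r_0$, $d_{\mathcal B_S}(B)=B-r_1$ and $d_{\mathcal B_T}(B)=B-r_2$ for some $r_0\in R_\mathcal B(B)$, $r_1\in R_{\mathcal B_S}(B)$, $r_2\in R_{\mathcal B_T}(B)$. A brief check shows $R_{\mathcal B_S}(B)\subseteq R_\mathcal B(B)\cap S$ -- any $r\in R_{\mathcal B_S}(B)$ forces $\{r\}\in \mathcal B_S$ and hence $r\in S$ -- and likewise $R_{\mathcal B_T}(B)\subseteq R_\mathcal B(B)\cap T$. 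Since $d_\mathcal B(B)$, $d_{\mathcal B_S}(B)$ and $d_{\mathcal B_T}(B)$ are pairwise distinct by the definition of $Z(\mathcal B,\mathcal B_S,\mathcal B_T)$, the three roots $r_0, r_1, r_2$ are pairwise distinct, giving $|R_\mathcal B(B)|\ge 3$.

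There is no serious obstacle here; the statement is a direct consequence of the compression machinery developed earlier. The only mildly subtle step is recognising that Lemma \ref{lem_smaller_falls} forbids $d_{\mathcal B_S}(B)=B$ (and similarly for $\mathcal B_T$) whenever $d_\mathcal B(B)\neq B$, which is exactly what upgrades the count of distinct roots from two to three.
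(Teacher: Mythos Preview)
Your proof is correct and follows essentially the same approach as the paper: both use Lemma~\ref{lem_root_fall} to constrain the three images to the set $\{B\}\cup\{B-r:r\in R_{\mathcal B}(B)\}$, and both invoke Lemma~\ref{lem_smaller_falls} to exclude $B$ itself from the list when $d_{\mathcal B}(B)\neq B$. The only minor difference is in the first assertion, where you argue directly from $B\in\mathcal B_S\cap\mathcal B_T$ and the disjointness of $S$ and $T$, whereas the paper instead uses the distinctness of the three images together with Lemma~\ref{lem_root_fall}; both routes are equally short and valid.
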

\begin{proof}
By Lemma \ref{lem_root_fall} the sets $d_{\mathcal B_S}(B)$, $d_{\mathcal B_T}(B)$ and $d_{\mathcal B}(B)$ are all elements of $\{B\}\cup\{B-r:r\in R_{\mathcal B}(B)\}$. But $B\in Z(\mathcal B, \mathcal B_S, \mathcal B_T)$, so these sets are all distinct, and in particular, $|R_{\mathcal B}(B)|\ge 2$. If $d_{\mathcal B}(B)\neq B$, then by Lemma \ref{lem_smaller_falls} we also have $d_{\mathcal B_1}(B)\neq B\neq d_{\mathcal B_2}(B)$, so the sets $d_{\mathcal B_1}(B)$, $d_{\mathcal B_2}(B)$ and $d_{\mathcal B}(B)$ are distinct elements of $\{B-r:r\in R_{\mathcal B}(B)\}$ and $|R_{\mathcal B}(B)|\ge 3$.\end{proof}

Now we shall prove that $|Y(\mathcal B)| \ge |Z(\mathcal B, \mathcal B_S, \mathcal B_T)|$. For a finite set $B$ we define the \emph{$2$nd shadow of $B$} to be $\delta_2 B = \{B'\subseteq B: |B'|=|B|-2\}$.

\begin{lemma}\label{lem_Y_ge_Z}
Let $\mathcal B$ be a simply rooted family, and $S\cup T$ a partition of $[n]$. Then $|Y(\mathcal B)| \ge |Z(\mathcal B, \mathcal B_S, \mathcal B_T)|$.
\end{lemma}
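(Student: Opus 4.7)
The plan is to establish the stronger inclusion $Z(\mathcal B, \mathcal B_S, \mathcal B_T) \subseteq Y(\mathcal B)$, from which $|Y(\mathcal B)| \ge |Z(\mathcal B, \mathcal B_S, \mathcal B_T)|$ is immediate. We must verify both defining conditions of $Y$ for every $B \in Z$.

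The condition $\delta B \subseteq \mathcal B$ is immediate: since $B \in \mathcal B_S \cap \mathcal B_T$ admits distinct $\mathcal B$-roots $s \in S$ and $t \in T$, for each $i \in B$ whichever of $s, t$ differs from $i$ witnesses $B - i \in \mathcal B$.

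The crux is the condition $d_\mathcal B(B) = B$. The key tool is a sharpening of Lemma \ref{lem_root_fall}: for any simply rooted family $\mathcal B$ and any $B \in \mathcal B$ with $d_\mathcal B(B) \ne B$, one has $d_\mathcal B(B) = B - \max R_\mathcal B(B)$. This can be read from the proofs of Lemmas \ref{lem_uc_image} and \ref{lem_cube_set}: letting $k$ be minimal with $D_{(\mathcal B,k)}(B) \ne B$, Lemma \ref{lem_uc_image} gives $B = U_{(\mathcal A,k)}(A)$ for some $A \in \mathcal A = \mathcal P(n)\setminus\mathcal B$, and Lemma \ref{lem_cube_set} forces $A = B \setminus R_\mathcal B(B)$. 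Since $u_1,\dots,u_k$ can only add elements of $[k]$, we get $R_\mathcal B(B) = B \setminus A \subseteq [k]$; together with $k \in R_\mathcal B(B)$ from Lemma \ref{lem_root_fall}, this gives $k = \max R_\mathcal B(B)$, and then Part \ref{it_one_fall} of Lemma \ref{lem_rooted_basics} gives $d_\mathcal B(B) = B - k$.

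Now suppose for contradiction that $B \in Z$ and $d_\mathcal B(B) = B - a \ne B$, with $a = \max R_\mathcal B(B)$; by symmetry assume $a \in S$. Unfolding the definition of $\mathcal B_S$ yields $R_{\mathcal B_S}(B) = R_\mathcal B(B) \cap S$ (the only subtlety being that $\{r\} \in \mathcal B_S$ forces $r \in S$), so $a \in R_{\mathcal B_S}(B)$, and since $a$ already maximizes the larger set $R_\mathcal B(B)$, also $a = \max R_{\mathcal B_S}(B)$. The contrapositive of Lemma \ref{lem_smaller_falls}, applied to $\mathcal B_S \subseteq \mathcal B$, forces $d_{\mathcal B_S}(B) \ne B$; applying the sharpening to $\mathcal B_S$ then yields $d_{\mathcal B_S}(B) = B - a = d_\mathcal B(B)$, contradicting the definition of $Z$. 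Hence $d_\mathcal B(B) = B$, so $B \in Y$.

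The main obstacle is the sharpening of Lemma \ref{lem_root_fall}; once this is in hand, the rest is a short chain of implications.
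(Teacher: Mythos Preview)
Your proof is correct and takes a genuinely different route from the paper's. You establish the stronger inclusion $Z(\mathcal B,\mathcal B_S,\mathcal B_T)\subseteq Y(\mathcal B)$, whereas the paper only proves the cardinality inequality. Your key new ingredient is the sharpening of Lemma~\ref{lem_root_fall}: whenever $d_{\mathcal B}(B)\neq B$, the element removed is exactly $\max R_{\mathcal B}(B)$. This follows cleanly from the proofs of Lemmas~\ref{lem_uc_image} and~\ref{lem_cube_set} as you indicate, and since those lemmas are stated for an arbitrary simply rooted family they apply equally well to $\mathcal B_S$. Combined with $R_{\mathcal B_S}(B)=R_{\mathcal B}(B)\cap S$ and the contrapositive of Lemma~\ref{lem_smaller_falls}, this pins down $d_{\mathcal B_S}(B)=d_{\mathcal B}(B)$ whenever $\max R_{\mathcal B}(B)\in S$, contradicting membership in $Z$.

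By contrast, the paper does not observe this sharpening; it only knows $d_{\mathcal B}(B)\in\{B\}\cup\{B-r:r\in R_{\mathcal B}(B)\}$ and so cannot rule out $d_{\mathcal B}(B)\neq B$ for $B\in Z$. It therefore partitions $\mathcal P(n)$ into Reimer's cubes $[A,u_{\mathcal A}(A)]$ and, in each cube $C$ of dimension $r\ge 3$, performs a careful count to show $|C\cap Y|=2^r-2r$ while $|C\cap Z|\le 2^r-1-r-\binom{r}{2}+\lfloor r/2\rfloor$. Your argument bypasses all of this counting; in particular it renders the ``$|R_{\mathcal B}(B)|\ge 3$'' clause of Corollary~\ref{cor_Z_roots} vacuous, and shows that the case $B\in (C\cap Z)\setminus Y$ treated at length in the paper's proof never actually occurs.
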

\begin{proof}
We write $Z= Z(\mathcal B, \mathcal B_S, \mathcal B_T)$, and $Y=Y(\mathcal B)$. Let $\mathcal A$ be $\mathcal P(n)\setminus \mathcal B$; $\mathcal A$ is a union-closed family, since $\mathcal B$ is simply rooted. If a set $B\in Z$ is not in a cube $[A,u_{\mathcal A}(A)]$ for some $A\in \mathcal A$, then $B$ is also in $Y$. Indeed, $\delta B\subseteq \mathcal B$ because all sets in $Z$ are $\mathcal B$-rooted at two distinct elements of $[n]$. $d_{\mathcal B}(B)=B$ follows from Lemma \ref{lem_uc_image}; otherwise we must have $B=U_{(\mathcal A,k)}(A)$ for some $A\in \mathcal A$ and $1\le k \le n$, so $B\in [A,u_{\mathcal A}(A)]$, a contradiction. Hence it is enough to show that for every cube $C=[A,u_{\mathcal A}(A)]$,
\[
|C\cap Y| \ge |C\cap Z|,
\]
since by Theorem \ref{lem_Rei} these cubes are disjoint for different $A$. We shall now show this for the cube $C$. If $C\cap Z\subseteq C\cap Y$, we are done. Otherwise, let $B\in (C\cap Z) \setminus Y$. Since $B\in Z$, $B$ has at least two roots in $\mathcal B$ by Corollary \ref{cor_Z_roots}, so $\delta B\subseteq \mathcal B$. Then since $B\notin Y$, $d_{\mathcal B}(B)\neq B$, and $B$ has at least $3$ roots in $\mathcal B$ by Corollary \ref{cor_Z_roots}. Hence $\delta_2(B)\subseteq \mathcal B$, and so $|u_{\mathcal A}(A)\setminus A| \ge |B\setminus A| \ge 3$. We define $r=|u_{\mathcal A}(A)\setminus A|$.

We now count the sets of $C\setminus Y$. Note that by Lemma \ref{lem_Rei}, $C$ contains no set of $\mathcal A$ other than $A$. In $C$, there are $r+1$ sets which are $U_{(\mathcal A,k)}(A)$ for some $0\le k \le n$, one of size $i$ for each $i$ with $|A|\le i \le |u_{\mathcal A}(A)|$. All other sets $B\in C$ are in $\mathcal B$, and have $d_{\mathcal B}(B)=B$. Also, every set $B$ in $C$ of size at least $|A|+2$ has $\delta B\subseteq \mathcal B$. Indeed, if $i\in B\cap A$, $B - i \in C\setminus\{A\}\subseteq \mathcal B$. If $i\in B\setminus A$, then $(B-i)\cup A = B$, and $\mathcal A$ is union-closed, so $B-i\in \mathcal B$.

Hence $|C\cap Y| = 2^r - 2r$ --- the elements of $C\setminus Y$ are precisely the $r+1$ sets of $C$ of size $|A|$ or $|A|+1$, together with one set of size $i$ for each $i$ with $|A|+2\le i \le |A|+r$. To bound $|C\cap Z|$, we note that $A$ is not in $Z$, and nor is $A+i$ for any $i\in (u_{\mathcal A}(A)\setminus A)$, since $A+i$ has only one $\mathcal B$-root. Also, if $A+i+j$ is in $Z$, for $i\neq j$ both in $u_{\mathcal A}(A)\setminus A$, then since $A+i+j$ is not $\mathcal B$-rooted at any element in $A$ by Corollary \ref{cor_Z_roots} we must have $\{d_{\mathcal B}(A+i+j),d_{\mathcal B_S}(A+i+j), d_{\mathcal B_T}(A+i+j)\} = \{A+i+j,A+i,A+j\}$.

However, we must have $d_{\mathcal B}(A+i+j) = A+i+j$; otherwise by Lemma \ref{lem_smaller_falls} $d_{\mathcal B_S}(A+i+j)\neq A+i+j \neq d_{\mathcal B_T}(A+i+j)$, a contradiction. So $\{d_{\mathcal B_S}(A+i+j), d_{\mathcal B_T}(A+i+j)\} = \{A+i,A+j\}$. Without loss of generality, $d_{\mathcal B_S}(A+i+j)=A+i$. Then by Lemma \ref{lem_root_fall} we have $j\in R_{\mathcal B_S}(A+i+j)$, and so $j \in S$. Similarly, $i\in T$.

Now, suppose $A+i+k\in Z$ for some $k\neq j$. Then, as before, $\{d_{\mathcal B_S}(A+i+k), d_{\mathcal B_T}(A+i+k)\} = \{A+i,A+k\}$, and since $i\in T$ we must have $k\in S$ and $d_{\mathcal B_S}(A+i+k)=A+i$, contradicting the injectivity of $d_{\mathcal B_S}$. Hence each element of $u_{\mathcal A}(A)\setminus A$ appears in at most one of size $|A|+2$ in $C\cap Z$. So $C\cap Z$ does not contain $A$, nor any of the $r$ sets of size $|A|+1$ in $C$, and contains at most $\lfloor r/2\rfloor$ of the $\binom{r}{2}$ sets of size $|A|+2$ in $C$. So the total number of sets in $C\cap Z$ is at most $2^r-1-r-\binom{r}{2} +\lfloor r/2\rfloor$. It is easy to see that for $r\ge 3$ this is at most $2^r - 2r$, with equality when $r=3$. Hence $|Y|\ge |Z|$, as required.\end{proof}
\noindent
Combining Lemmas \ref{lem_many_bad_2} and \ref{lem_Y_ge_Z}, we get the following lemma.
\begin{lemma}\label{lem_refinement}
Let $(S,T)$ be a partition of $[n]$ into two disjoint sets. Also, let $\mathcal B$ be a simply rooted family in $\mathcal P(n)$. Let $b_1$ be the number of sets $B\in \mathcal B\setminus (\mathcal B_S\cap\mathcal B_T)$ with $\delta B\subseteq \mathcal B$, let $b_2 = |\mathcal B_S \cap \mathcal B_T|$, and let $b_3$ be the number of sets $B\in \mathcal B$ with $d_{\mathcal B}(B)=B$. Then
\[
 2^{-n}|\mathcal B_S||\mathcal B_T|\le b_1 + b_2 + b_3.
\]
\hfill\qed
\end{lemma}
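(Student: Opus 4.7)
The plan is immediate from the two preceding lemmas. Lemma \ref{lem_many_bad_2} already gives the stronger-looking bound
\[
2^{-n}|\mathcal B_S||\mathcal B_T|\le b_1 + b_2 + b_3 + |Z(\mathcal B, \mathcal B_S, \mathcal B_T)| - |Y(\mathcal B)|,
\]
so it suffices to argue that the correction term $|Z(\mathcal B, \mathcal B_S, \mathcal B_T)| - |Y(\mathcal B)|$ is non-positive. But this is precisely the content of Lemma \ref{lem_Y_ge_Z}, which tells us $|Y(\mathcal B)|\ge |Z(\mathcal B,\mathcal B_S,\mathcal B_T)|$.

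Therefore the entire proof consists of applying Lemma \ref{lem_many_bad_2} to the partition $(S,T)$ and the family $\mathcal B$, and then invoking Lemma \ref{lem_Y_ge_Z} to drop the $|Z|-|Y|$ term. No further computation is needed; in particular one does not need to revisit the Harris-correlation step or the analysis of the cubes $[A, u_{\mathcal A}(A)]$, since all of that work has already been packaged into the two previous lemmas.

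There is no real obstacle here: the lemma is essentially a restatement of Lemma \ref{lem_many_bad_2} in which the previously floating error term has been controlled by Lemma \ref{lem_Y_ge_Z}. The only thing to double-check is that the hypotheses of both lemmas are the same as those of Lemma \ref{lem_refinement} (a simply rooted $\mathcal B$ together with a partition $(S,T)$ of $[n]$), which they are. So the proof will be a single short paragraph chaining the two inequalities.
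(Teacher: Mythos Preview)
Your proposal is correct and matches the paper's own proof exactly: the paper simply states that Lemma~\ref{lem_refinement} follows by combining Lemmas~\ref{lem_many_bad_2} and~\ref{lem_Y_ge_Z}, which is precisely what you do.
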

This result is a stronger version Lemma \ref{lem_many_bad}, and using it instead of that lemma improves the constant in Theorem \ref{thm_stability}, giving the following result.
\begin{theorem}\label{thm_stability_2}
Let $\mathcal B$ be a simply rooted family in $\mathcal P(n)$ with $|\mathcal B| = m$, and let $p\in[0,1]$. Suppose that no $i \in [n]$ has $|\mathcal B_{\{i\}}| \ge pm$. Then
\[
||\mathcal B|| \le ||\mathcal I(m)|| + m - m^2(1/8 - p^2/8)/2^n.
\]
\end{theorem}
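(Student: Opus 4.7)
The plan is to mirror the proof of Theorem \ref{thm_stability} from Section \ref{sec_pf_thm}, substituting Lemma \ref{lem_refinement} for Lemma \ref{lem_many_bad}. Since all of the heavy lifting required for the improvement has already been carried out in Lemmas \ref{lem_many_bad_2} and \ref{lem_Y_ge_Z}, the argument is essentially a reorganisation of the original proof with a cleaner case-split constant.

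Concretely, given a simply rooted $\mathcal B \subseteq \mathcal P(n)$ with $|\mathcal B| = m$ and $|\mathcal B_{\{i\}}| < pm$ for every $i$, I would first invoke Lemma \ref{lem_large_product} to produce a partition $[n] = S \cup T$ with
\[
|\mathcal B_S||\mathcal B_T| \ge m^2(1/4 - p^2/4).
\]
Defining $b_1$, $b_2$ and $b_3$ exactly as in Lemma \ref{lem_refinement}, that lemma then yields
\[
b_1 + b_2 + b_3 \ge m^2(1/4 - p^2/4)/2^n,
\]
so at least one of $b_1 + b_2$ and $b_3$ is at least $m^2(1/8 - p^2/8)/2^n$.

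In the first case I would observe that every set in $\mathcal B_S \cap \mathcal B_T$ is $\mathcal B$-rooted at two distinct elements of $[n]$, and so satisfies $\delta B \subseteq \mathcal B$; therefore $b_1 + b_2$ counts exactly the sets $B \in \mathcal B$ with $\delta B \subseteq \mathcal B$, and Lemma \ref{lem_full_sh} delivers the required inequality
\[
||\mathcal B|| \le ||\mathcal I(m)|| + m - m^2(1/8 - p^2/8)/2^n.
\]
In the second case, Lemma \ref{lem_no_falls} applied to the $b_3$ sets fixed by $d_{\mathcal B}$ gives exactly the same conclusion.

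The main conceptual point — and the only place where the new proof differs materially from the proof of Theorem \ref{thm_stability} — is that Lemma \ref{lem_refinement} bounds $b_1 + b_2 + b_3$ rather than $b_1 + 2b_2 + b_3$, so the two-way case-split now loses only a factor of $2$ instead of a factor of $3$. Since the genuinely new input (the removal of the overcount of $\mathcal B_S \cap \mathcal B_T$ via Lemmas \ref{lem_many_bad_2} and \ref{lem_Y_ge_Z}) has already been established, no further obstacle arises.
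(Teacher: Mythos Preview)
Your proposal is correct and follows essentially the same route as the paper's proof: invoke Lemma \ref{lem_large_product}, apply Lemma \ref{lem_refinement} to get $b_1+b_2+b_3 \ge m^2(1/4 - p^2/4)/2^n$, split into the two cases $b_1+b_2$ large or $b_3$ large, and finish with Lemma \ref{lem_full_sh} or Lemma \ref{lem_no_falls} respectively. (In fact your version is slightly cleaner than the paper's, which contains a typographical slip writing $m^2(1/2-p^2/4)$ for the output of Lemma \ref{lem_large_product}.)
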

\noindent
\begin{proof}
Indeed, by Lemma \ref{lem_large_product} we can choose a partition $[n]=S\cup T$ so that $|\mathcal B_S||\mathcal B_T|\ge m^2(1/2-p^2/4)$. Then we have
\[
2^{-n}m^2(1/2-p^2/4)\le 2^{-n}|\mathcal B_S||\mathcal B_T|\le b_1+b_2+b_3,
\]
and so either $b_1+b_2$ or $b_3$ is at least $m^2(1/8-p^2/8)/2^n$. Applying Lemma \ref{lem_full_sh} in the first case or Lemma \ref{lem_no_falls} in the second, we get Theorem \ref{thm_stability_2}.\end{proof}

This in turn improves the constants in Theorem \ref{thm_main} and Corollary \ref{cor_main}. We also note another minor change to the proof of Theorem \ref{thm_main} --- at the end of the proof, we use the fact that a counterexample to the union-closed conjecture in $\mathcal P(n)$ has fewer than $\frac{2}{3}2^n$ elements. Since we now have a better bound, we can use this instead to improve the argument slightly. Applying these improvements together improves our bound in Theorem \ref{thm_main} to $c_1\ge 0.04218\ldots > 1/24$ and in Corollary \ref{cor_main} to $c_2 \ge 0.009646\ldots > 1/104$ --- that is, the union-closed conjecture holds for families in $\mathcal P(n)$ with at least $(\frac{2}{3}-\frac{1}{104})2^n$ elements.

\section{Further Work}
Theorem \ref{thm_stability_2} is a stability result for the total sizes of simply rooted families, which in turn provides a stability result for the union-closed size problem in the case of large union-closed families; if $\mathcal A\subseteq \mathcal P(n)$ is union-closed, with $|\mathcal A|\ge 2^{n-1}$ and $||\mathcal A||$ is close to the minimum possible, then $\mathcal P(n)\setminus \mathcal A$ has an element of high degree. However, we have no stability result for the union-closed size problem in general. It was proved in \cite{BaBoEc} that there is a unique uinon-closed family $\mathcal F_m$ with $|\mathcal F_m|=m$ and $||\mathcal F_m|| = f(m)$, but if $\mathcal A$ is a union-closed family of $m$ sets with $||\mathcal A||$ close to $||\mathcal F_m||$ in a large powerset, we have no result (or even conjecture) which states that $\mathcal A$ is in some sense similar to $\mathcal F_m$.

Another direction would be to improve our stability results for the sizes of simply rooted families. For example, it was conjectured in \cite{BaBoEc} that if $\mathcal B\subseteq \mathcal P(n)$ is a simply rooted family then
\begin{equation}\label{eq:con1}
||\mathcal B|| \le ||\mathcal I(m)|| + \max_{i\in[n]} \mathrm{deg}_\mathcal B(i).
\end{equation}
This remains open, but we conjecture a stronger result still; that we can replace the maximum of the degrees $d_\mathcal B(i)$ with the largest number of elements of $\mathcal B$ rooted at a single element of $[n]$:
\begin{conjecture}\label{con:max_rooted}
Let $\mathcal B$ be a simply rooted family in $\mathcal P(n)$ then
\[
||\mathcal B|| \le ||\mathcal I(m)|| + \max_{i\in[n]} |\mathcal B_{\{i\}}|.
\]
\end{conjecture}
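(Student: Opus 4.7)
The strategy is to reduce Conjecture \ref{con:max_rooted} to a combinatorial claim about the canonical down-compression $d_\mathcal B$. Since $d(\mathcal B)$ is a down-set by Part \ref{it_end_down} of Lemma \ref{lem_Rei_basics}, Lemma \ref{lem_KK} gives $||d(\mathcal B)|| \le ||\mathcal I(m)||$, and each set loses at most one element under $d_\mathcal B$ by Part \ref{it_one_fall} of Lemma \ref{lem_rooted_basics}. Exactly as in the proof of Lemma \ref{lem_no_falls}, this yields
\[
||\mathcal B|| \le ||\mathcal I(m)|| + |M|, \qquad M := \{B \in \mathcal B : d_\mathcal B(B) \neq B\},
\]
so it suffices to prove the purely combinatorial claim $|M| \le k^* := \max_{i \in [n]} |\mathcal B_{\{i\}}|$.

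By Lemma \ref{lem_root_fall}, $M$ decomposes as the disjoint union $\bigsqcup_{r \in [n]} M_r$, where $M_r := \{B \in \mathcal B : d_\mathcal B(B) = B - r\} \subseteq \mathcal B_{\{r\}}$. Individually $|M_r| \le |\mathcal B_{\{r\}}|$, but the naive sum bound $|M| \le \sum_r |\mathcal B_{\{r\}}|$ is far too weak; the content of the conjecture is precisely that the pieces $M_r$ do \emph{not} accumulate as $r$ varies. The first approach I would try is to construct an explicit injection $\varphi : M \hookrightarrow \mathcal B_{\{i^*\}}$, where $i^*$ achieves $k^*$: take $\varphi$ to be the identity on $M_{i^*}$, and for $B \in M_r$ with $r \neq i^*$ define $\varphi(B)$ by a canonical swap sending $B$ into $\mathcal B_{\{i^*\}}$, for instance a suitable element of the cube $[\{i^*\}, B \cup \{i^*\}] \cap \mathcal B$.

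An alternative route is induction on $n$. After relabelling coordinates so that $i^* = n$, one checks that $\mathcal B_{\{n\}}$ corresponds under $B \mapsto B - n$ to the largest down-set $\mathcal D_+ \subseteq \mathcal B_n^+$, so $k^* = |\mathcal D_+|$. Both $\mathcal B_n^+$ and $\mathcal B_n^-$ are simply rooted in $\mathcal P(n-1)$; applying the inductive hypothesis to $\mathcal B_n^-$, Theorem \ref{thm_down-set} to $\mathcal B_n^+$, and Lemma \ref{lem_colex_sums} to combine the two colex bounds gives the desired inequality cleanly in the case $m_+ \le m_-$.

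The main obstacle, in either approach, is the regime in which $\mathcal B$ is heavily concentrated in the fiber above the maximizing coordinate, i.e.\ $m_+ > m_-$. In the induction, the saving $-\min(m_+, m_-) = -m_-$ from Lemma \ref{lem_colex_sums} is too small to absorb both the extra $+m_+$ contribution from the $n$-coordinate and the inductive slack coming from $\mathcal B_n^-$. In the injection approach, the analogous difficulty is that the natural swap-images of sets in $M_r$ for different $r \neq i^*$ can collide in $\mathcal B_{\{i^*\}}$, so injectivity fails. Since Conjecture \ref{con:max_rooted} already strengthens the still-open inequality \eqref{eq:con1}, resolving this regime will likely require a genuinely new input beyond compression and Kruskal--Katona — perhaps a Reimer-type disjointness result comparing cubes rooted at different coordinates.
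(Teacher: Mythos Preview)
The statement you are addressing is \emph{Conjecture}~\ref{con:max_rooted}: the paper states it as an open problem in the ``Further Work'' section and gives no proof. It is explicitly presented as a strengthening of inequality~\eqref{eq:con1}, which the paper also records as open. So there is no proof in the paper to compare your proposal against.

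Your writeup is consistent with this status: it is not a proof but a discussion of two natural attack lines together with the obstacle that blocks each. The initial reduction is sound --- $||\mathcal B|| \le ||\mathcal I(m)|| + |M|$ follows exactly from Lemmas~\ref{lem_Rei_basics}, \ref{lem_rooted_basics} and \ref{lem_KK} as in Lemma~\ref{lem_no_falls}, and the decomposition $M = \bigsqcup_r M_r$ with $M_r \subseteq \mathcal B_{\{r\}}$ is a correct consequence of Lemma~\ref{lem_root_fall}. Your identification of $\{B-n : B \in \mathcal B_{\{n\}}\}$ with the largest down-set in $\mathcal B_n^+$ is also correct, and $\mathcal B_n^+, \mathcal B_n^-$ are indeed simply rooted (their complements in $\mathcal P(n-1)$ are union-closed). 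Where the inductive route stalls is not only the case $m_+ > m_-$: even when $m_+ \le m_-$, the inductive hypothesis applied to $\mathcal B_n^-$ produces a term $\max_{i\in[n-1]} |(\mathcal B_n^-)_{\{i\}}|$ which is not a priori controlled by $k^*$, so the bookkeeping does not close as cleanly as your sketch suggests.

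In short: you have correctly recognised the conjecture as open, your reduction to $|M| \le k^*$ is valid, and the obstacles you name are real. There is simply no paper proof here for comparison.
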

Even if these conjectures do not hold, it seems likely that some version of Theorem \ref{thm_stability} which does not depend on $n$ is true. To be precise, we conjecture that there are some positive constants $\epsilon$ and $\delta$ such that if $\mathcal B\subseteq \mathcal P(n)$ is a simply rooted family of $m$ sets, and $|\mathcal B_{\{i\}}|\le \epsilon m$ for all $i\in [n]$, then
\[
||\mathcal B|| \le ||\mathcal I(m)|| + m(1-\delta).
\]

\section{Acknowledgements}
The author would like to thank B\'ela Bollob\'as for his helpful comments on earlier versions of this paper.


\begin{thebibliography}{99}

\bibitem{BaBoEc}
I.~Balla, B.~Bollob\'as, T.~Eccles,
On union-closed families of sets, {\em J. Combin. Theory Ser. A}, to appear.

\bibitem{BoLe}
B.~Bollob\'as, I.~Leader,
Compressions and isoperimetric inequalities, {\em J. Combin. Theory Ser. A} {\bf 56} (1991), 47--62.

\bibitem{CzMa}
G.~Cz\'edli, M.~Mar\'oti and E.T.~Schmidt,
On the scope of averaging for Frankl's conjecture, {\em Order} {\bf 26} (2009), 31--48.

\bibitem{Duf}
D.~Duffus, in  {\em Graphs and Order}  (I.~Rival, Ed.), Dordrecht/Boston, Reidel (1985), p.525.

\bibitem{Har}
T.E.~Harris,
A lower bound for the critical probability in a certain percolation
process, {\em Math. Proc. Cambridge Philos. Soc.} {\bf 26} (1960), 13-�20.

\bibitem{Kat}
G.O.H.~Katona,
A  theorem  on  finite  sets, in {\em Theory  of  Graphs}
(P.~Erd\H{o}s  and G.O.H.~Katona, Eds.),  Akad\'emiai  Kiad\'o,  Budapest,  1968, 187--207.

\bibitem{Kru}
J.B.~Kruskal,
The number of simplices in a complex, in {\em Mathematical Optimization Techniques},
Univ. of California Press, Berkeley, 1963, 251--278.

\bibitem{Rei}
D.~Reimer, An average set size theorem,
{\em Combin. Probab. Comput.} {\bf 12} (2003),  89--93.
\end{thebibliography}
\end{document}